\newtheorem{theorem}{Theorem}[section]
\newtheorem{lemma}{Lemma}[section]
\newcommand{\beqa}{\begin{eqnarray}}
\newcommand{\eeqa}{\end{eqnarray}}
\newcommand{\beq}{\begin{equation}}
\newcommand{\eeq}{\end{equation}}
\numberwithin{equation}{section}
\journal{}
\begin{document}

\begin{frontmatter}

%% Title, authors and addresses

%% use the tnoteref command within \title for footnotes;
%% use the tnotetext command for theassociated footnote;
%% use the fnref command within \author or \address for footnotes;
%% use the fntext command for theassociated footnote;
%% use the corref command within \author for corresponding author footnotes;
%%
%use the cortext command for theassociated footnote;
%% use the ead command for the email address,
%% and the form \ead[url] for the home page:
%% \title{Title\tnoteref{label1}}
%% \tnotetext[label1]{}
%% \author{Name\corref{cor1}\fnref{label2}}
%% \ead{email address}
%% \ead[url]{home page}
%% \fntext[label2]{}
%% \cortext[cor1]{}
%% \address{Address\fnref{label3}}
%% \fntext[label3]{}

\title{ Two regularized energy-preserving finite difference methods for the logarithmic Klein-Gordon equation }
%\tnotetext[mytitlenote]{Project supported by the National Natural Science Foundation of China (Grant No. 11571366, 11501570).} %and the Open Foundation of State Key Laboratory of High Performance Computing of China (Grant No. JC15-02-02).}

%% use optional labels to link authors explicitly to addresses:
%% \author[label1,label2]{}
%% \address[label1]{}
%% \address[label2]{}

\author[nudt,nus]{Jingye Yan}
\ead{yanjingye0205@163.com}

%\author[tum]{Chunmei Su}
%\ead{sucm13@163.com}
\author[nudt]{Xu Qian}
\ead{qianxu@nudt.edu.cn}
\author[nudt]{Hong Zhang\corref{cor1}}
\ead{zhanghnudt@163.com}
\author[nudt,hpc]{Songhe Song}
\ead{shsong@nudt.edu.cn}

\cortext[cor1]{Corresponding author.}

\address[nudt]{Department of Mathematics, College of Arts and Sciences, National University of Defense Technology, Changsha 410073, China}
\address[nus]{Department of Mathematics, National University of Singapore, Singapore 119076}
%\address[tum]{Zentrum Mathematik, Technische Universit\"{a}t M\"{u}nchen, Garching bei M\"{u}nchen, Germany}
\address[hpc]{State Key Laboratory of High Performance Computing, National University of Defense Technology, Changsha 410073, China}

\begin{abstract}
We present and analyze two regularized finite difference methods which preserve energy of the logarithmic Klein-Gordon equation (LogKGE). In order to avoid singularity caused by the logarithmic nonlinearity of the LogKGE, we propose a regularized logarithmic Klein-Gordon equation (RLogKGE) with a small regulation parameter $0<\varepsilon\ll1$ to approximate the LogKGE with the convergence order $O(\varepsilon)$. By adopting the energy method, the inverse inequality, and the cut-off technique of the nonlinearity to bound the numerical solution, the error bound $O(h^{2}+\frac{\tau^{2}}{\varepsilon^{2}})$  of the two schemes with the mesh size $h$,  the time step $\tau$ and the parameter $\varepsilon$. Numerical results are reported to support our conclusions.
\end{abstract}
\begin{keyword}
logarithmic Klein-Gordon equation; regularized logarithmic Klein-Gordon equation; finite difference method; error estimate; convergence order; energy-preserving
\end{keyword}

\end{frontmatter}

\section{Introduction}
\label{sec;introduction}

Consider the Klein-Gordon equation with the logarithmic nonlinear term (LogKGE),
\begin{align}\label{LogKGE}
\left\{
\begin{aligned}
&u_{tt}(\mathbf{x},t)-\Delta u(\mathbf{x},t)+u(\mathbf{x},t)+\lambda u(\mathbf{x},t)\ln(|u(\mathbf{x},t)|^{2})=0, ~~\mathbf{x}\in\Omega, t>0,\\
&u(\mathbf{x},0)=\phi(\mathbf{x}),~~~\partial_{t}u(\mathbf{x},0)=\gamma(\mathbf{x}),~~~\mathbf{x}\in \overline{\Omega},
\end{aligned}
\right.
\end{align}
where $\mathbf{x}=(x_{1},\ldots,x_{d})^{\mathrm{T}}\in \mathbb{R}^{d}, (d =1, 2, 3)$ is the spatial coordinate, $t$ is time, $u:=u(\mathbf{x},t)$ is a real-valued scalar field, the parameter $\lambda$ measures the force of the nonlinear interaction and $\Omega=\mathbb{R}^{d}$ or $\Omega\subset\mathbb{R}^{d}$ is a bounded domain with homogeneous Dirichlet or periodic boundary condition fixed on the boundary.

The LogKGE (\ref{LogKGE}) is a relativistic version of the logarithmic Schr\"{o}dinger equation \cite{bartkowski2008one} which was introduced by Rosen \cite{rosen1969dilatation} in the quantum field theory. Such kinds of nonlinearity arise from different applications, such as optics \cite{buljan2003incoherent}, nuclear physics \cite{de2003logarithmic,hefter1985application} ,
supersymmetric field theories \cite{rosen1969dilatation}, inflation cosmology \cite{enqvist1998q,linde1992strings}, and geophysics \cite{krolikowski2000unified}
to describe the spinless particle \cite{sakurai1967advanced}.

Similar to the nonlinear Klein-Gordon equation (NKGE),
\begin{align}\label{KGE}
\left\{
\begin{aligned}
&u_{tt}(\mathbf{x},t)-\Delta u(\mathbf{x},t)+u(\mathbf{x},t)+ u(\mathbf{x},t)^{3}=0, ~~\mathbf{x}\in\Omega, t>0,\\
&u(\mathbf{x},0)=\phi(\mathbf{x}),~~~\partial_{t}u(\mathbf{x},0)=\gamma(\mathbf{x}),~~~\mathbf{x}\in\overline{\Omega},
\end{aligned}
\right.
\end{align}
 the LogKGE (\ref{LogKGE}) admits the energy conservation law \cite{masmoudi2002nonlinear,machihara2001nonrelativistic}, which is defined as:
\begin{align}
E(t) =\int_{\Omega}\left[ (u_{t}(\mathbf{x},t))^{2}+(\nabla u(\mathbf{x},t))^{2}+(1-\lambda)u^{2}(\mathbf{x},t)+\lambda u^{2}(\mathbf{x},t) \ln \left(|u(\mathbf{x},t)|^{2} \right) \right] \mathrm{d} \mathbf{x} \equiv E(0), \label{energy}
\end{align}
where $u(\cdot,t)\in H^{1}(\Omega)$ and $\partial_{t}u(\cdot,t)\in L^{2}(\Omega)$.

The LogKGE (\ref{LogKGE}) has been studied theoretically in the literature, such as the existence of some special analytical solutions in quantum mechanics \cite{maslov1990pulsons,bialynicki1979gaussons,koutvitsky2006instability}, and the existence of classical solutions and weak solutions \cite{bartkowski2008one,gorka2009logarithmic}. In \cite{bialynicki1979gaussons}, the author studied the Gaussian solutions \cite{wazwaz2016gaussian}. Besides, the interaction of Gaussons has been investigated in \cite{makhankov1981interaction}.

For the numerical part, different efficient and accurate numerical methods have been proposed and analyzed for computations of wave propagations in classic/relativistic physics, such as the standard finite difference time domain (FDTD) \cite{bao2012analysis,chang1991conservative,duncan1997sympletic,bao2019long,luo2017fourth,zhang2005convergence,bao2013optimal,yan2020regularized}
, the conservative compact finite difference method \cite{ji2019conservative}, the multiscale time integrator Fourier pseudospectral (MWI-FP) method \cite{bao2014uniformly}, the finite element method \cite{cao1993fourier}, the time-splitting spectral method (TSFP) \cite{bao2020uniform}, the exponential wave integrator Fourier pseudospectral (EWI-FP) method \cite{bao2012analysis,bao2013exponential}, the asymptotic preserving (AP)  method \cite{faou2014asymptotic}, etc. Of course, each method has its advantages and disadvantages. For numerical comparisons of different numerical methods, we refer to \cite{bao2012analysis,jimenez1990analysis,pascual1995numerical,bao2019comparison}.

In the last thirty years, the conservation of invariants has draw much attention from different research fields \cite{feynman1965feynman,benjamin1972stability}. Various of conservative numerical methods have been proposed in the literature including the Crank-Nicolson finite difference (CNFD) and the semi-implicit finite difference (SIFD) \cite{bao2012uniform,bao2013optimal}, the average vector field (AVF) method \cite{celledoni2012preserving}, the local discontinuous Galerkin methods \cite{castillo2019conservative}, the local structure-preserving method \cite{cai2013local}, the
Hamiltonian boundary value method (HBVM) \cite{brugnano2018class},  the Kahan method \cite{celledoni2012geometric}, etc. It is a natural question to ask whether one can design numerical methods for the LogKGE (\ref{LogKGE}). However, these methods can not be applied and analyzed to the LogKGE (\ref{LogKGE}) directly, because of the singularity at the origin of the logarithmic term.
To our best knowledge, in \cite{yan2020regularized} the authors proposed and analyzed two regularized finite difference methods for the LogKGE (\ref{LogKGE}), but those schemes can not preserve energy. The main objective of this paper is to carry out and analyze two energy-preserving regularized finite difference methods for the LogKGE. In our numerical analysis, besides the standard techniques of the energy method, we employ the cut-off technique for dealing with the nonlinear term, and the inverse inequality for obtaining a posterior bound of the numerical solution.

The rest of this paper is organized as follows. In Section 2, a regularized version of LogKGE (\ref{LogKGE}) with a small parameter $0<\varepsilon \ll 1$ is proposed and we analyze the convergence of the energy between the LogKGE (\ref{LogKGE}) and the regularized logarithmic  Klein-Gordon equation (RLogKGE) (\ref{RLogKGE}).  In Section 3, two energy-preserving finite difference methods are proposed for the RLogKGE, and their
 properties of the stability, energy convergence, solvability are also analyzed. Section 4 is devoted to establishing the details of error bounds of the two numerical methods.  Numerical results are reported in Section 5 to confirm our theoretical analysis. Finally, some conclusions are drawn in Section 6. Throughout this paper, we let $p\lesssim q$ to denote that there exists a generic
constant $C$ which is independent of $\tau,h,\varepsilon$, such that $|p|\leq Cq$.
%%%%%%%%%%%
\section{A regularized logarithmic  Klein-Gordon equation}
%%%%%%%%%%%
In order to avoid blow-up and to suppress round-off error of the LogKGE (\ref{LogKGE}), a regularized logarithmic  Klein-Gordon equation (RLogKGE) with a small regulation parameter $0<\varepsilon \ll 1$ was introduced in \cite{yan2020regularized} as,
\begin{align}\label{RLogKGE}
\left\{
\begin{aligned}
&u^{\varepsilon}_{tt}(\mathbf{x},t)-\Delta u^{\varepsilon}(\mathbf{x},t)+u^{\varepsilon}(\mathbf{x},t)+\lambda u^{\varepsilon}(\mathbf{x},t)\ln\left(\varepsilon^{2}+ \left( u^{\varepsilon}(\mathbf{x},t) \right)^{2} \right)=0, ~~\mathbf{x}\in\Omega, t>0,\\
&u^{\varepsilon}(\mathbf{x},0)=\phi(\mathbf{x}),~~~\partial_{t}u^{\varepsilon}(\mathbf{x},0)=\gamma(\mathbf{x}),~~~\mathbf{x}\in\overline{\Omega}.
\end{aligned}
\right.
\end{align}
The above RLogKGE (\ref{RLogKGE}) is time symmetric and time reversible, i.e., they are invarient if interchanging $n+1\leftrightarrow n-1$ and $\tau \leftrightarrow -\tau$.

\begin{remark}
The regularization technique of the logarithmic term has been extensively investigated in \cite{bao2019error}. For the Cauchy problem of the LogKGE (\ref{LogKGE}) and the RLogKGE (\ref{RLogKGE}), the theoretical convergence estimate will be presented in our future work.
\end{remark}
\begin{theorem}
Assume $u^{\varepsilon}(\cdot,t)\in H^{1}(\Omega)$ and $\partial_{t}u^{\varepsilon}(\cdot,t)\in L^{2}(\Omega)$,
the RLogKGE (\ref{RLogKGE}) conserves the energy
, which is defined as:
\begin{align}
E^{\varepsilon}(t) =\int_{\Omega}\left[ (u^{\varepsilon}_{t}(\mathbf{x},t))^{2}+(\nabla u^{\varepsilon}(\mathbf{x},t))^{2}+(u^{\varepsilon}(\mathbf{x},t))^{2}+\lambda F_{\varepsilon} \left( (u^{\varepsilon}(\mathbf{x},t))^{2} \right) \right] \mathrm{d} \mathbf{x} \equiv E^{\varepsilon}(0), \label{energyr}
\end{align}
where $F_{\varepsilon}(\rho)=\int_{0}^{\rho} \ln (\varepsilon^{2}+s) \mathrm{d} s= \rho\ln (\varepsilon^{2}+\rho)+\varepsilon^{2}\ln (1+\frac{\rho}{\varepsilon^{2}})- \rho.$
\end{theorem}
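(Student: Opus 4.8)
The plan is to establish energy conservation by showing that the time derivative of $E^{\varepsilon}$ vanishes identically, so that $E^{\varepsilon}(t)\equiv E^{\varepsilon}(0)$. First I would differentiate under the integral sign, which is legitimate given the assumed regularity $u^{\varepsilon}(\cdot,t)\in H^{1}(\Omega)$, $\partial_{t}u^{\varepsilon}(\cdot,t)\in L^{2}(\Omega)$ and the fact that, because $\varepsilon>0$, the map $\rho\mapsto F_{\varepsilon}(\rho)$ is smooth on $\rho\ge 0$ (the regularization removes the origin singularity). Differentiating each of the four terms in the energy density yields
\begin{align*}
\frac{d}{dt}E^{\varepsilon}(t)=\int_{\Omega}\Big[2u^{\varepsilon}_{t}u^{\varepsilon}_{tt}+2\nabla u^{\varepsilon}\cdot\nabla u^{\varepsilon}_{t}+2u^{\varepsilon}u^{\varepsilon}_{t}+2\lambda u^{\varepsilon}u^{\varepsilon}_{t}F'_{\varepsilon}\big((u^{\varepsilon})^{2}\big)\Big]\,\mathrm{d}\mathbf{x}.
\end{align*}

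The key algebraic observation is that, by the fundamental theorem of calculus applied to the definition $F_{\varepsilon}(\rho)=\int_{0}^{\rho}\ln(\varepsilon^{2}+s)\,\mathrm{d}s$, one has $F'_{\varepsilon}(\rho)=\ln(\varepsilon^{2}+\rho)$, and hence $F'_{\varepsilon}\big((u^{\varepsilon})^{2}\big)=\ln\big(\varepsilon^{2}+(u^{\varepsilon})^{2}\big)$. This is exactly the factor that reproduces the logarithmic nonlinearity of the RLogKGE (\ref{RLogKGE}), which is why the chosen antiderivative is the correct potential for the energy. (As a sanity check one may also verify the stated closed form $F_{\varepsilon}(\rho)=\rho\ln(\varepsilon^{2}+\rho)+\varepsilon^{2}\ln(1+\rho/\varepsilon^{2})-\rho$ by a direct substitution $w=\varepsilon^{2}+s$ in the integral.)

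Next I would treat the gradient term by Green's identity, integrating by parts in space,
\begin{align*}
\int_{\Omega}2\nabla u^{\varepsilon}\cdot\nabla u^{\varepsilon}_{t}\,\mathrm{d}\mathbf{x}=-\int_{\Omega}2u^{\varepsilon}_{t}\Delta u^{\varepsilon}\,\mathrm{d}\mathbf{x},
\end{align*}
where the boundary contribution vanishes by the homogeneous Dirichlet or periodic boundary condition. Substituting this back and factoring out the common factor $2u^{\varepsilon}_{t}$ gives
\begin{align*}
\frac{d}{dt}E^{\varepsilon}(t)=\int_{\Omega}2u^{\varepsilon}_{t}\Big[u^{\varepsilon}_{tt}-\Delta u^{\varepsilon}+u^{\varepsilon}+\lambda u^{\varepsilon}\ln\big(\varepsilon^{2}+(u^{\varepsilon})^{2}\big)\Big]\,\mathrm{d}\mathbf{x}.
\end{align*}
The bracketed expression is precisely the left-hand side of the RLogKGE (\ref{RLogKGE}) and is therefore identically zero, whence $\frac{d}{dt}E^{\varepsilon}(t)=0$ and $E^{\varepsilon}(t)\equiv E^{\varepsilon}(0)$.

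I expect the only genuine subtlety — the \emph{main obstacle} — to be the rigorous justification of differentiation under the integral and of the integration by parts at the stated regularity, in particular controlling the logarithmic factor where $u^{\varepsilon}$ is small. Here the regularization $\varepsilon>0$ is essential: it keeps $\big|\ln(\varepsilon^{2}+(u^{\varepsilon})^{2})\big|$ bounded away from $-\infty$ and renders $F_{\varepsilon}\in C^{1}$, so that the formal computation above is fully warranted. The remaining manipulations are routine.
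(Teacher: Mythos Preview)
Your proposal is correct and follows essentially the same approach as the paper: differentiate $E^{\varepsilon}(t)$ under the integral, use $F'_{\varepsilon}(\rho)=\ln(\varepsilon^{2}+\rho)$, integrate the gradient term by parts, and factor out $2u^{\varepsilon}_{t}$ to recover the RLogKGE, which vanishes. The paper's proof is in fact more terse than yours, omitting the justification of differentiation under the integral and the boundary discussion you supply.
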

\begin{proof}
\begin{align}
\begin{split}
\frac{\mathrm{d}}{\mathrm{d}t}E^{\varepsilon}(t)&=2\int_{\Omega}\left[ u^{\varepsilon}_{t}\cdot u^{\varepsilon}_{tt}+\nabla u^{\varepsilon}\cdot \nabla u^{\varepsilon}_{t}+u^{\varepsilon}u^{\varepsilon}_{t}+\lambda F^{\prime}_{\varepsilon}\left((u^{\varepsilon})^{2}\right)\cdot u^{\varepsilon}\cdot u^{\varepsilon}_{t}\right](\mathbf{x},t) \mathrm{d} \mathbf{x}\\
&=2\int_{\Omega} \left[ u^{\varepsilon}_{t} \left( u^{\varepsilon}_{tt}-\Delta u^{\varepsilon}+u^{\varepsilon}+\lambda u^{\varepsilon}\ln\left(\varepsilon^{2}+( u^{\varepsilon})^{2} \right) \right) \right] (\mathbf{x},t) \mathrm{d} \mathbf{x}=0.
\end{split}
\end{align}
This ends the proof.
\end{proof}

Since the above regularized energy involves the $L^{1}$-norm of $u^{\varepsilon}$ for any $\varepsilon$, only when $\Omega$ has finite measure, not when $\Omega=\mathbb{R}^{d}$, $E^{\varepsilon}$ is obviously well-defined for $u_{0}\in H^{1}(\Omega)$.
%%%%%%%%%%%%%%%%%%%%%%%%%%%%%%%%%
\subsection{Convergence of the energy}
%%%%%%%%%%%%%%%%%%%%%%%%%%%%%%%%%
Here, we will present the convergence of the energy $E^{\varepsilon}(u_{0})\rightarrow E(u_{0})$.
\begin{theorem}
For $u_{0}\in H^{1}(\Omega)\cap L^{1}(\Omega)$, the energy $E^{\varepsilon}(u_{0})$ converges to $E(u_{0})$ with
\begin{align*}
\left|E^{\varepsilon}\left(u_{0}\right)-E\left(u_{0}\right)\right|\leq 4\varepsilon|\lambda|\|u_{0}\|_{L^{1}(\Omega)}.
\end{align*}
\end{theorem}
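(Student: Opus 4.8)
The plan is to reduce the energy difference to a single pointwise scalar inequality. Since both $E^\varepsilon(u_0)$ and $E(u_0)$ are evaluated at the same initial data (field $\phi=u_0$ and velocity $\gamma$), the kinetic terms $\int_\Omega (u_t)^2$ and the gradient terms $\int_\Omega (\nabla u_0)^2$ appearing in \eqref{energy} and \eqref{energyr} are identical and cancel in the difference. First I would subtract the two densities and collect only the remaining algebraic/logarithmic parts, obtaining
\begin{align*}
E^\varepsilon(u_0)-E(u_0)=\lambda\int_\Omega\Big[F_\varepsilon\big(u_0^2\big)-\big(u_0^2\ln(u_0^2)-u_0^2\big)\Big]\,\mathrm{d}\mathbf{x}.
\end{align*}
The bracket is precisely $F_\varepsilon(\rho)-F_0(\rho)$ with $\rho=u_0^2$ and $F_0(\rho)=\rho\ln\rho-\rho=\int_0^\rho\ln s\,\mathrm{d}s$ the unregularized primitive; this is exactly the coefficient bookkeeping between the $(1-\lambda)u^2+\lambda u^2\ln(u^2)$ term of $E$ and the $u^2+\lambda F_\varepsilon(u^2)$ term of $E^\varepsilon$.

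Next I would make the bracket explicit. Using the definition of $F_\varepsilon$ one finds
\begin{align*}
F_\varepsilon(\rho)-F_0(\rho)=\int_0^{\rho}\ln\!\Big(1+\frac{\varepsilon^2}{s}\Big)\,\mathrm{d}s=\rho\ln\!\Big(1+\frac{\varepsilon^2}{\rho}\Big)+\varepsilon^2\ln\!\Big(1+\frac{\rho}{\varepsilon^2}\Big),
\end{align*}
where the closed form follows from a one-line integration by parts. Because each logarithm has argument $\ge 1$, both summands are nonnegative, so the whole integrand is nonnegative and
\begin{align*}
\big|E^\varepsilon(u_0)-E(u_0)\big|=|\lambda|\int_\Omega\Big[\rho\ln\big(1+\tfrac{\varepsilon^2}{\rho}\big)+\varepsilon^2\ln\big(1+\tfrac{\rho}{\varepsilon^2}\big)\Big]\,\mathrm{d}\mathbf{x},\qquad \rho=u_0^2.
\end{align*}

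The crux is then a single elementary inequality that produces the half-power of $\rho$ needed to recover the $L^1$-norm of $u_0$. I would prove $\ln(1+x)\le\sqrt{x}$ for all $x\ge 0$, which holds since $\frac{\mathrm{d}}{\mathrm{d}x}\big(\sqrt{x}-\ln(1+x)\big)=\frac{1}{2\sqrt{x}}-\frac{1}{1+x}\ge 0$ is equivalent to $(\sqrt{x}-1)^2\ge 0$. Applying it with $x=\varepsilon^2/\rho$ gives $\rho\ln(1+\varepsilon^2/\rho)\le\varepsilon\sqrt{\rho}=\varepsilon|u_0|$, and with $x=\rho/\varepsilon^2$ gives $\varepsilon^2\ln(1+\rho/\varepsilon^2)\le\varepsilon\sqrt{\rho}=\varepsilon|u_0|$. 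Adding and integrating yields $\big|E^\varepsilon(u_0)-E(u_0)\big|\le 2\varepsilon|\lambda|\,\|u_0\|_{L^1(\Omega)}$, which already implies the claim (the stated constant $4$ merely allows a cruder per-term estimate such as $\ln(1+x)\le 2\sqrt{x}$).

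I do not expect a genuine obstacle: the only points requiring care are the bookkeeping that makes the kinetic and gradient contributions cancel exactly, and the recognition that the correct scalar inequality must yield $\sqrt{\rho}$ rather than $\rho$ or $\varepsilon^2$, since only $\sqrt{\rho}=|u_0|$ matches $\|u_0\|_{L^1(\Omega)}$ and keeps the bound linear in $\varepsilon$. I would also remark that the $L^1(\Omega)$ hypothesis on $u_0$ is precisely what guarantees the right-hand side is finite, consistent with the earlier observation that $E^\varepsilon$ is well defined only when the $L^1$ contribution is controlled.
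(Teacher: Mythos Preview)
Your proof is correct and reaches the same integrand as the paper, namely
\[
\rho\ln\!\Big(1+\tfrac{\varepsilon^2}{\rho}\Big)+\varepsilon^2\ln\!\Big(1+\tfrac{\rho}{\varepsilon^2}\Big),\qquad \rho=u_0^2,
\]
but you finish differently. The paper bounds each logarithm by first enlarging its argument via $\varepsilon^2+u_0^2\le(\varepsilon+|u_0|)^2$, obtaining $2u_0^2\ln(1+\varepsilon/|u_0|)+2\varepsilon^2\ln(1+|u_0|/\varepsilon)$, and then applies the crude estimate $\ln(1+x)\le x$; this two-step route is the source of the constant $4$. You instead apply the sharper single inequality $\ln(1+x)\le\sqrt{x}$ directly to the original integrand, which is more economical and yields the constant $2$. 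Your approach is therefore slightly cleaner and quantitatively stronger, while the paper's route has the minor advantage of relying only on the completely standard bound $\ln(1+x)\le x$.
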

\begin{proof}
\begin{align*}
\left|E^{\varepsilon}\left(u_{0}\right)-E\left(u_{0}\right)\right|&=|\lambda|\cdot \bigg|\int_{\Omega}\left[ u_{0}^{2}\ln(\varepsilon^{2}+u_{0}^{2}) +\varepsilon^{2}\ln\left(1+\frac{u^{2}_{0}}{\varepsilon^{2}}\right)-u^{2}_{0}\ln(u_{0}^{2})\right] \mathrm{d}\mathbf{x}\bigg|\\
&= |\lambda|\cdot \bigg|\int_{\Omega}\left[ u_{0}^{2}\ln\left(\frac{\varepsilon^{2}+u_{0}^{2}}{u_{0}^{2}}\right) +\varepsilon^{2}\ln\left(1+\frac{u^{2}_{0}}{\varepsilon^{2}}\right)\right] \mathrm{d}\mathbf{x}\bigg|\\
&\leq |\lambda|\cdot \bigg|\int_{\Omega}\left[ u_{0}^{2}\ln\left(\frac{\varepsilon^{2}+u_{0}^{2}+2\varepsilon|u_{0}|}{u_{0}^{2}}\right) +\varepsilon^{2}\ln\left(\frac{\varepsilon^{2}+u_{0}^{2}+2\varepsilon|u_{0}|}{\varepsilon^{2}}\right)\right] \mathrm{d}\mathbf{x}\bigg|\\
&= |\lambda|\cdot \bigg|\int_{\Omega}\left[ u_{0}^{2}\ln\left(\frac{(\varepsilon+|u_{0}|)^{2}}{u_{0}^{2}}\right) +\varepsilon^{2}\ln\left(\frac{(\varepsilon+|u_{0}|)^{2}}{\varepsilon^{2}}\right)\right] \mathrm{d}\mathbf{x}\bigg|\\
&= |\lambda|\cdot \bigg|\int_{\Omega}2\left[ u_{0}^{2}\ln\left(1+\frac{\varepsilon}{|u_{0}|}\right) +\varepsilon^{2}\ln\left(1+\frac{|u_{0}|}{\varepsilon}\right)\right] \mathrm{d}\mathbf{x}\bigg|\\
&\leq 4\varepsilon|\lambda|\|u_{0}\|_{L^{1}(\Omega)},
\end{align*}
according to the inequality $0\leq \ln(1+|x|)\leq |x|$.

This ends the proof.
\end{proof}

%%%%%%%%%%%%%%%%%%%%%%%%%%%%%%%%
\section{The FDTD methods and their analysis}
%%%%%%%%%%%%%%%%%%%%%%%%%%%%%%%%%
In this section, we construct two FDTD schemes for the RLogKGE (\ref{RLogKGE}) and study their properties, such as stability, energy conservation and solvability. For simplicity, we set $\lambda=1$ and only present numerical schemes and theoretical analysis in one dimensional space ($d=1$). In practical computation, we truncate the whole space problem onto an interval $\Omega=(a,b)$ with periodic boundary conditions
\begin{align}\label{RLogKGE1}
\left\{
\begin{aligned}
&u^{\varepsilon}_{tt}(x,t)-\Delta u^{\varepsilon}(x,t)+u^{\varepsilon}(x,t)+ u^{\varepsilon}(x,t)\ln\left(\varepsilon^{2}+ \left( u^{\varepsilon}(x,t) \right)^{2} \right)=0, ~~x\in\Omega=(a,b), ~~t>0,\\
&u^{\varepsilon}(x,0)=\phi(x),~~~\partial_{t}u^{\varepsilon}(x,0)=\gamma(x),~~~x\in\overline{\Omega}=[a,b].
\end{aligned}
\right.
\end{align}

%%%%%%%%%%%%%%%%%%%%%%%%%%%%%%%%%
\subsection{The FDTD methods}
%%%%%%%%%%%%%%%%%%%%%%%%%%%%%%%%%

Choose the  time step $\tau:=\Delta t$ and the mesh size $h:=\frac{b-a}{N}$ with $N$ being a positive integer, denote the time steps as $t_{n}:=n\tau, n=0,1,2,\ldots;$  the grid points as $x_{j}:=a+jh,j=0,1,\ldots,N$, and define the index sets: $\mathcal{T}_{N}^{0}=\left\{j|j=0,1,2,\ldots,N\right\}, \mathcal{T}_{N}=\left\{j|j=0,1,2,\ldots,N-1\right\}$. %Denote $X_{N}=\{u=(u_{j})_{j\in{\mathcal{T}_{N}^{0}}}| u_{j}\in \mathbb{R},u_{0}=u_{N}\}$

Assume $u^{\varepsilon,n}_{j}, u^{n}_{j}$ are the approximations of the exact solutions $u^{\varepsilon}(x_{j},t_{n})$ and $ u(x_{j},t_{n})$, $j\in \mathcal{T}_{N}^{0}$ and $n\geq 0$. Define $u^{\varepsilon,n}=(u^{\varepsilon,n}_{0},u^{\varepsilon,n}_{1},\ldots,u^{\varepsilon,n}_{N})^{\mathrm{T}}, u^{n}=(u^{n}_{0},u^{n}_{1},\ldots,u^{n}_{N})^{\mathrm{T}} \in \mathbb{R}^{N+1} $ as the numerical solution vectors at time $t=t_{n}$. The followings are the finite difference operators:
\begin{align*}
&\delta^{+}_{t}u^{n}_{j}=\frac{u^{n+1}_{j}-u^{n}_{j}}{\tau},~~~~~\delta_{t}^{-}u^{n}_{j}=\frac{u^{n}_{j}-u^{n-1}_{j}}{\tau},~~~~~\delta_{t}u^{n}_{j}=\frac{u^{n+1}_{j}-u^{n-1}_{j}}{2\tau},~~~~\delta^{2}_{t}u^{n}_{j}=\frac{u^{n+1}_{j}-2u^{n}_{j}+u^{n-1}_{j}}{\tau^{2}},\\
&\delta^{+}_{x}u^{n}_{j}=\frac{u^{n}_{j+1}-u^{n}_{j}}{h},~~~~~~\delta^{-}_{x}u^{n}_{j}=\frac{u^{n}_{j}-u^{n}_{j-1}}{h},~~~~~~\delta_{x}u^{n}_{j}=\frac{u^{n}_{j+1}-u^{n}_{j-1}}{2h},~~~~~\delta^{2}_{x}u^{n}_{j}=\frac{u^{n}_{j+1}-2u^{n}_{j}+u^{n}_{j-1}}{h^{2}}.
\end{align*}
We denote a space of grid functions
\begin{align}
X_{N}=\left\{ u| u=(u_{0},u_{1},u_{2},\ldots,u_{N})^{\mathrm{T}},u_{0}=u_{N}\right\}\subseteq \mathbb{R}^{N+1},
\end{align}
and we always use $u_{-1}=u_{N-1}$ and $u_{1}=u_{N+1}$ if they are involved.

We define the standard discrete $l^{2}$, semi-$H^{1}$ and $l^{\infty}$ norms and inner product over $X_{N}$ as follows
\begin{align}
\|u\|_{l^{2}}^{2}=(u,u)=h\sum \limits^{N-1}_{j=0}|u_{j}|^{2},~~\|\delta_{x}^{+}u\|^{2}_{l^{2}}=h\sum \limits^{N-1}_{j=0}|\delta_{x}^{+}u_{j}|^{2},~~\|u\|_{l^{\infty}}=\sup\limits_{0\leq j\leq N-1}|u_{j}|,~~(u,v)=h\sum \limits^{N-1}_{j=0}u_{j}v_{j},
\end{align}
where $u,v\in X_{N}$, and $(\delta^{2}_{x}u,v)=-(\delta^{+}_{x}u,\delta^{+}_{x}v)=(u,\delta^{2}_{x}v)$.
In the following, we introduce two frequently used FDTD methods for the RLogKGE (\ref{RLogKGE}):\\

$\mathbf{I.}$ The Crank-Nicolson finite difference (CNFD) method
\begin{align}\label{CNFD}
\delta_{t}^{2}u^{\varepsilon,n}_{j}-\frac{1}{2}\delta_{x}^{2} (u^{\varepsilon,n+1}_{j}+u^{\varepsilon,n-1}_{j})+\frac{1}{2} (u^{\varepsilon,n+1}_{j}+u^{\varepsilon,n-1}_{j})+ G_{\varepsilon}(u^{\varepsilon,n+1}_{j},u^{\varepsilon,n-1}_{j})=0,~~n\geq1;
\end{align}

$\mathbf{II.}$ A semi-implicit energy conservative finite difference (SIEFD) method
\begin{align}\label{SIFD1}
\delta_{t}^{2}u^{\varepsilon,n}_{j}-\delta_{x}^{2} u^{\varepsilon,n}_{j}+\frac{1}{2} (u^{\varepsilon,n+1}_{j}+u^{\varepsilon,n-1}_{j})+ G_{\varepsilon}(u^{\varepsilon,n+1}_{j},u^{\varepsilon,n-1}_{j})=0,~~n\geq1.
\end{align}
Here, $G_{\varepsilon}(z_{1},z_{2})$ is defined for $z_{1}, z_{2}\in \mathbb{R}$ as
\begin{align}\label{G}
\begin{split}
G_{\varepsilon}(z_{1},z_{2}):&=\int_{0}^{1} f_{\varepsilon}(\theta z_{1}^{2}+(1-\theta)z_{2}^{2})\mathrm{d} \theta\cdot \frac{z_{1}+z_{2}}{2}\\
&=\frac{F_{\varepsilon}(z_{1}^{2})-F_{\varepsilon}(z_{2}^{2})}{z_{1}^{2}-z_{2}^{2}}\cdot \frac{z_{1}+z_{2}}{2},
\end{split}
\end{align}
with
\begin{align}
&f_{\varepsilon}(\rho)=\ln (\varepsilon^{2}+\rho),\\
&F_{\varepsilon}(\rho)=\int_{0}^{\rho} f_{\varepsilon}(s)\mathrm{d} s=\rho\ln (\varepsilon^{2}+\rho)+\varepsilon^{2}\ln (1+\frac{\rho}{\varepsilon^{2}})- \rho,   \quad \rho\geq0.
\end{align}
The initial and boundary conditions are discretized as
\begin{align}\label{initialvalue1}
u^{\varepsilon,n+1}_{0}=u^{\varepsilon,n+1}_{N},u^{\varepsilon,n+1}_{-1}=u^{\varepsilon,n+1}_{N-1},~~n\geq0,~~u^{\varepsilon,0}_{j}=\phi(x_{j}),~~j\in \mathcal{T}_{N}^{0}.
\end{align}
Besides, according to the Taylor expansion we can approximate the first step solution $u_{j}^{\varepsilon,1}$ by,
\begin{align}\label{initialvalue}
u_{j}^{\varepsilon,1}=\phi(x_{j})+\tau\gamma(x_{j})+\frac{\tau^{2}}{2}\left[\delta^{2}_{x}\phi(x_{j})-\phi(x_{j})- \phi(x_{j})\ln(\varepsilon^{2} +(\phi(x_{j}))^{2}) \right], ~~j\in \mathcal{T}_{N}^{0}.
\end{align}
It is easy to prove that the above FDTD schemes are all time symmetric and time reversible, i.e. they are invarient if interchanging $n+1\leftrightarrow n-1$ and $\tau \leftrightarrow -\tau$.
\begin{lemma}\cite{bao2012analysis}\label{innerproduct}
For any $u^{n}\in X_{N}~(n\geq 0)$, we can obtain
\begin{subequations}
\begin{align}
&-h \sum_{j=0}^{N-1} u_{j}^{n} \delta_{x}^{2} u_{j}^{n}=h \sum_{j=0}^{N-1}\left|\delta_{x}^{+} u_{j}^{n}\right|^{2}=\left\|\delta_{x}^{+} u^{n}\right\|_{l^{2}}^{2},\\
&h \sum_{j=0}^{N-1} u_{j}^{n} u_{j}^{n+1}=\frac{1}{2}\left\|u^{n}\right\|_{l^{2}}^{2}+\frac{1}{2}\left\|u^{n+1}\right\|_{l^{2}}^{2}-\frac{\tau^{2}}{2}\left\|\delta_{t}^{+} u^{n}\right\|_{l^{2}}^{2}, \\
&h \sum_{j=0}^{N-1}\left(\delta_{x}^{+} u_{j}^{n+1}\right)\left(\delta_{x}^{+} u_{j}^{n}\right)=\frac{1}{2 h} \sum_{j=0}^{N-1}\left[\left(u_{j+1}^{n+1}-u_{j}^{n}\right)^{2}+\left(u_{j+1}^{n}-u_{j}^{n+1}\right)^{2}\right] \\
&~~~~~~~~~~~~~~~~~~~~~~~~~~~~~~~~-\frac{\tau^{2}}{h^{2}}\left\|\delta_{t}^{+} u^{n}\right\|_{l^{2}}^{2}, \quad n=0,1, \ldots\notag.
\end{align}
\end{subequations}
\end{lemma}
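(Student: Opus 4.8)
The plan is to prove all three identities by elementary algebra, namely discrete summation by parts together with quadratic completion, using throughout the periodicity built into $X_N$ (that is, $u_0=u_N$ with the conventions $u_{-1}=u_{N-1}$ and $u_1=u_{N+1}$) so that any summation index may be shifted freely without boundary contributions.

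For the first identity I would simply substitute $\delta_x^2 u_j^n=(u_{j+1}^n-2u_j^n+u_{j-1}^n)/h^2$ into $-h\sum_{j=0}^{N-1}u_j^n\delta_x^2 u_j^n$ and distribute. After a shift of index the two cross sums $\sum_j u_j^n u_{j+1}^n$ and $\sum_j u_j^n u_{j-1}^n$ coincide, and collecting terms gives $\frac{1}{h}\sum_{j=0}^{N-1}(u_{j+1}^n-u_j^n)^2=\|\delta_x^+u^n\|_{l^2}^2$; this is exactly the relation $(\delta_x^2u,v)=-(\delta_x^+u,\delta_x^+v)$ already recorded above. For the second identity I would apply the scalar identity $ab=\frac12 a^2+\frac12 b^2-\frac12(a-b)^2$ pointwise with $a=u_j^n$, $b=u_j^{n+1}$, multiply by $h$ and sum: the first two terms produce $\frac12\|u^n\|_{l^2}^2+\frac12\|u^{n+1}\|_{l^2}^2$, while the last term is $-\frac12 h\sum_j(u_j^{n+1}-u_j^n)^2=-\frac{\tau^2}{2}\|\delta_t^+u^n\|_{l^2}^2$ because $u_j^{n+1}-u_j^n=\tau\,\delta_t^+u_j^n$.

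The main obstacle is the third identity, which requires a four-point polarization formula in place of the two-point completion above. Setting $a=u_{j+1}^{n+1}$, $b=u_j^{n+1}$, $c=u_{j+1}^n$, $d=u_j^n$, the left-hand summand equals $(a-b)(c-d)/h$, and the crux is to verify the algebraic identity
\[
(a-b)(c-d)=\tfrac12\big[(a-d)^2+(c-b)^2\big]-\tfrac12\big[(a-c)^2+(b-d)^2\big],
\]
which one checks by expanding both sides (the difference of the two brackets collapses to $(a-c)^2+(b-d)^2$). Summing $(a-b)(c-d)/h$ over $j$, the first bracket reproduces the target $\frac{1}{2h}\sum_j[(u_{j+1}^{n+1}-u_j^n)^2+(u_{j+1}^n-u_j^{n+1})^2]$ verbatim. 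In the second bracket $a-c=u_{j+1}^{n+1}-u_{j+1}^n$ and $b-d=u_j^{n+1}-u_j^n$ are temporal forward differences, each a multiple of $\tau$; after an index shift both $\sum_j(\delta_t^+u_{j+1}^n)^2$ and $\sum_j(\delta_t^+u_j^n)^2$ equal $\frac1h\|\delta_t^+u^n\|_{l^2}^2$ by periodicity, and the two copies combine to yield exactly $-\frac{\tau^2}{h^2}\|\delta_t^+u^n\|_{l^2}^2$. Finding the correct four-point splitting is the only non-routine ingredient; once it is written down, the remainder is index bookkeeping.
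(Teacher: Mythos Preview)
Your argument is correct. The paper does not supply its own proof of this lemma; it simply cites \cite{bao2012analysis}, so there is no in-paper argument to compare against. What you have written is a clean, self-contained elementary proof: summation by parts for the first identity, pointwise quadratic completion for the second, and the four-point polarization
\[
(a-b)(c-d)=\tfrac12\big[(a-d)^2+(c-b)^2\big]-\tfrac12\big[(a-c)^2+(b-d)^2\big]
\]
for the third. One small remark: the parenthetical ``the difference of the two brackets collapses to $(a-c)^2+(b-d)^2$'' is garbled---the difference of the brackets is $2(a-b)(c-d)$, not $(a-c)^2+(b-d)^2$---but this does not affect the validity, since the identity is easily verified by direct expansion as you say. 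The periodicity hypothesis is used exactly where you indicate (index shifts in the first and third identities), and the factors of $h$ and $\tau$ are tracked correctly throughout.
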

\begin{theorem}
The discrete scheme (\ref{CNFD}) satisfies the discrete energy conservation law:
\begin{align}\label{CNFDE}
\begin{split}
E^{\varepsilon,n}:=&\|\delta_{t}^{+}u^{\varepsilon,n}\|^{2}_{l^{2}}+\frac{1}{2}\left( \|\delta_{x}^{+}u^{\varepsilon,n+1}\|^{2}_{l^{2}}+ \|\delta_{x}^{+}u^{\varepsilon,n}\|^{2}_{l^{2}} \right)+\frac{1}{2}\left( \|u^{\varepsilon,n+1}\|^{2}_{l^{2}}+\|u^{\varepsilon,n}\|^{2}_{l^{2}} \right)\\
&+\frac{ h}{2}\sum \limits^{N-1}_{j=0}\left[ F_{\varepsilon}((u^{\varepsilon,n+1}_{j})^{2})+F_{\varepsilon}((u^{\varepsilon,n}_{j})^{2}) \right] \equiv E^{\varepsilon,0},~~~n=0,1,2,\ldots
\end{split}
\end{align}
Similarly, the scheme (\ref{SIFD1}) conserves :
\begin{align}\label{SIFD1E}
\begin{split}
\widetilde{E}^{\varepsilon,n}:=&\|\delta_{t}^{+}u^{\varepsilon,n}\|^{2}_{l^{2}}+h\sum \limits^{N-1}_{j=0}( \delta_{x}^{+}u^{\varepsilon,n+1}_{j}) (\delta_{x}^{+}u^{\varepsilon,n}_{j}) +\frac{1}{2}\left( \|u^{\varepsilon,n+1}\|^{2}_{l^{2}}+\|u^{\varepsilon,n}\|^{2}_{l^{2}} \right)\\
&+\frac{h}{2}\sum \limits^{N-1}_{j=0}\left[ F_{\varepsilon}((u^{\varepsilon,n+1}_{j})^{2})+F_{\varepsilon}((u^{\varepsilon,n}_{j})^{2}) \right] \equiv \widetilde{E}^{\varepsilon,0},~~~n=0,1,2,\ldots
\end{split}
\end{align}
\end{theorem}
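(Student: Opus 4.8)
The plan is to derive both conservation laws by the standard discrete energy device: take the discrete inner product of each scheme with $2\tau\,\delta_t u^{\varepsilon,n}$, i.e.\ multiply equation (\ref{CNFD}) (resp.\ (\ref{SIFD1})) by $h\big(u^{\varepsilon,n+1}_j-u^{\varepsilon,n-1}_j\big)$ and sum over $j\in\mathcal{T}_N$. Since the right-hand side vanishes, the goal is to recognize the resulting sum as the telescoping difference $E^{\varepsilon,n}-E^{\varepsilon,n-1}$ (resp.\ $\widetilde E^{\varepsilon,n}-\widetilde E^{\varepsilon,n-1}$), whence $E^{\varepsilon,n}\equiv E^{\varepsilon,0}$ follows by induction on $n$. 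The recurring algebraic fact is that, with $a=u^{\varepsilon,n+1}-u^{\varepsilon,n}$ and $b=u^{\varepsilon,n}-u^{\varepsilon,n-1}$, one has $(a-b)(a+b)=a^2-b^2$, which is what converts the second-order time difference into a difference of kinetic terms.

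First I would treat the three linear terms. For the acceleration term $\delta_t^2 u^{\varepsilon,n}$ the product with $u^{\varepsilon,n+1}-u^{\varepsilon,n-1}$ collapses, via the identity above, to $\|\delta_t^+u^{\varepsilon,n}\|_{l^2}^2-\|\delta_t^+u^{\varepsilon,n-1}\|_{l^2}^2$, which is exactly the jump of the first block of the energy. For the mass term $\tfrac12(u^{\varepsilon,n+1}+u^{\varepsilon,n-1})$ the product yields $\tfrac12\big(\|u^{\varepsilon,n+1}\|_{l^2}^2-\|u^{\varepsilon,n-1}\|_{l^2}^2\big)$, matching the jump of $\tfrac12(\|u^{\varepsilon,n+1}\|_{l^2}^2+\|u^{\varepsilon,n}\|_{l^2}^2)$. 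The Laplacian term is where the two schemes part ways and is handled by summation by parts, using $(\delta_x^2 u,v)=-(\delta_x^+u,\delta_x^+v)$ from Lemma \ref{innerproduct}: for (\ref{CNFD}) the factor $\tfrac12\delta_x^2(u^{\varepsilon,n+1}+u^{\varepsilon,n-1})$ produces $\tfrac12\big(\|\delta_x^+u^{\varepsilon,n+1}\|_{l^2}^2-\|\delta_x^+u^{\varepsilon,n-1}\|_{l^2}^2\big)$ through $(a+b,a-b)=\|a\|^2-\|b\|^2$, whereas for (\ref{SIFD1}) the term $\delta_x^2 u^{\varepsilon,n}$ produces $(\delta_x^+u^{\varepsilon,n},\delta_x^+u^{\varepsilon,n+1})-(\delta_x^+u^{\varepsilon,n},\delta_x^+u^{\varepsilon,n-1})$; these are precisely the jumps of the corresponding gradient entries in (\ref{CNFDE}) and (\ref{SIFD1E}), which explains the two different gradient expressions.

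The step carrying the real content is the nonlinear term, which is exactly what the construction of $G_\varepsilon$ in (\ref{G}) is designed to handle. Multiplying $G_\varepsilon(u^{\varepsilon,n+1}_j,u^{\varepsilon,n-1}_j)$ by $u^{\varepsilon,n+1}_j-u^{\varepsilon,n-1}_j$ and invoking the second form in (\ref{G}), the factor $\tfrac{z_1+z_2}{2}$ combines with $z_1-z_2$ into $\tfrac12(z_1^2-z_2^2)$, which cancels the denominator $z_1^2-z_2^2$ and leaves the clean discrete gradient identity
\[
G_\varepsilon(z_1,z_2)\,(z_1-z_2)=\tfrac12\big(F_\varepsilon(z_1^2)-F_\varepsilon(z_2^2)\big).
\]
With $z_1=u^{\varepsilon,n+1}_j$, $z_2=u^{\varepsilon,n-1}_j$, summing over $j$ gives $\tfrac{h}{2}\sum_{j}\big[F_\varepsilon((u^{\varepsilon,n+1}_j)^2)-F_\varepsilon((u^{\varepsilon,n-1}_j)^2)\big]$, the jump of the last block of the energy.

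Finally I would add the four contributions. Because their sum vanishes (the scheme holds pointwise) and each one is the forward jump of one block of $E^{\varepsilon,n}$ (resp.\ $\widetilde E^{\varepsilon,n}$), I obtain $E^{\varepsilon,n}-E^{\varepsilon,n-1}=0$ for every $n\ge1$, and likewise for $\widetilde E^{\varepsilon,n}$, giving conservation. The only delicate point is bookkeeping: one must verify that each pointwise difference telescopes against the two-level averaged blocks (e.g.\ $\tfrac12(\|u^{\varepsilon,n+1}\|_{l^2}^2+\|u^{\varepsilon,n}\|_{l^2}^2)$) rather than single-level quantities, but no inequality or regularity hypothesis beyond the well-definedness of $F_\varepsilon$ enters.
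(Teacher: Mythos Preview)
Your proposal is correct and follows essentially the same approach as the paper: taking the discrete inner product of the scheme with $\delta_t u^{\varepsilon,n}$ (equivalently, multiplying by $h(u^{\varepsilon,n+1}_j-u^{\varepsilon,n-1}_j)$), using summation by parts via Lemma~\ref{innerproduct} for the spatial term, and recognizing each contribution as a telescoping jump of the corresponding energy block. Your treatment of the nonlinear term---making explicit the discrete gradient identity $G_\varepsilon(z_1,z_2)(z_1-z_2)=\tfrac12\big(F_\varepsilon(z_1^2)-F_\varepsilon(z_2^2)\big)$---is in fact more detailed than the paper's own presentation, which simply states the resulting identity.
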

\begin{proof}
By taking the inner of (\ref{CNFD}) with $\delta_{t}u^{n}_{j}$, summing the identity together for $j$ from $0$ to $N-1$, and using Lemma \ref{innerproduct}, we have
\begin{align}
\begin{split}
&\|\delta_{t}^{+}u^{\varepsilon,n}\|^{2}_{l^{2}}-\|\delta_{t}^{+}u^{\varepsilon,n-1}\|^{2}_{l^{2}}+\frac{1}{2}( \|\delta_{x}^{+}u^{\varepsilon,n+1}\|^{2}_{l^{2}}- \|\delta_{x}^{+}u^{\varepsilon,n-1}\|^{2}_{l^{2}}\\
&+\|u^{\varepsilon,n+1}\|^{2}_{l^{2}}-\|u^{\varepsilon,n-1}\|^{2}_{l^{2}})+\sum_{j=0}^{N-1}\frac{h}{2}\left[F_{\varepsilon}((u^{\varepsilon,n+1}_{j})^{2})-F_{\varepsilon}((u^{\varepsilon,n-1}_{j})^{2})\right]=0. \end{split}
\end{align}
Define
\begin{align}
\begin{split}
E^{\varepsilon,n}:=&\|\delta_{t}^{+}u^{\varepsilon,n}\|^{2}_{l^{2}}+\frac{1}{2}\left( \|\delta_{x}^{+}u^{\varepsilon,n+1}\|^{2}_{l^{2}}+ \|\delta_{x}^{+}u^{\varepsilon,n}\|^{2}_{l^{2}} \right)+\frac{1}{2}\left( \|u^{\varepsilon,n+1}\|^{2}_{l^{2}}+\|u^{\varepsilon,n}\|^{2}_{l^{2}} \right)\\
&+\frac{ h}{2}\sum \limits^{N-1}_{j=0}\left[ F_{\varepsilon}((u^{\varepsilon,n+1}_{j})^{2})+F_{\varepsilon}((u^{\varepsilon,n}_{j})^{2}) \right],~~~n=0,1,2,\ldots,
\end{split}
\end{align}
we can obtain
\begin{align}
{E}^{\varepsilon,n}\equiv {E}^{\varepsilon,0}.
\end{align}
Similarly, we can get
\begin{align}
\begin{split}
\widetilde{E}^{\varepsilon,n}:=&\|\delta_{t}^{+}u^{\varepsilon,n}\|^{2}_{l^{2}}+h\sum \limits^{N-1}_{j=0}( \delta_{x}^{+}u^{\varepsilon,n+1}_{j}) (\delta_{x}^{+}u^{\varepsilon,n}_{j}) +\frac{1}{2}\left( \|u^{\varepsilon,n+1}\|^{2}_{l^{2}}+\|u^{\varepsilon,n}\|^{2}_{l^{2}} \right)\\
&+\frac{h}{2}\sum \limits^{N-1}_{j=0}\left[ F_{\varepsilon}((u^{\varepsilon,n+1}_{j})^{2})+F_{\varepsilon}((u^{\varepsilon,n}_{j})^{2}) \right] \equiv \widetilde{E}^{\varepsilon,0},~~~n=0,1,2,\ldots.
\end{split}
\end{align}
This ends the proof.
\end{proof}

%%%%%%%%%%%%%%%%%%%%%%%%
\subsection{Stability analysis}
%%%%%%%%%%%%%%%%%%%%%%%%
Let $0<T<T_{\max}$ with $T_{\max}$ being the maximum existence time. Define
\begin{align}
\sigma_{\max}:=\max\{|\ln(\varepsilon^{2})|,|\ln(\varepsilon^{2}+\|u^{\varepsilon,n}\|^{2}_{l^{\infty}})|\},~~0\leq n\leq\frac{T}{\tau}-1.
\end{align}
According to the von Neumann linear stability analysis, we can get the following stability results for the FDTD schemes.
\begin{theorem}\label{stability}
For the above FDTD schemes applied to the RLogKGE (\ref{RLogKGE}) up to $t=T$, we can obtain:

(i)The CNFD scheme (\ref{CNFD}) is unconditionally stable for any $h>0,\tau>0, 0< \varepsilon \ll 1$.

(ii)When $4-h^{2}(1+\sigma_{\max})\leq 0$, the SIEFD scheme (\ref{SIFD1}) is unconditionally stable; and when $4-h^{2}(1+\sigma_{\max})>0$, it is conditionally stable under the stability condition
\begin{align}\label{sifd1stability}
\tau\leq \frac{2h}{\sqrt{4-h^{2}-h^{2}\sigma_{\max}}}.
\end{align}
\end{theorem}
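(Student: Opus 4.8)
The plan is to run a von Neumann (Fourier‑mode) analysis on the \emph{linearized, frozen‑coefficient} versions of the two schemes. First I would freeze the logarithmic nonlinearity: since $G_{\varepsilon}(u^{\varepsilon,n+1}_{j},u^{\varepsilon,n-1}_{j})$ is, by its definition in \eqref{G}, the symmetric average of $f_{\varepsilon}=\ln(\varepsilon^{2}+\cdot)$ multiplied by $\tfrac12(u^{\varepsilon,n+1}_{j}+u^{\varepsilon,n-1}_{j})$, I replace it by $\tfrac{\sigma}{2}(u^{\varepsilon,n+1}_{j}+u^{\varepsilon,n-1}_{j})$, where $\sigma$ is a frozen value of $\ln(\varepsilon^{2}+(u^{\varepsilon})^{2})$ controlled by $|\sigma|\le\sigma_{\max}$. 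This reduces each scheme to a linear constant‑coefficient three‑level recursion, to which the amplification‑factor machinery applies.

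Next I would insert the single Fourier mode $u^{n}_{j}=\xi^{n}e^{i\mu(x_{j}-a)}$, using $\delta^{2}_{x}e^{i\mu x_{j}}=-p\,e^{i\mu x_{j}}$ with $p=\tfrac{4}{h^{2}}\sin^{2}(\tfrac{\mu h}{2})\in[0,4/h^{2}]$. For both schemes this yields a \emph{palindromic} quadratic $c\xi^{2}+b\xi+c=0$ for the amplification factor: for CNFD, where the Laplacian is averaged over levels $n\pm1$, one obtains $c=1+\tfrac{\tau^{2}}{2}(p+1+\sigma)$ and $b=-2$; for SIEFD, where the Laplacian sits at level $n$, one obtains $c=1+\tfrac{\tau^{2}}{2}(1+\sigma)$ and $b=\tau^{2}p-2$. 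The structural observation that drives everything is that the product of the two roots equals $c/c=1$, so no growing mode is present iff both roots lie on the unit circle, which happens exactly when they are a complex‑conjugate pair, i.e. when the discriminant is nonpositive, equivalently $|b|\le 2|c|$.

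For CNFD I would verify that $|b|=2\le|2+\tau^{2}(p+1+\sigma)|=2|c|$ holds for every $p\ge0$ and every $\tau>0$, which is immediate as soon as the effective coefficient $p+1+\sigma$ is nonnegative; this gives the claimed unconditional stability. For SIEFD the test $|b|\le2|c|$ becomes $(\tau^{2}p-2)^{2}\le(2+\tau^{2}(1+\sigma))^{2}$, which I would factor (after removing the positive factor $\tau^{2}$) as $[\tau^{2}(p-1-\sigma)-4]\,(p+1+\sigma)\le0$. Taking the worst frequency $p=4/h^{2}$ and inserting the frozen‑coefficient bound through $\sigma_{\max}$ produces the condition $\tau^{2}\big(4-h^{2}(1+\sigma_{\max})\big)\le4h^{2}$: when $4-h^{2}(1+\sigma_{\max})\le0$ the inequality holds automatically (unconditional stability), and otherwise it rearranges to the stated CFL bound $\tau\le 2h/\sqrt{4-h^{2}-h^{2}\sigma_{\max}}$.

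The main obstacle, and the step demanding the most care, is the treatment of the frozen coefficient $\sigma$. Because $\ln(\varepsilon^{2}+(u^{\varepsilon})^{2})$ can be large and of either sign (indeed $\ln\varepsilon^{2}\to-\infty$ as $\varepsilon\to0$), I must track where $\sigma$ enters and with which sign, decide which of $\sigma=\pm\sigma_{\max}$ is binding, and separate the regime $p+1+\sigma\ge0$ (oscillatory modes, $|\xi|=1$) from $p+1+\sigma<0$, where the palindromic quadratic can have real roots off the unit circle — reflecting the genuine exponential growth of a negative effective potential rather than a numerical artifact. Making the freezing rigorous is the heuristic heart of the argument, since von Neumann analysis is exact only for constant‑coefficient linear recursions; the bound $|\sigma|\le\sigma_{\max}$ is precisely what allows the local frozen analysis to be promoted to a uniform statement over all grid points and time levels.
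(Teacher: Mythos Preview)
Your approach is essentially the same as the paper's: a von Neumann analysis on the frozen-coefficient linearization, leading to a characteristic equation whose roots have product one, with stability equivalent to both roots lying on the unit circle. The paper phrases this as $\xi_{l}^{2}-2\theta_{l}\xi_{l}+1=0$ with the test $|\theta_{l}|\le1$ (your $|b|\le2|c|$ after dividing through by $c$), computes $\theta_{l}$ explicitly for each scheme using $s_{l}^{2}=\tfrac{4}{h^{2}}\sin^{2}(l\pi/N)$ (your $p$), and obtains the same CFL condition; your discussion of the sign of $p+1+\sigma$ is in fact more careful than the paper's, which simply assumes $\alpha\ge-1$ in the linearized case and then invokes ``similarly'' for the nonlinear one.
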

\begin{proof}
Substituting
\begin{align}
u^{\varepsilon,n-1}_{j}=\sum_{l}\hat{U}_{l}e^{2ijl\pi/N},~~u^{\varepsilon,n}_{j}=\sum_{l}\xi_{l}\hat{U}_{l}e^{2ijl\pi/N},~~u^{\varepsilon,n+1}_{j}=\sum_{l}\xi_{l}^{2}\hat{U}_{l}e^{2ijl\pi/N},~~
\end{align}
into (\ref{CNFD})-(\ref{SIFD1}), where $\xi_{l}$ is the amplification factor of the $l$th mode in phase space, we can get the characteristic equation with the following structure
\begin{align}
\xi_{l}^{2}-2 \theta_{l} \xi_{l}+1=0, \quad l=-\frac{N}{2}, \ldots, \frac{N}{2}-1,
\end{align}
where $\theta_{l}$ is invariant with different methods. By the above equation, we get $\xi_{l}=\theta_{l}\pm \sqrt{\theta_{l}^{2}-1}$. The stability of numerical schemes amounts to
\begin{align}
\left|\xi_{l}\right| \leq 1 \Longleftrightarrow\left|\theta_{l}\right| \leq 1, \quad l=-\frac{N}{2}, \ldots, \frac{N}{2}-1.
\end{align}
Denote $s_{l}=\frac{2}{h} \sin \left(\frac{l \pi}{N}\right), \quad l=-\frac{N}{2}, \ldots, \frac{N}{2}-1$, we have
\begin{align}\label{stability1}
0\leq s_{l}^{2}\leq \frac{4}{h^{2}}.
\end{align}
Firstly, we consider the situation of linearity, i.e. $f_{\varepsilon}=\alpha$, $\alpha$ is a constant satisfying $\alpha\geq-1$.

(i) For the CNFD scheme (\ref{CNFD}), we have
\begin{align}
0 \leq \theta_{l}=\frac{2 }{2 +\tau^{2}\left(\alpha+s_{l}^{2}+1\right)} \leq 1, \quad l=-\frac{N}{2}, \ldots, \frac{N}{2}-1.
\end{align}
We can conclude that the CNFD scheme (\ref{CNFD}) is unconditionally stable for any $h>0,\tau>0, 0< \varepsilon \ll 1$.

(ii) For the SIEFD scheme (\ref{SIFD1}), we have
\begin{align}
\theta_{l}=\frac{2- s_{l}^{2}\tau^{2}}{2 +\tau^{2}\left(\alpha+1\right)} , \quad l=-\frac{N}{2}, \ldots, \frac{N}{2}-1.
\end{align}
When $4-h^{2}(1+\alpha)\leq 0$, it implies that $|\theta_{l}| \leq 1$ and the SIEFD scheme (\ref{SIFD1}) is unconditionally stable. On the other hand, when $4-h^{2}(1+\alpha)>0$, under the condition $\tau\leq \frac{2h}{\sqrt{4-h^{2}-h^{2}\alpha}}$, we have
\begin{align}
(s_{l}^{2}-1-\alpha)\tau^{2}\leq (\frac{4}{h^{2}}-1-\alpha)\tau^{2}\leq 4,
\end{align}
it implies that, when $\tau\leq \frac{2h}{\sqrt{4-h^{2}-h^{2}\alpha}}$, the SIEFD scheme (\ref{SIFD1}) is stable.

Similarly, when $f_{\varepsilon}$ is nonlinear, we can get the conclusions of Theorem \ref{stability}.
\end{proof}
%%%%%%%%%%%%%%%%%%%%%%%%
\subsection{Solvability and conservation}
%%%%%%%%%%%%%%%%%%%%%%%%
\begin{lemma}\label{CNFDsol}(solvability of the CNFD) For any given $u^{\varepsilon,n},u^{\varepsilon,n-1},\phi^{\varepsilon,n}\in X_{N},~ (n\geq1)$, denote $C^{n}=f_{\varepsilon}((\phi^{\varepsilon,n})^{2})=\ln (\varepsilon^{2}+(\phi^{\varepsilon,n})^{2})$, there exists $\tau_{s}>0$ which depends on $C^{n}$ such that when $\tau<\tau_{s}$, the solution $u^{\varepsilon,n+1}$ of the CNFD (\ref{CNFD}) is unique at each time.
\end{lemma}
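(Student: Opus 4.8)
The plan is to recast the CNFD update as a fixed-point equation for the single unknown $u^{\varepsilon,n+1}$ and to invoke the Banach fixed-point theorem, which delivers existence and uniqueness at once. Write $w:=u^{\varepsilon,n+1}$ for the unknown and treat $u^{\varepsilon,n},u^{\varepsilon,n-1}$ as given data. Multiplying (\ref{CNFD}) by $\tau^{2}$ and collecting all terms that carry $w$ on the left produces the system
\begin{align*}
A w + \tau^{2} G_{\varepsilon}(w, u^{\varepsilon,n-1}) = b, \qquad A := \Big(1+\tfrac{\tau^{2}}{2}\Big)I - \tfrac{\tau^{2}}{2}\delta_{x}^{2},
\end{align*}
where $G_{\varepsilon}(w,u^{\varepsilon,n-1})$ is understood componentwise and $b\in X_{N}$ depends only on the known data. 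Since $-\delta_{x}^{2}$ is symmetric positive semidefinite on $X_{N}$ under periodic boundary conditions (Lemma \ref{innerproduct}), $A$ is symmetric positive definite with $A\succeq(1+\tfrac{\tau^{2}}{2})I$, hence invertible with $\|A^{-1}\|_{l^{2}}\le(1+\tfrac{\tau^{2}}{2})^{-1}\le 1$. The fixed-point map is then $\mathcal{T}(w):=A^{-1}\big(b-\tau^{2}G_{\varepsilon}(w,u^{\varepsilon,n-1})\big)$, whose fixed points are precisely the solutions of (\ref{CNFD}).

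Next I would run the contraction argument on a closed ball $B_{R}=\{w\in X_{N}:\|w-w_{\ast}\|_{l^{2}}\le R\}$ of fixed radius centered at the explicit linear solution $w_{\ast}:=A^{-1}b$. Two estimates are needed. The self-map property follows from $\|\mathcal{T}(w)-w_{\ast}\|_{l^{2}}=\tau^{2}\|A^{-1}G_{\varepsilon}(w,u^{\varepsilon,n-1})\|_{l^{2}}\le\tau^{2}\sup_{B_{R}}\|G_{\varepsilon}(\cdot,u^{\varepsilon,n-1})\|_{l^{2}}$, which is $\le R$ once $\tau$ is small. For the contraction property, because $G_{\varepsilon}$ acts pointwise one has $\|G_{\varepsilon}(w,\cdot)-G_{\varepsilon}(\tilde w,\cdot)\|_{l^{2}}\le L_{R}\|w-\tilde w\|_{l^{2}}$, where $L_{R}$ is the pointwise Lipschitz constant of $z\mapsto G_{\varepsilon}(z,c)$ over the range of grid values attained on $B_{R}$, so that $\|\mathcal{T}(w)-\mathcal{T}(\tilde w)\|_{l^{2}}\le\frac{\tau^{2}L_{R}}{1+\tau^{2}/2}\|w-\tilde w\|_{l^{2}}$. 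The contraction condition $\tau^{2}(L_{R}-\tfrac12)<1$ is exactly what fixes the threshold $\tau_{s}$, and both $\sup_{B_{R}}\|G_{\varepsilon}\|$ and $L_{R}$ are controlled through the logarithmic magnitude $|C^{n}|=|\ln(\varepsilon^{2}+(\phi^{\varepsilon,n})^{2})|$ (together with $|\ln\varepsilon^{2}|$), which is why $\tau_{s}$ depends on $C^{n}$. The inverse inequality $\|\cdot\|_{l^{\infty}}\le h^{-1/2}\|\cdot\|_{l^{2}}$ on $X_{N}$ is what lets me pass from the $l^{2}$ ball to a pointwise bound on the $w_{j}$ and thereby keep the argument of $f_{\varepsilon}$ in a controlled range.

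The main obstacle is the Lipschitz estimate for $G_{\varepsilon}(z,c)$: in the divided-difference form $G_{\varepsilon}(z,c)=\big(F_{\varepsilon}(z^{2})-F_{\varepsilon}(c^{2})\big)/\big(2(z-c)\big)$ the factor $z-c$ creates an apparent singularity on the diagonal $z=c$, and moreover a quick asymptotic check shows $G_{\varepsilon}(\cdot,c)$ is not globally Lipschitz (its slope grows like $\ln|z|$), so the ball restriction and the $C^{n}$-dependence are genuinely necessary. The clean way around the diagonal singularity is to use the integral representation already recorded in (\ref{G}), namely $G_{\varepsilon}(z,c)=\tfrac{z+c}{2}\int_{0}^{1}f_{\varepsilon}(\theta z^{2}+(1-\theta)c^{2})\,\mathrm{d}\theta$ with $f_{\varepsilon}(\rho)=\ln(\varepsilon^{2}+\rho)$, which is manifestly smooth in $z$. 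Differentiating under the integral gives $\partial_{z}G_{\varepsilon}=\tfrac12\int_{0}^{1}f_{\varepsilon}\,\mathrm{d}\theta+(z+c)z\int_{0}^{1}\theta f_{\varepsilon}'(\theta z^{2}+(1-\theta)c^{2})\,\mathrm{d}\theta$, and since $|f_{\varepsilon}|$ is bounded on $B_{R}$ by a quantity comparable to $|C^{n}|$ while $|f_{\varepsilon}'(\rho)|=(\varepsilon^{2}+\rho)^{-1}\le\varepsilon^{-2}$, one obtains an explicit finite bound $L_{R}=L_{R}(\varepsilon,C^{n},R,h)$. Feeding this into the self-map and contraction estimates and applying the Banach fixed-point theorem yields a unique fixed point of $\mathcal{T}$ in $B_{R}$, i.e. a unique $u^{\varepsilon,n+1}$, for all $\tau<\tau_{s}$.
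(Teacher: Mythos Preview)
Your contraction argument delivers existence cleanly, but not the uniqueness the lemma actually asserts. Banach's theorem on the ball $B_R$ guarantees a unique fixed point \emph{inside $B_R$}; it says nothing about solutions lying outside. Since you (correctly) note that $z\mapsto G_\varepsilon(z,c)$ is not globally Lipschitz --- its slope grows like $\ln|z|$ --- you cannot extend the contraction to all of $X_N$, and you supply no a priori estimate forcing every solution of (\ref{CNFD}) into $B_R$. A related weakness is that you pass from the $l^{2}$ ball to a pointwise bound via the inverse inequality $\|\cdot\|_{l^\infty}\le h^{-1/2}\|\cdot\|_{l^2}$, which injects a factor $h^{-1/2}$ into $L_R$ and makes your threshold $\tau_s$ degenerate as $h\to0$, whereas the statement intends $\tau_s$ to depend only on $C^{n}$.

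The paper sidesteps both issues by decoupling existence from uniqueness. Existence comes from Brouwer's theorem applied to $K^{n}_{\varepsilon}(v)=v-u^{\varepsilon,n}-\tfrac{\tau^{2}}{2}M^{n}_{\varepsilon}(v)$ after rewriting the scheme in the midpoint variable $\hat u^{\varepsilon,n}=\tfrac12(u^{\varepsilon,n+1}+u^{\varepsilon,n-1})$; the coercivity $(K^{n}_{\varepsilon}(v),v)/\|v\|_{l^{2}}\to\infty$ is obtained from the discrete Gagliardo--Nirenberg inequality $\|v\|_{l^{4}}^{2}\le C\|\delta_{x}^{+}v\|_{l^{2}}^{1/2}\|v\|_{l^{2}}^{3/2}$, which is $h$-independent. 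Global uniqueness then follows from the standard subtraction step: assume two solutions $p,q$, test the difference equation against $m=p-q$, and absorb the nonlinear remainder with the same Gagliardo--Nirenberg bound to force $m=0$ once $\tau<\tau_{s}=(CC^{n})^{-2/3}$. Your integral representation for $\partial_{z}G_{\varepsilon}$ is a clean way to handle the diagonal singularity and would plug naturally into that subtraction argument, but the global step itself is missing from your outline.
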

\begin{proof}
Firstly, we prove the solution existence of the CNFD (\ref{CNFD}). We denote $\hat{u}^{\varepsilon,n}_{j}=\frac{u^{\varepsilon,n+1}_{j}+u^{\varepsilon,n-1}_{j}}{2},~j\in \mathcal{T}^{0}_{N},~n\geq 1,~\hat{u}^{\varepsilon,n}\in X_{N}$. For any given $u^{\varepsilon,n+1}, u^{\varepsilon,n}, u^{\varepsilon,n-1} \in X_{N}$, we rewrite the CNFD as
\begin{align}
\hat{u}^{\varepsilon,n}=u^{\varepsilon,n}+\frac{\tau^{2}}{2}M^{n}_{\varepsilon}(\hat{u}^{\varepsilon,n}),~~n\geq 1,
\end{align}
where $M^{n}_{\varepsilon}: X_{N}\rightarrow X_{N}$ is defined as
\begin{align}
M^{n}_{\varepsilon,j}(v)=\delta_{x}^{2}v_{j}-v_{j}-H_{\varepsilon}^{n}(v_{j})\cdot v_{j},~j\in \mathcal{T}^{0}_{N},~n\geq 1,
\end{align}
with $H_{\varepsilon}^{n}(v_{j})=\frac{F_{\varepsilon}((2v_{j}-u^{\varepsilon,n-1}_{j})^{2})-F_{\varepsilon}((u^{\varepsilon,n-1}_{j})^{2})}{(2v_{j}-u^{\varepsilon,n-1}_{j})^{2}-(u^{\varepsilon,n-1}_{j})^{2}}$.
There exists a $\phi^{\varepsilon,n} \in X_{N}$ satisfying
\begin{align}
\frac{F_{\varepsilon}((2v_{j}-u^{\varepsilon,n-1}_{j})^{2})-F_{\varepsilon}((u^{\varepsilon,n-1}_{j})^{2})}{(2v_{j}-u^{\varepsilon,n-1}_{j})^{2}-(u^{\varepsilon,n-1}_{j})^{2}}=f_{\varepsilon}((\phi^{\varepsilon,n})^{2})=C^{n}.
\end{align}
According to the Cauchy inequality, the Sobolev inequality and the Young's inequality, we have
\begin{align}\label{uniq1}
\begin{split}
&|(H_{\varepsilon}^{n}(v)\cdot v,v)|=|(H_{\varepsilon}^{n}(v),v^{2})|\leq \|v\|^{2}_{4}\|H_{\varepsilon}^{n}\|_{l^{2}}\leq C\|\delta^{+}_{x}v\|_{l^{2}}^{\frac{1}{2}}\|v\|^{\frac{3}{2}}_{l^{2}}\|H_{\varepsilon}^{n}(v)\|_{l^{2}}\\
&\leq CC^{n}\|\delta^{+}_{x}v\|^{\frac{1}{2}}_{l^{2}}\|v\|^{\frac{3}{2}}_{l^{2}}\leq\|\delta^{+}_{x}v\|^{2}_{l^{2}}+(CC^{n})^{\frac{4}{3}}\|v\|^{2}_{l^{2}}.
\end{split}
\end{align}

Define the map $K^{n}_{\varepsilon}: X_{N}\rightarrow X_{N}$ as
\begin{align}
K^{n}_{\varepsilon}(v)=v-u^{\varepsilon,n}-\frac{\tau^{2}}{2}M^{n}_{\varepsilon}(v), ~v\in X_{N},~n\geq 1.
\end{align}
We can see that $K^{n}_{\varepsilon}$ is continuous from $X_{N}$ to $X_{N}$. In addition,
\begin{align}
\begin{split}
(K^{n}_{\varepsilon}(v),v)&=\|v\|^{2}_{l^{2}}-(u^{\varepsilon,n},v)+\frac{\tau^{2}}{2}\left[\|\delta_{x}^{+}v\|^{2}_{l^{2}}+ \|v\|^{2}_{l^{2}}+(H_{\varepsilon}^{n}(v_{j}), v^{2})\right]\\
&\geq \|v\|^{2}_{l^{2}}-\|u^{\varepsilon,n}\|_{l^{2}}\cdot\|v\|_{l^{2}}+\frac{\tau^{2}}{2}\|\delta_{x}^{+}v\|^{2}_{l^{2}}-\frac{\tau^{2}}{2}\|\delta_{x}^{+}v\|^{2}_{l^{2}}-\frac{\tau^{2}}{2}(C^{n}C)^{\frac{4}{3}}\cdot \|v\|^{2}_{l^{2}}\\
&=\left(\left(1-\frac{\tau^{2}}{2}(C^{n}C)^{\frac{4}{3}}\right)\|v\|_{l^{2}}-\|u^{n}\|_{l^{2}} \right)\|v\|_{l^{2}}.
\end{split}
\end{align}
Let $\tau_{s}=(C^{n}C)^{-\frac{2}{3}}$, when $\tau<\tau_{s}$, it is obviously that
\begin{align}
\lim_{\|v\|_{l^{2}}\rightarrow \infty}\frac{\left|(K^{n}_{\varepsilon}(v),v)\right|}{\|v\|_{l^{2}}}=\infty,~~n\geq1.
\end{align}
So $K^{n}_{\varepsilon}(v)$ is surjective. According to the Brouwer fixed point theorem \cite{landes1980galerkin}, it is easy to show that there exists a solution $u^{*}$ satisfying $K^{n}_{\varepsilon}(u^{*})=0$, which implies that the CNFD (\ref{CNFD}) is solvable.

Next, we verify the uniqueness of the solution of the CNFD (\ref{CNFD}).
We assume there exist two solutions $p,q\in X_{N}$ satisfying (\ref{CNFD}), and $m=p-q \in X_{N}$. It implies that
\begin{subequations}\label{CNFDuniq}
\begin{align}
&\frac{p_{j}-2u^{\varepsilon,n}_{j}+u^{\varepsilon,n-1}_{j}}{\tau^{2}}-\frac{1}{2}\delta_{x}^{2} (p_{j}+u^{\varepsilon,n-1}_{j})+\frac{1}{2} (p_{j}+u^{\varepsilon,n-1}_{j})+H_{\varepsilon}^{n}(p_{j})\cdot\frac{p_{j}+u^{\varepsilon,n-1}_{j}}{2}=0,\label{CNFDp}\\
&\frac{q_{j}-2u^{\varepsilon,n}_{j}+u^{\varepsilon,n-1}_{j}}{\tau^{2}}-\frac{1}{2}\delta_{x}^{2} (q_{j}+u^{\varepsilon,n-1}_{j})+\frac{1}{2} (q_{j}+u^{\varepsilon,n-1}_{j})+H_{\varepsilon}^{n}(q_{j})\cdot\frac{q_{j}+u^{\varepsilon,n-1}_{j}}{2}=0.\label{CNFDq}
\end{align}
\end{subequations}
Subtracting (\ref{CNFDq}) from (\ref{CNFDp}), we obtain
\begin{align}
\frac{m_{j}}{\tau^{2}}-\frac{1}{2}\delta_{x}^{2} m_{j}+\frac{1}{2}m_{j}+H_{\varepsilon}^{n}(p_{j})\cdot\frac{m_{j}}{2}+\big(H_{\varepsilon}^{n}(p_{j})-H_{\varepsilon}^{n}(q_{j})\big)\cdot\frac{q_{j}+u^{\varepsilon,n-1}_{j}}{2}=0,
\end{align}
which yields that
\begin{align}
\|m\|^{2}_{l^{2}}+\frac{\tau^{2}}{2}\left(\|\delta_{x}^{+} m\|^{2}_{l^{2}}+\|m\|^{2}_{l^{2}}\right)=-\frac{\tau^{2}}{2} \bigg{(} H_{\varepsilon}^{n}(p)\cdot m+\big{(}H_{\varepsilon}^{n}(p)-H_{\varepsilon}^{n}(q)\big{)}\cdot(q+u^{\varepsilon,n-1}),m \bigg{)}.
\end{align}
By recalling (\ref{uniq1}), when $\tau< \tau_{s}$, we get
\begin{align}
\|m\|^{2}_{l^{2}}+\frac{\tau^{2}}{2}(\|\delta_{x}^{+} m\|^{2}_{l^{2}}+\|m\|^{2}_{l^{2}})\leq \frac{\tau^{2}}{2} \left( \|\delta_{x}^{+}m\|^{2}_{l^{2}}+(C^{n}C)^{\frac{4}{3}}\cdot \|m\|^{2}_{l^{2}}+C\right)\leq \frac{\tau^{2}}{2} \|\delta_{x}^{+}m\|^{2}_{l^{2}}+\|m\|^{2}_{l^{2}},
\end{align}
which implies $m=0$. This ends the proof.
\end{proof}
\begin{remark}
The solvability of the SIEFD (\ref{SIFD1}) could be similarly proved using the same approach, therefore, we omit it.
\end{remark}

%%%%%%%%%%%%%%%%%%%%%%%%%%%%
\section{Error esitimates}
%%%%%%%%%%%%%%%%%%%%%%%%%%%
Motivated by the analytical estimates in \cite{bao2013optimal,bao2012uniform}, we will establish the error estimates of the FDTD schemes. Here we assume the exact solution $u^{\varepsilon}$ is smooth enough over $\Omega_{T}:\Omega\times [0,T],$ i.e.
\begin{align*}
(A)~~~&u^{\varepsilon}\in C\left( [0,T]; W^{5,\infty}_{p}\right)\cap C^{2}\left( [0,T]; W^{3,\infty}_{p}\right)\cap C^{4}\left( [0,T]; W^{1,\infty}_{p}\right),\\
~~&\left\|\frac{\partial^{r+s}}{\partial t^{r}\partial x^{s}}u^{\varepsilon}(x,t) \right\|_{L^{\infty}(\Omega_{T})}\lesssim 1,~~0\leq r\leq 4, ~0\leq r+s\leq 5,
\end{align*}
where $W_{p}^{m, \infty}=\left\{u^{\varepsilon}\in W^{m, \infty} | \frac{\partial^{l}}{\partial x^{l}}u^{\varepsilon}(a)= \frac{\partial^{l}}{\partial x^{l}}u^{\varepsilon}(b), 0 \leq l \leq m-1\right\}$ for $m \geq 1$
 and $0<T<T^{*}$ with $T^{*}$ being the maximum existence time of the solution.

Denote $\Lambda=\|u^{\varepsilon}(x,t)\|_{L^{\infty}(\Omega_{T})}$ and the grid `error' function $e^{\varepsilon,n} \in X_{N}~(n\geq0)$ as
\begin{align}
e^{\varepsilon,n}_{j}=u^{\varepsilon}(x_{j},t_{n})-u^{\varepsilon,n}_{j},~~j\in \mathcal{T}^{0}_{N},~~n=0,1,2,\ldots,
\end{align}
where $u^{\varepsilon}$ is the solution of (\ref{RLogKGE1}), $u^{\varepsilon,n}_{j}$ is the numerical approximation of the (\ref{RLogKGE1}).
\begin{theorem}\label{CNFDerr}
Under the assumption (A), there exist $h_{0}>0,\tau_{0}>0$  sufficiently small and independent of $\varepsilon,$ for any $0<\varepsilon \ll1$, when $0<h\leq h_{0}$ and $0<\tau\leq \tau_{0}$, the CNFD (\ref{CNFD}) with (\ref{initialvalue1}) and (\ref{initialvalue}) satisfies the following error estimates
\begin{align}\label{CNFDerror}
\|\delta_{x}^{+}e^{\varepsilon,n}\|_{l^{2}} + \|e^{\varepsilon,n}\|_{l^{2}}\lesssim e^{\frac{T}{2\varepsilon}}\left( h^{2}+\frac{\tau^{2}}{\varepsilon^{2}}\right),~~\|u^{\varepsilon,n}\|_{l^{\infty}}\leq \Lambda+1.
\end{align}
\end{theorem}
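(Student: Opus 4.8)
The plan is to run a discrete energy estimate on the error equation inside an induction (bootstrap) argument, in which a cut-off of the logarithmic nonlinearity and the one-dimensional discrete Sobolev/inverse inequalities are used to keep the numerical solution bounded in $l^\infty$. Since $e^{\varepsilon,0}=0$ and the PDE (\ref{RLogKGE1}) lets us rewrite $u_{tt}^\varepsilon(x_j,0)$ in terms of $\phi$, a Taylor expansion of the starter (\ref{initialvalue}) gives $\|e^{\varepsilon,1}\|_{l^2}+\|\delta_x^+e^{\varepsilon,1}\|_{l^2}\lesssim h^2+\tau^2/\varepsilon^2$, which serves as the base case. First I would insert the exact solution into the scheme (\ref{CNFD}) to define the local truncation error $\xi^{\varepsilon,n}\in X_N$. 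Taylor expanding in $x$ and $t$ and using assumption (A), the operators $\delta_x^2$ and $\delta_t^2$ together with the Crank--Nicolson averaging $\tfrac12(\cdot^{n+1}+\cdot^{n-1})$ contribute $O(h^2+\tau^2)$; the decisive contribution is the discrete-gradient term (\ref{G}), whose Taylor expansion about $t_n$ involves $f_\varepsilon'(\rho)=1/(\varepsilon^2+\rho)$ evaluated at $\rho=(u^\varepsilon)^2$. Because the exact solution may vanish, this factor is $O(\varepsilon^{-2})$, producing $\|\xi^{\varepsilon,n}\|_{l^2}\lesssim h^2+\tau^2/\varepsilon^2$.

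Subtracting the scheme for $u^{\varepsilon,n}_j$ from the one satisfied by the exact solution yields an error equation for $e^{\varepsilon,n}$ whose linear part is identical to (\ref{CNFD}) and whose right-hand side is $\xi^{\varepsilon,n}_j$ together with the nonlinear residual $R^n_j:=G_\varepsilon(u^\varepsilon(x_j,t_{n+1}),u^\varepsilon(x_j,t_{n-1}))-G_\varepsilon(u^{\varepsilon,n+1}_j,u^{\varepsilon,n-1}_j)$. Imitating the proof of the energy law (\ref{CNFDE}), I would take the discrete inner product with $\delta_t e^{\varepsilon,n}$, sum over $j$ and apply Lemma \ref{innerproduct}, so the linear terms telescope into the error energy
\[
S^{n}:=\|\delta_t^{+}e^{\varepsilon,n}\|_{l^2}^2+\tfrac12\big(\|\delta_x^{+}e^{\varepsilon,n+1}\|_{l^2}^2+\|\delta_x^{+}e^{\varepsilon,n}\|_{l^2}^2\big)+\tfrac12\big(\|e^{\varepsilon,n+1}\|_{l^2}^2+\|e^{\varepsilon,n}\|_{l^2}^2\big),
\]
leaving $S^{n}-S^{n-1}$ equal to $2\tau$ times the inner products of $\delta_t e^{\varepsilon,n}$ with $R^n$ and with $\xi^{\varepsilon,n}$.

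The key step is to dominate $R^n$ by the error itself. Under the induction hypothesis $\|u^{\varepsilon,m}\|_{l^\infty}\le\Lambda+1$ for $m\le n$, all arguments of $G_\varepsilon$ appearing in $R^n$ lie in a fixed ball; estimating $\partial_{z_1}G_\varepsilon$ and $\partial_{z_2}G_\varepsilon$ there (the worst term being $\int_0^1 2\theta z_1/(\varepsilon^2+\theta z_1^2+(1-\theta)z_2^2)\,\mathrm d\theta\le\varepsilon^{-1}$ by the arithmetic--geometric mean inequality) shows $G_\varepsilon$ is Lipschitz with constant $O(\varepsilon^{-1})$, hence $\|R^n\|_{l^2}\lesssim\varepsilon^{-1}(\|e^{\varepsilon,n+1}\|_{l^2}+\|e^{\varepsilon,n-1}\|_{l^2})$. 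Using Cauchy--Schwarz, $\|\delta_t e^{\varepsilon,n}\|_{l^2}^2\lesssim S^{n}+S^{n-1}$, and the truncation bound, I obtain $S^{n}-S^{n-1}\lesssim \tau\varepsilon^{-1}(S^{n}+S^{n-1})+\tau(h^2+\tau^2/\varepsilon^2)^2$; summing with the base case and invoking the discrete Gronwall inequality over $n\le T/\tau$ steps introduces the factor $e^{T/(2\varepsilon)}$ and yields $\|\delta_x^+e^{\varepsilon,n}\|_{l^2}+\|e^{\varepsilon,n}\|_{l^2}\lesssim e^{T/(2\varepsilon)}(h^2+\tau^2/\varepsilon^2)$.

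It remains to close the induction on the $l^\infty$ bound, and this is where I expect the main difficulty. The Lipschitz estimate above presupposes $\|u^{\varepsilon,m}\|_{l^\infty}\le\Lambda+1$, which is part of the conclusion, so the argument is circular unless the nonlinearity is first tamed. I would therefore replace $f_\varepsilon$ by a smooth, globally Lipschitz truncation that coincides with it for $|u|\le\Lambda+1$, carry out the energy/Gronwall estimate \emph{unconditionally} for the truncated scheme, and then apply the discrete Sobolev inequality $\|e^{\varepsilon,n+1}\|_{l^\infty}\lesssim\|e^{\varepsilon,n+1}\|_{l^2}^{1/2}(\|e^{\varepsilon,n+1}\|_{l^2}+\|\delta_x^+e^{\varepsilon,n+1}\|_{l^2})^{1/2}$ (together with the inverse inequality where a coarser bound suffices) to the resulting estimate. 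For $h\le h_0$ and $\tau\le\tau_0$ small enough this forces $\|e^{\varepsilon,n+1}\|_{l^\infty}\le1$, hence $\|u^{\varepsilon,n+1}\|_{l^\infty}\le\Lambda+1$, so the truncation stays inactive and the truncated scheme coincides with (\ref{CNFD}); this simultaneously advances both the energy estimate and the $l^\infty$ bound and completes the induction.
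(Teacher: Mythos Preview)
Your proposal is correct and follows essentially the same route as the paper: you identify the truncation error $\|\xi^{\varepsilon,n}\|_{l^2}\lesssim h^2+\tau^2/\varepsilon^2$, test the error equation against $\delta_t e^{\varepsilon,n}$ to obtain the same error energy $S^n$ (the paper's $\hat E^n$), bound the nonlinear residual with Lipschitz constant $O(\varepsilon^{-1})$, and close via Gronwall and the discrete Sobolev inequality. Your recognition that the induction is circular without a cut-off, and your fix of replacing $f_\varepsilon$ by a compactly supported truncation so the Lipschitz bound becomes unconditional, is precisely the paper's device: it introduces an auxiliary scheme with $G_B$ built from $f_B(s)=f_\varepsilon(s)\rho(s/B)$, proves the estimate for that scheme outright (Theorem~\ref{CNFDBerr}), and then observes that the resulting $l^\infty$ bound forces the cut-off to be inactive, so the auxiliary and original schemes coincide.
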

\begin{theorem}\label{SIFD1err}
Assume  $\tau\lesssim h$ and under the assumption (A), there exist $h_{0}>0,\tau_{0}>0$  sufficiently small and independent of $\varepsilon,$  for any $0<\varepsilon \ll1$, when $0<h\leq h_{0}$ and $0<\tau\leq \tau_{0}$ and under the stability condition (\ref{sifd1stability}), the SIEFD (\ref{SIFD1}) with (\ref{initialvalue1}) and (\ref{initialvalue}) satisfies the following error estimates
\begin{align}\label{SIFD1error}
\|\delta_{x}^{+}e^{\varepsilon,n}\|_{l^{2}} + \|e^{\varepsilon,n}\|_{l^{2}}\lesssim e^{\frac{T}{2\varepsilon}}\left( h^{2}+\frac{\tau^{2}}{\varepsilon^{2}}\right),~~\|u^{\varepsilon,n}\|_{l^{\infty}}\leq {\Lambda}+1.
\end{align}
\end{theorem}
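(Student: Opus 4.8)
The plan is to prove Theorem \ref{SIFD1err} by the same induction framework used for Theorem \ref{CNFDerr}, establishing the semi-$H^1$/$l^2$ error bound and the $l^\infty$ bound on $u^{\varepsilon,n}$ \emph{simultaneously} by induction on $n$. The two a priori hypotheses feed each other: the energy estimate controls $\|e^{\varepsilon,n}\|_{l^2}$, and the inverse inequality upgrades this to an $l^\infty$ bound that keeps the numerical solution inside the region where the nonlinearity is well-behaved, so that the nonlinear estimate can be applied at the next level. The genuinely new feature relative to the CNFD case is that SIEFD treats the Laplacian explicitly, which is where the restriction $\tau\lesssim h$ and the stability condition (\ref{sifd1stability}) must enter.

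First I would define the local truncation error $\tilde{\xi}^{\varepsilon,n}_j$ by substituting the exact solution $u^{\varepsilon}(x_j,t_n)$ into the SIEFD scheme (\ref{SIFD1}). Under assumption (A), Taylor expansion in space gives the $O(h^2)$ contribution from $\delta_x^2$ and Taylor expansion in time gives the $O(\tau^2)$ contribution from $\delta_t^2$. The delicate part is the nonlinear term $G_\varepsilon$, whose consistency error involves time derivatives of $f_\varepsilon(\rho)=\ln(\varepsilon^2+\rho)$; since $f_\varepsilon'(\rho)=(\varepsilon^2+\rho)^{-1}$ can be as large as $\varepsilon^{-2}$ where $u^\varepsilon$ is near zero, the temporal part of this contribution is $O(\tau^2/\varepsilon^2)$. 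Hence $\|\tilde{\xi}^{\varepsilon,n}\|_{l^2}\lesssim h^2+\tau^2/\varepsilon^2$, which is the source of the $\tau^2/\varepsilon^2$ in the final bound.

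Next I would subtract (\ref{SIFD1}) from the truncated equation to form the error equation for $e^{\varepsilon,n}$, take the discrete inner product with $\delta_t e^{\varepsilon,n}=(e^{\varepsilon,n+1}-e^{\varepsilon,n-1})/(2\tau)$, and sum over $j$. Using Lemma \ref{innerproduct}, the mass and time terms telescope into a discrete error energy $\widetilde{\mathcal{E}}^n$ modeled on $\widetilde{E}^{\varepsilon,n}$ in (\ref{SIFD1E}). The main obstacle is the gradient term: because SIEFD uses $\delta_x^2 u^{\varepsilon,n}$ explicitly, the gradient contribution to $\widetilde{\mathcal{E}}^n$ is the \emph{indefinite} cross term $h\sum_j(\delta_x^+ e^{\varepsilon,n+1}_j)(\delta_x^+ e^{\varepsilon,n}_j)$. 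Invoking the third identity in Lemma \ref{innerproduct}, this rewrites as a sum of squares minus $\frac{\tau^2}{h^2}\|\delta_t^+ e^{\varepsilon,n}\|_{l^2}^2$, and the negative term must be absorbed into $\|\delta_t^+ e^{\varepsilon,n}\|_{l^2}^2$. This is possible exactly when $\tau/h$ is small enough, i.e. under the stability condition (\ref{sifd1stability}) together with $\tau\lesssim h$, which restores coercivity so that $\widetilde{\mathcal{E}}^n\gtrsim \|\delta_t^+ e^{\varepsilon,n}\|_{l^2}^2+\|\delta_x^+ e^{\varepsilon,n+1}\|_{l^2}^2+\|\delta_x^+ e^{\varepsilon,n}\|_{l^2}^2+\|e^{\varepsilon,n+1}\|_{l^2}^2+\|e^{\varepsilon,n}\|_{l^2}^2$.

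For the nonlinear difference $G_\varepsilon(u^{\varepsilon}(x_j,t_{n+1}),u^{\varepsilon}(x_j,t_{n-1}))-G_\varepsilon(u^{\varepsilon,n+1}_j,u^{\varepsilon,n-1}_j)$ I would apply the cut-off technique: under the induction hypothesis $\|u^{\varepsilon,n}\|_{l^\infty}\le\Lambda+1$, replace $f_\varepsilon$ by a globally Lipschitz modification agreeing with it on $[-(\Lambda+1),\Lambda+1]$, with Lipschitz constant controlled by $\sigma_{\max}\lesssim\varepsilon^{-1}$; this bounds the nonlinear term by $\sigma_{\max}\big(\|e^{\varepsilon,n+1}\|_{l^2}+\|e^{\varepsilon,n-1}\|_{l^2}\big)$. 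Combined with a Cauchy--Schwarz estimate of the truncation term, this yields an inequality of the form $\widetilde{\mathcal{E}}^n-\widetilde{\mathcal{E}}^{n-1}\lesssim\tau\sigma_{\max}(\widetilde{\mathcal{E}}^n+\widetilde{\mathcal{E}}^{n-1})+\tau(h^2+\tau^2/\varepsilon^2)^2$; since the initial energy $\widetilde{\mathcal{E}}^0$ is $O\big((h^2+\tau^2/\varepsilon^2)^2\big)$ by (\ref{initialvalue}), the discrete Gronwall inequality gives a factor $e^{CT\sigma_{\max}}\le e^{T/(2\varepsilon)}$ and hence the stated $l^2$ and semi-$H^1$ bound. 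To close the induction I would use the inverse inequality $\|e^{\varepsilon,n+1}\|_{l^\infty}\lesssim h^{-1/2}\|e^{\varepsilon,n+1}\|_{l^2}\lesssim h^{-1/2}e^{T/(2\varepsilon)}(h^2+\tau^2/\varepsilon^2)$; exploiting $\tau\lesssim h$ so that $h^{-1/2}\tau^2$ stays controlled, choosing $h_0,\tau_0$ sufficiently small makes the right-hand side below $1$, whence $\|u^{\varepsilon,n+1}\|_{l^\infty}\le\|u^{\varepsilon}(\cdot,t_{n+1})\|_{l^\infty}+1\le\Lambda+1$, verifying the hypothesis at level $n+1$ and completing the induction.
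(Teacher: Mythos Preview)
Your proposal is essentially correct and follows the same overall strategy as the paper: truncation-error estimate, energy method on the error equation, cut-off of the nonlinearity, and an $l^\infty$ bound to close the argument. The paper itself gives almost no details for SIEFD (it just says ``similar to Theorem~\ref{CNFDBerr}''), and you in fact identify more explicitly than the paper where the stability condition (\ref{sifd1stability}) and the hypothesis $\tau\lesssim h$ enter, namely in restoring coercivity of the discrete error energy built on the indefinite cross term $h\sum_j(\delta_x^+e^{\varepsilon,n+1}_j)(\delta_x^+e^{\varepsilon,n}_j)$ via Lemma~\ref{innerproduct}.

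Two organizational differences are worth noting. First, rather than running an induction on $n$ in which the cut-off is invoked ``under the induction hypothesis'', the paper introduces an auxiliary scheme (\ref{SIFD1B}) with the globally truncated nonlinearity $G_B$, proves the full error estimate and $l^\infty$ bound for its solution $\hat u^{\varepsilon,n}$ in one shot (Theorem~\ref{SIFD1Berr}), and only then identifies $\hat u^{\varepsilon,n}$ with $u^{\varepsilon,n}$ by unique solvability. This sidesteps the apparent circularity in your induction step, where $G_\varepsilon(u^{\varepsilon,n+1},u^{\varepsilon,n-1})$ already involves the not-yet-bounded $u^{\varepsilon,n+1}$; your argument can be made rigorous in exactly this way, but as written it is a bit loose on this point. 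Second, in one space dimension the paper upgrades $\|\chi^{\varepsilon,n}\|_{l^2}+\|\delta_x^+\chi^{\varepsilon,n}\|_{l^2}$ to $\|\chi^{\varepsilon,n}\|_{l^\infty}$ via the discrete Sobolev inequality (no factor of $h^{-1/2}$), whereas you use the inverse inequality $\|e\|_{l^\infty}\lesssim h^{-1/2}\|e\|_{l^2}$. Both close the argument here, but the Sobolev route is sharper and does not by itself require $\tau\lesssim h$; in the paper that hypothesis is used only for the coercivity of the error energy, and the inverse inequality is reserved for the higher-dimensional extension (Remark after Theorem~\ref{SIFD1err}).
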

\begin{remark}
 \cite{bao2012analysis,bao2019long}
Extending to $2$ and $3$ dimensions, the above Theorems are still valid under the conditions $0<h\lesssim \sqrt{C_{d}(h)},~0<\tau\lesssim \sqrt{C_{d}(h)}$. Besides, the inverse inequality becomes
\begin{align}
\|u^{\varepsilon,n}\|_{l^{\infty}}\lesssim \frac{1}{C_{d}(h)}\left(\|\delta_{x}^{+}u^{\varepsilon,n}\|_{l^{2}} + \|u^{\varepsilon,n}\|_{l^{2}} \right),
\end{align}
where $C_{d}(h)=1/|\ln h|$ when $d=2$ and when $d=3$, $C_{d}(h)=h^{1/2}$.
\end{remark}
%%%%%%%%%%%%%%%%%%
\subsection{Proof of Theorem \ref{CNFDerr} for the CNFD}
%%%%%%%%%%%%%%%%%
Define the local truncation error for the CNFD (\ref{CNFD}) as
\begin{align}\label{trunerre}
\begin{split}
\xi^{\varepsilon,0}_{j}:=&\delta_{t}^{+}u^{\varepsilon}(x_{j},0)-\gamma(x_{j})-\frac{\tau}{2}\left[ \delta^{2}_{x}\phi(x_{j})-\phi(x_{j})- \phi(x_{j})\ln(\varepsilon^{2}+(\phi(x_{j}))^{2})\right],\\
\xi^{\varepsilon,n}_{j}:=&\delta_{t}^{2}u^{\varepsilon}(x_{j},t_{n})-\frac{1}{2}\delta_{x}^{2} \big( u^{\varepsilon}(x_{j},t_{n+1})+u^{\varepsilon}(x_{j},t_{n-1}) \big)+\frac{1}{2} \big( u^{\varepsilon}(x_{j},t_{n+1})+u^{\varepsilon}(x_{j},t_{n-1}) \big)\\
&+G_{\varepsilon}\big(u^{\varepsilon}(x_{j},t_{n+1}),u^{\varepsilon}(x_{j},t_{n-1})\big),~~j\in \mathcal{T}_{N},~~n\geq1,
\end{split}
\end{align}
then we have the following bounds for the local truncation error.
\begin{lemma}\label{trunerrlem}
Under the assumption (A), we have
\begin{align}
&\|\xi^{\varepsilon,0}\|_{H^{1}}\lesssim h^{2}+\tau^{2},\\
&\|\xi^{\varepsilon, n}\|_{l^{2}}\lesssim h^{2}+\frac{\tau^{2}}{\varepsilon^{2}},\\
&\|\delta^{+}_{x}\xi^{\varepsilon, n}\|_{l^{2}}\lesssim h^{2}+\frac{\tau^{2}}{\varepsilon^{3}},~~1\leq n \leq \frac{T}{\tau}-1.
\end{align}
\end{lemma}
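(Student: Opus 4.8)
The plan is to substitute the exact solution $u^{\varepsilon}$ into the discrete operators that define $\xi^{\varepsilon,0}$ and $\xi^{\varepsilon,n}$ in (\ref{trunerre}), Taylor-expand each lattice value about the base point $(x_j,t_n)$ in both time and space, and use that $u^{\varepsilon}$ solves (\ref{RLogKGE1}) pointwise so that the leading terms cancel. What remains splits into purely \emph{linear} finite-difference remainders, whose sizes are governed by assumption (A) and carry no $\varepsilon$, and a single \emph{nonlinear} discrepancy that is the only place where negative powers of $\varepsilon$ enter. Concretely, for $n\ge 1$ I would write $\xi^{\varepsilon,n}_j = A_j+B_j+C_j+D_j$, where (after using the PDE to eliminate $\partial_{tt}u^{\varepsilon}$)
\begin{align*}
A_j &= \delta_t^2 u^{\varepsilon}(x_j,t_n)-\partial_{tt}u^{\varepsilon}(x_j,t_n),\quad
B_j = \partial_{xx}u^{\varepsilon}(x_j,t_n)-\tfrac12\delta_x^2\big(u^{\varepsilon}(x_j,t_{n+1})+u^{\varepsilon}(x_j,t_{n-1})\big),\\
C_j &= \tfrac12\big(u^{\varepsilon}(x_j,t_{n+1})+u^{\varepsilon}(x_j,t_{n-1})\big)-u^{\varepsilon}(x_j,t_n),\\
D_j &= G_{\varepsilon}\big(u^{\varepsilon}(x_j,t_{n+1}),u^{\varepsilon}(x_j,t_{n-1})\big)-u^{\varepsilon}(x_j,t_n)\ln\!\big(\varepsilon^2+u^{\varepsilon}(x_j,t_n)^2\big).
\end{align*}

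For $\xi^{\varepsilon,0}$ and for $A,B,C$: standard second-order centred-difference and time-averaging expansions give $A=O(\tau^2)$, $B=O(h^2+\tau^2)$, $C=O(\tau^2)$ with constants independent of $\varepsilon$; here assumption (A) is used exactly, since these remainders involve $\partial_t^4u^{\varepsilon}$, $\partial_{tt}\partial_{xx}u^{\varepsilon}$, $\partial_{tt}u^{\varepsilon}$, all $\lesssim 1$, and the corresponding $\delta_x^+$ estimates need the borderline $\partial_t^4\partial_x u^{\varepsilon}$ and $\partial_{tt}\partial_x^3u^{\varepsilon}$ with $r+s\le 5$. The $n=0$ bound is special and clean: expanding $\delta_t^+u^{\varepsilon}(x_j,0)=\gamma(x_j)+\tfrac{\tau}{2}\partial_{tt}u^{\varepsilon}(x_j,0)+O(\tau^2)$ and replacing $\partial_{tt}u^{\varepsilon}(x_j,0)$ by $\phi''(x_j)-\phi(x_j)-\phi(x_j)\ln(\varepsilon^2+\phi(x_j)^2)$, the logarithmic term appears \emph{identically} in the first-step formula (\ref{initialvalue}) with the same coefficient $\tfrac{\tau}{2}$ and cancels exactly; only $\tfrac{\tau}{2}[\phi''-\delta_x^2\phi]=O(\tau h^2)$ and an $O(\tau^2)$ time remainder survive, so $\|\xi^{\varepsilon,0}\|_{H^1}\lesssim h^2+\tau^2$ with no $\varepsilon$.

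The heart of the proof — and the main obstacle — is the nonlinear discrepancy $D$. Set $\bar u=\tfrac12(u^{\varepsilon}(t_{n+1})+u^{\varepsilon}(t_{n-1}))$ and $\delta=\tfrac12(u^{\varepsilon}(t_{n+1})-u^{\varepsilon}(t_{n-1}))=\tau\,\partial_t u^{\varepsilon}(t_n)+O(\tau^3)$. Using the definition (\ref{G}) one can write
\begin{align*}
G_{\varepsilon}\big(u^{\varepsilon}(t_{n+1}),u^{\varepsilon}(t_{n-1})\big)=\tilde H(\delta),\qquad
\tilde H(s)=\frac{\bar u}{2}\int_{-1}^{1}\ln\!\big(\varepsilon^2+\bar u^2+s^2+2\bar u s\phi\big)\,\mathrm{d}\phi .
\end{align*}
The symmetry $G_{\varepsilon}(z_1,z_2)=G_{\varepsilon}(z_2,z_1)$ makes $\tilde H$ \emph{even}, so $\tilde H'(0)=0$ and the dangerous $O(\tau)$ contribution is absent; a second-order expansion then gives $D=\tfrac12\tilde H''(\zeta)\delta^2+\big[\bar u\ln(\varepsilon^2+\bar u^2)-u^{\varepsilon}(t_n)\ln(\varepsilon^2+u^{\varepsilon}(t_n)^2)\big]$. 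Since $z\mapsto z\ln(\varepsilon^2+z^2)$ has derivative $O(|\ln\varepsilon|)$ and $\bar u-u^{\varepsilon}(t_n)=O(\tau^2)$, the bracket is $O(\tau^2|\ln\varepsilon|)$. For $\tilde H''$ I would differentiate under the integral; writing $R=\varepsilon^2+\bar u^2+s^2+2\bar u s\phi$ one has $\tilde H''(s)=\tfrac{\bar u}{2}\int_{-1}^{1}\frac{2R-4(s+\bar u\phi)^2}{R^2}\,\mathrm{d}\phi$, and the identity $R-(s+\bar u\phi)^2=\varepsilon^2+\bar u^2(1-\phi^2)\ge 0$ yields $(s+\bar u\phi)^2\le R$; combined with $R\ge\varepsilon^2$ and $|\bar u|\lesssim 1$ this gives $|\tilde H''|\lesssim\varepsilon^{-2}$, hence $D=O(\tau^2/\varepsilon^2)$ and $\|\xi^{\varepsilon,n}\|_{l^2}\lesssim h^2+\tau^2/\varepsilon^2$.

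Finally, for $\|\delta_x^+\xi^{\varepsilon,n}\|_{l^2}$ the linear pieces $\delta_x^+A,\delta_x^+B,\delta_x^+C$ remain $O(h^2+\tau^2)$ under (A), while $\delta_x^+D$ applies one more spatial derivative to the regularized nonlinearity: this differentiates the $R^{-1}$-type integrand once more and, since each $u$-derivative of $\ln(\varepsilon^2+u^2)$ near the zero set of $u^{\varepsilon}$ costs an extra factor $\varepsilon^{-1}$, raises the estimate to $O(\tau^2/\varepsilon^3)$, giving $\|\delta_x^+\xi^{\varepsilon,n}\|_{l^2}\lesssim h^2+\tau^2/\varepsilon^3$. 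I expect the delicate point throughout to be this bookkeeping of negative powers of $\varepsilon$ in $D$ and $\delta_x^+D$: a naive bound that replaces $(s+\bar u\phi)^2/R^2$ by $\varepsilon^{-4}$ loses two powers, so the cancellation $R-(s+\bar u\phi)^2=\varepsilon^2+\bar u^2(1-\phi^2)$ (and its analogue after one spatial differentiation) is exactly what must be exploited to reach the stated $\varepsilon^{-2}$ and $\varepsilon^{-3}$.
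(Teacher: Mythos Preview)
Your proposal is correct, and the overall skeleton---decompose $\xi^{\varepsilon,n}=A+B+C+D$ via the PDE, handle $A,B,C$ by standard centered-difference Taylor remainders under assumption (A), and treat $\xi^{\varepsilon,0}$ by exact cancellation of the logarithmic term against the first-step formula---is exactly what the paper does. Where you genuinely diverge is in the nonlinear piece $D$. The paper Taylor-expands $f_{\varepsilon}(\rho)=\ln(\varepsilon^2+\rho)$ about the base value $(u^{\varepsilon}(x_j,t_n))^2$, introduces the auxiliary time-increments $\Gamma_j^n$ and $\widetilde\Gamma_j^n$ of $|u^{\varepsilon}|^2$, and bounds the integral remainder by pure derivative-counting: $|f_{\varepsilon}'|\lesssim\varepsilon^{-1}$, $|f_{\varepsilon}''|\lesssim\varepsilon^{-2}$, $|f_{\varepsilon}'''|\lesssim\varepsilon^{-3}$. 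Your route instead exploits the swap symmetry $G_{\varepsilon}(z_1,z_2)=G_{\varepsilon}(z_2,z_1)$ to write $G_{\varepsilon}$ as an \emph{even} function $\tilde H(\delta)$ of the half-difference, which kills the $O(\tau)$ term structurally, and then gets the $\varepsilon^{-2}$ bound on $\tilde H''$ from the single algebraic identity $R-(s+\bar u\phi)^2=\varepsilon^2+\bar u^2(1-\phi^2)\ge 0$. Your argument is tidier for this specific logarithmic nonlinearity (the cancellation is one line rather than a page of bookkeeping), while the paper's is more mechanical and would transfer verbatim to any $f_{\varepsilon}\in C^3$ with the same derivative growth. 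Both reach the stated exponents, and for $\delta_x^+ D$ both arguments are at the level of a sketch, relying on the heuristic that one extra spatial derivative of the regularized logarithm costs one extra factor of $\varepsilon^{-1}$.
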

\begin{proof}
By (\ref{initialvalue}), it leads to
\begin{align}
|\xi^{\varepsilon,0}_{j}|\leq\frac{\tau^{2}}{6}\|\partial_{t}^{3}u^{\varepsilon}\|_{L^{\infty}}+\frac{\tau h}{6}\|\partial_{x}^{3}\phi\|_{L^{\infty}}\lesssim h^{2}+\tau^{2},
\end{align}
where the $L^{\infty}$-norm means $\|u^{\varepsilon}\|_{L^{\infty}}:=\sup\limits_{0\leq t\leq T}\sup\limits_{x\in \Omega}|u^{\varepsilon}(x,t)|$.
Similarly, we have
\begin{align}
|\delta_{x}^{+}\xi^{\varepsilon,0}_{j}|\leq\frac{\tau^{2}}{6}\|\partial_{tttx}u^{\varepsilon}\|_{L^{\infty}}+\frac{\tau h}{6}\|\partial_{x}^{4}\phi\|_{L^{\infty}}\lesssim h^{2}+\tau^{2},~~j\in \mathcal{T}_{N}.
\end{align}
Therefore,
\begin{align}
\|\xi^{\varepsilon,0}\|_{H^{1}}\lesssim h^{2}+\tau^{2}.
\end{align}
For $n\geq1$ and $j\in \mathcal{T}_{N}$, according to the Taylor expansion for the nonlinear part $G_{\varepsilon}$ at $(u^{\varepsilon}(x_{j},t_{n}))^{2}$ and noticing (\ref{G}), we denote
\begin{align}
\Gamma_{j}^{n} &:=\frac{1}{\tau}\left(\left|u^{\varepsilon}\left(x_{j}, t_{n+1}\right)\right|^{2}-\left|u^{\varepsilon}\left(x_{j}, t_{n}\right)\right|^{2}\right)=\int_{0}^{1} \partial_{t}\left(\left|u^{\varepsilon}\right|^{2}\right)\left(x_{j}, t_{n}+s \tau\right) \mathrm{d} s, \\
\widetilde{\Gamma}_{j}^{n} &:=\frac{2}{\tau^{2}}\left(\frac{1}{2}\left(\left|u^{\varepsilon}\left(x_{j}, t_{n+1}\right)\right|^{2}+\left|u^{\varepsilon}\left(x_{j}, t_{n-1}\right)\right|^{2}\right)-\left|u^{\varepsilon}\left(x_{j}, t_{n}\right)\right|^{2}\right) \\
&=\int_{0}^{1} \int_{-\theta}^{\theta} \partial_{t t}\left(\left|u^{\varepsilon}\right|^{2}\right)\left(x_{j}, t_{n}+s \tau\right) \mathrm{d} s \mathrm{d} \theta.\notag
\end{align}
Noticing that
\begin{align}
\begin{split}
\xi^{\varepsilon,n}_{j}:=&\delta_{t}^{2}u^{\varepsilon}(x_{j},t_{n})-\frac{1}{2}\delta_{x}^{2} \big( u^{\varepsilon}(x_{j},t_{n+1})+u^{\varepsilon}(x_{j},t_{n-1}) \big)+\frac{1}{2} \big( u^{\varepsilon}(x_{j},t_{n+1})+u^{\varepsilon}(x_{j},t_{n-1}) \big)\\
&+G_{\varepsilon}\big(u^{\varepsilon}(x_{j},t_{n+1}),u^{\varepsilon}(x_{j},t_{n-1})\big) \\
&-\Big[ \partial_{tt}u^{\varepsilon}(x_{j},t_{n})-\partial_{xx} u^{\varepsilon}(x_{j},t_{n})+ u^{\varepsilon}(x_{j},t_{n})+ u^{\varepsilon}(x_{j},t_{n})f_{\varepsilon}\left( ( u^{\varepsilon}(x_{j},t_{n}))^{2} \right)\Big]\\
=&\left[ \delta_{t}^{2}u^{\varepsilon}(x_{j},t_{n})-\partial_{tt}u^{\varepsilon}(x_{j},t_{n})\right] - \left[\frac{1}{2}\delta_{x}^{2} \left( u^{\varepsilon}(x_{j},t_{n+1})+u^{\varepsilon}(x_{j},t_{n-1}) \right)-\partial_{xx}u^{\varepsilon}(x_{j},t_{n}) \right]\\
&+\frac{1}{2} \big( u^{\varepsilon}(x_{j},t_{n+1})+u^{\varepsilon}(x_{j},t_{n-1}) \big) -u^{\varepsilon}(x_{j},t_{n})\\
&+\Big[ G_{\varepsilon}\big(u^{\varepsilon}(x_{j},t_{n+1}),u^{\varepsilon}(x_{j},t_{n-1})\big) - u^{\varepsilon}(x_{j},t_{n})f_{\varepsilon}\left( ( u^{\varepsilon}(x_{j},t_{n}))^{2} \right) \Big],
\end{split}
\end{align}
\begin{align}
\begin{split}
 &G_{\varepsilon}\big(u^{\varepsilon}(x_{j},t_{n+1}),u^{\varepsilon}(x_{j},t_{n-1})\big) - u^{\varepsilon}(x_{j},t_{n})f_{\varepsilon}\left( ( u^{\varepsilon}(x_{j},t_{n}))^{2} \right)\\
 =&\int_{0}^{1} f_{\varepsilon}\Big(\theta \big(u^{\varepsilon}(x_{j},t_{n+1})\big)^{2}+(1-\theta)\big(u^{\varepsilon}(x_{j},t_{n-1})\big)^{2}\Big)\mathrm{d} \theta\cdot \frac{u^{\varepsilon}(x_{j},t_{n+1})+u^{\varepsilon}(x_{j},t_{n-1})}{2}\\
&- \int_{0}^{1}f_{\varepsilon}\big( \left( u^{\varepsilon}(x_{j},t_{n})\right)^{2} \big) \mathrm{d}\theta\cdot u^{\varepsilon}(x_{j},t_{n})\\
=&\int_{0}^{1} f_{\varepsilon}\Big(\theta \big(u^{\varepsilon}(x_{j},t_{n+1})\big)^{2}+(1-\theta)\big(u^{\varepsilon}(x_{j},t_{n-1})\big)^{2}\Big)\mathrm{d} \theta\cdot \frac{u^{\varepsilon}(x_{j},t_{n+1})+u^{\varepsilon}(x_{j},t_{n-1})}{2}\\
&- \int_{0}^{1} f_{\varepsilon}\left( (u^{\varepsilon}(x_{j},t_{n}))^{2}\right)\mathrm{d} \theta\cdot \frac{u^{\varepsilon}(x_{j},t_{n+1})+u^{\varepsilon}(x_{j},t_{n-1})}{2}\\
&+ f_{\varepsilon}\left( (u^{\varepsilon}(x_{j},t_{n}))^{2}\right)\cdot \frac{u^{\varepsilon}(x_{j},t_{n+1})+u^{\varepsilon}(x_{j},t_{n-1})}{2}-f_{\varepsilon}\left( ( u^{\varepsilon}(x_{j},t_{n}))^{2} \right)\cdot u^{\varepsilon}(x_{j},t_{n}),
\end{split}
\end{align}
\begin{align}
\begin{split}
&\int_{0}^{1} f_{\varepsilon}\Big(\theta \big(u^{\varepsilon}(x_{j},t_{n+1})\big)^{2}+(1-\theta)\big(u^{\varepsilon}(x_{j},t_{n-1})\big)^{2}\Big)\mathrm{d} \theta\\
=&\int_{0}^{1}\bigg[f_{\varepsilon}\big( (u^{\varepsilon}(x_{j},t_{n}))^{2}\big) + \Big[ \theta \big(u^{\varepsilon}(x_{j},t_{n+1})\big)^{2} + (1-\theta) \big(u^{\varepsilon}(x_{j},t_{n-1})\big)^{2} - \big(u^{\varepsilon}(x_{j},t_{n})\big)^{2}\Big]f^{\prime}_{\varepsilon}\big( (u^{\varepsilon}(x_{j},t_{n}))^{2}\big)\bigg]\mathrm{d} \theta\\
&  + \int_{0}^{1}\int_{(u^{\varepsilon}(x_{j},t_{n}))^{2}}^{\theta(u^{\varepsilon}(x_{j},t_{n+1}))^{2}+(1-\theta)(u^{\varepsilon}(x_{j},t_{n-1}))^{2}} \bigg[\Big(\theta (u^{\varepsilon}(x_{j},t_{n+1}))^{2} + (1-\theta) (u^{\varepsilon}(x_{j},t_{n-1}))^{2} \Big)f^{\prime\prime}_{\varepsilon}( s) \bigg] \mathrm{d} s\mathrm{d} \theta\\
=&f_{\varepsilon}\big( (u^{\varepsilon}(x_{j},t_{n}))^{2}\big) +\frac{\tau^{2}}{2}f^{\prime}_{\varepsilon}\big(u^{\varepsilon}(x_{j},t_{n})\big)\widetilde{\Gamma}^{n}_{j}\\
&+\int_{0}^{1}\int_{(u^{\varepsilon}(x_{j},t_{n}))^{2}}^{\theta(u^{\varepsilon}(x_{j},t_{n+1}))^{2}+(1-\theta)(u^{\varepsilon}(x_{j},t_{n-1}))^{2}} \bigg[\Big(\theta (u^{\varepsilon}(x_{j},t_{n+1}))^{2} + (1-\theta) (u^{\varepsilon}(x_{j},t_{n-1}))^{2} \Big)f^{\prime\prime}_{\varepsilon}( s) \bigg] \mathrm{d} s\mathrm{d} \theta\\
=&f_{\varepsilon}\big( (u^{\varepsilon}(x_{j},t_{n}))^{2}\big) +\frac{\tau^{2}}{2}f^{\prime}_{\varepsilon}(u^{\varepsilon}(x_{j},t_{n}))\widetilde{\Gamma}^{n}_{j}\\
&+\int_{0}^{1} \int_{0}^{1} \bigg[\theta \big(u^{\varepsilon}(x_{j},t_{n+1})\big)^{2} + (1-\theta) \big(u^{\varepsilon}(x_{j},t_{n-1})\big)^{2} -\big(u^{\varepsilon}(x_{j},t_{n})\big)^{2}\bigg]\bigg[\theta \big(u^{\varepsilon}(x_{j},t_{n+1})\big)^{2} + (1-\theta) \big(u^{\varepsilon}(x_{j},t_{n-1})\big)^{2} \\
&-\delta \Big(\theta \big(u^{\varepsilon}(x_{j},t_{n+1})\big)^{2} + (1-\theta) \big(u^{\varepsilon}(x_{j},t_{n-1})\big)^{2}\Big) -(1-\delta)\big(u^{\varepsilon}(x_{j},t_{n})\big)^{2} \bigg]f^{\prime\prime}_{\varepsilon}(\xi_{j}(\theta,\delta))\mathrm{d} \delta \mathrm{d} \theta\\
=&f_{\varepsilon}\big( (u^{\varepsilon}(x_{j},t_{n}))^{2}\big) +\frac{\tau^{2}}{2}f^{\prime}_{\varepsilon}\big(u^{\varepsilon}(x_{j},t_{n})\big)\widetilde{\Gamma}^{n}_{j}\\
&+\tau^{2} \int_{0}^{1}\int_{0}^{1}(1-\delta) \big(\theta \Gamma^{n}_{j}-(1-\theta)\Gamma^{n-1}_{j}\big)^{2}f^{\prime\prime}_{\varepsilon}(\xi_{j}(\theta,\delta))\mathrm{d} \delta \mathrm{d} \theta.
\end{split}
\end{align}
So, we have
\begin{align}
\begin{split}
 &G_{\varepsilon}\big(u^{\varepsilon}(x_{j},t_{n+1}),u^{\varepsilon}(x_{j},t_{n-1})\big) - u^{\varepsilon}(x_{j},t_{n})f_{\varepsilon}\big( ( u^{\varepsilon}(x_{j},t_{n}))^{2} \big)\\
 =&\bigg[\frac{\tau^{2}}{2}f^{\prime}_{\varepsilon}\big(u^{\varepsilon}(x_{j},t_{n})\big)\widetilde{\Gamma}^{n}_{j}\\
&+\tau^{2} \int_{0}^{1}\int_{0}^{1}(1-\delta) \big(\theta \Gamma^{n}_{j}-(1-\theta)\Gamma^{n-1}_{j}\big)^{2}f^{\prime\prime}_{\varepsilon}\big(\xi_{j}(\theta,\delta)\big)\mathrm{d} \delta \mathrm{d} \theta \bigg]\cdot \frac{1}{2}\big(u^{\varepsilon}(x_{j},t_{n+1})+u^{\varepsilon}(x_{j},t_{n-1})\big)\\
&+\frac{ \tau^{2}}{2} f_{\varepsilon}\big((u^{\varepsilon}(x_{j},t_{n}))^{2}\big)\int_{-1}^{1}(1-|s|)\partial_{tt}u^{\varepsilon}(x_{j},t_{n}+s\tau)\mathrm{d}s.
\end{split}
\end{align}
Taking the Taylor expansion, we obtain
\begin{align}
\xi^{\varepsilon,n}_{j}=\frac{\tau^{2}}{12}\alpha^{\varepsilon,n}_{j}+\frac{\tau^{2}}{2}\beta^{\varepsilon,n}_{j}+\frac{h^{2}}{12}\eta^{\varepsilon,n}_{j}+\frac{\tau^{2}}{2}\phi^{\varepsilon,n}_{j}+\frac{\tau^{2}}{2}\psi^{\varepsilon,n}_{j},
\end{align}
where
\begin{align*}
\alpha^{\varepsilon,n}_{j}=&\int^{1}_{-1}(1-|s|)^{3}\partial^{4}_{t} u^{\varepsilon}(x_{j},t_{n}+s\tau)\mathrm{d}s,~~\beta^{\varepsilon,n}_{j}=\int^{1}_{-1}(1-|s|)\partial^{2}_{t}\partial^{2}_{x} u^{\varepsilon}(x_{j},t_{n}+s\tau)\mathrm{d}s,\\
\eta^{\varepsilon,n}_{j}=&\int^{1}_{-1}(1-|s|)^{3}\left( \partial^{4}_{x} u^{\varepsilon}(x_{j}+sh,t_{n+1})+ \partial^{4}_{x} u^{\varepsilon}(x_{j}+sh,t_{n-1}) \right) \mathrm{d}s,\\
\phi^{\varepsilon,n}_{j}=&\int^{1}_{-1}(1-|s|)\partial^{2}_{t} u^{\varepsilon}(x_{j},t_{n}+s\tau)\mathrm{d}s,\\
\psi^{\varepsilon,n}_{j}=&\bigg[f^{\prime}_{\varepsilon}\big(u^{\varepsilon}(x_{j},t_{n})\big)\widetilde{\Gamma}^{n}_{j}+2\int_{0}^{1}\int_{0}^{1}(1-\delta) \big(\theta \Gamma^{n}_{j}-(1-\theta)\Gamma^{n-1}_{j}\big)^{2}f^{\prime\prime}_{\varepsilon}(\xi_{j}(\theta,\delta))\mathrm{d} \delta \mathrm{d} \theta \bigg]\cdot \frac{1}{2}\big(u^{\varepsilon}(x_{j},t_{n+1})\\
&+u^{\varepsilon}(x_{j},t_{n-1})\big)+ f_{\varepsilon}\big((u^{\varepsilon}(x_{j},t_{n}))^{2}\big)\int_{-1}^{1}(1-|s|)\partial_{tt}u^{\varepsilon}(x_{j},t_{n}+s\tau)\mathrm{d}s.
\end{align*}
Under the assumption (A), by using the triangle inequality, noticing $f_{\varepsilon}\in C^{2}([0,\infty))$, and
\begin{align*}
&|f_{\varepsilon}|=|\ln(\varepsilon^{2}+(u^{\varepsilon})^{2})|\leq \ln\frac{1}{\varepsilon^{2}},\\
&|f_{\varepsilon}^{\prime}|=\left|\frac{2u^{\varepsilon}}{\varepsilon^{2}+(u^{\varepsilon})^{2}}\right|\leq\frac{1}{\varepsilon}, \\ &|f_{\varepsilon}^{\prime\prime}|=\left|\frac{2(\varepsilon^{2}-(u^{\varepsilon})^{2})}{\left(\varepsilon^{2}+(u^{\varepsilon})^{2}\right)^{2}}\right|\lesssim \frac{1}{\varepsilon^{2}}.%\max\left\{\ln\frac{1}{\varepsilon^{2}},|\ln(\varepsilon^{2}+\Lambda^{2})|\right\}=\ln\frac{1}{\varepsilon^{2}},
\end{align*}

We have
\begin{align}
\begin{split}
|\xi^{\varepsilon,n}_{j}|\lesssim &h^{2}\| \partial^{4}_{x} u^{\varepsilon}\|_{L^{\infty}}\\
&+\tau^{2}\bigg[\|\partial^{4}_{t} u^{\varepsilon}\|_{L^{\infty}}+\|\partial^{2}_{x}\partial^{2}_{t} u^{\varepsilon}\|_{L^{\infty}}+\|\partial^{2}_{t} u^{\varepsilon}\|_{L^{\infty}}+\|\partial^{2}_{t}u^{\varepsilon}\|_{L_{\infty}}\|f_{\varepsilon}((u^{\varepsilon})^{2})\|_{L^{\infty}}\\
&~~~~~~+\Big(\|\partial_{t}(u^{\varepsilon})^{2}\|_{L^{\infty}}\|f^{\prime\prime}_{\varepsilon}((u^{\varepsilon})^{2})\|_{L^{\infty}}+\|\partial_{tt}(u^{\varepsilon})^{2}\|_{L^{\infty}}\|f^{\prime}_{\varepsilon}((u^{\varepsilon})^{2})\|_{L^{\infty}}\Big)\|u^{\varepsilon}\|_{L^{\infty}}\bigg]\\
\lesssim &h^{2}+\frac{\tau^{2}}{\varepsilon^{2}}.
\end{split}
\end{align}
Noticing $f_{\varepsilon}\in C^{3}([0,\infty)),f_{\varepsilon}^{\prime\prime\prime}=\frac{-12(u^{\varepsilon})^{3}+4u^{\varepsilon}\varepsilon^{2}}{(\varepsilon^{2}+(u^{\varepsilon})^{2})^{3}} ,$ we have $|f_{\varepsilon}^{\prime\prime\prime}|\lesssim \frac{1}{\varepsilon^{3}} $. With the similar method, we have
\begin{align}
\begin{split}
|\delta_{x}^{+}\xi^{\varepsilon,n}_{j}|\lesssim &h^{2}\| \partial^{5}_{x} u^{\varepsilon}\|_{L^{\infty}}\\
&+\tau^{2}\Bigg[\|\partial^{4}_{t}\partial_{x} u^{\varepsilon}\|_{L^{\infty}}+\|\partial^{3}_{x}\partial^{2}_{t} u^{\varepsilon}\|_{L^{\infty}}+\|\partial^{2}_{t}\partial_{x} u^{\varepsilon}\|_{L^{\infty}}\\
&+\bigg(\|\partial^{2}_{t}u^{\varepsilon}\|_{L^{\infty}}\|f^{\prime}_{\varepsilon}((u^{\varepsilon})^{2})\|_{L^{\infty}}+\big(\|\partial_{t}(u^{\varepsilon})^{2}\|_{L^{\infty}}\|f^{\prime\prime\prime}_{\varepsilon}((u^{\varepsilon})^{2})\|_{L^{\infty}}\\
&~~~~+\|\partial_{tt}(u^{\varepsilon})^{2}\|_{L^{\infty}}\|f^{\prime\prime}_{\varepsilon}((u^{\varepsilon})^{2})\|_{L^{\infty}}\big)\|u^{\varepsilon}\|_{L^{\infty}}\bigg)\cdot\|\partial_{x} (u^{\varepsilon})^{2}\|_{L^{\infty}}\\
&+\|\partial^{2}_{t}\partial_{x}u^{\varepsilon}\|_{L^{\infty}}\|f_{\varepsilon}((u^{\varepsilon})^{2})\|_{L^{\infty}}\\
&+\bigg(\|\partial_{tx}(u^{\varepsilon})^{2}\|_{L^{\infty}}\|f^{\prime\prime}_{\varepsilon}((u^{\varepsilon})^{2})\|_{L^{\infty}}+\|\partial_{ttx}(u^{\varepsilon})^{2}\|_{L^{\infty}}\|f^{\prime}_{\varepsilon}((u^{\varepsilon})^{2})\|_{L^{\infty}}\bigg)\cdot\|u^{\varepsilon}\|_{L^{\infty}}\\
&+\bigg(\|\partial_{t}(u^{\varepsilon})^{2}\|_{L^{\infty}}\|f^{\prime\prime}_{\varepsilon}((u^{\varepsilon})^{2})\|_{L^{\infty}}+\|\partial_{tt}(u^{\varepsilon})^{2}\|_{L^{\infty}}\|f^{\prime}_{\varepsilon}((u^{\varepsilon})^{2})\|_{L^{\infty}}\bigg)\cdot\|\partial_{x}u^{\varepsilon}\|_{L^{\infty}}\Bigg]\\
\lesssim &h^{2}+\tau^{2}\max\left\{\ln\left(\frac{1}{\varepsilon^{2}}\right), \frac{1}{\varepsilon^{2}},\frac{1}{\varepsilon^{3}}\right\}\\
\lesssim &h^{2}+\frac{\tau^{2}}{\varepsilon^{3}}.
\end{split}
\end{align}

This ends the proof.
\end{proof}

For the CNFD (\ref{CNFD}), we establish the error estimates in Theorem \ref{CNFDerr}. The proof is different from the schemes of the EFD and the SIFD \cite{yan2020regularized} of the RLogKGE (\ref{RLogKGE}). The main difficulty of the proof are dealing with the nonlinearity and bounding the numerical solution $u^{\varepsilon,n}$, i.e., $\|u^{\varepsilon,n}\|_{l^{\infty}}\lesssim 1$. Following the idea in \cite{akrivis1991fully,bao2013optimal,bao2012uniform}, we truncate the nonlinearity $f_{\varepsilon}$ to a global Lipschitz function with compact support in $d$-dimensions ($d=1,2,3$). And the error can be obtained if the numerical solution is close to the bounded continuous solution. In this paper, we apply the same idea. Choosing a smooth function $\rho(s)\in C^{\infty}(\mathbb{R})$ such that
\begin{align*}
%&M_{0}=\max\left\{ \sup\|u^{\varepsilon}(x,t)\|_{L^{\infty}(\Omega_{T})} \right\},\\
&\rho(s)=\left\{\begin{array}{ll}
{1,} & {0 \leq|s| \leq 1}, \\
{\in[0,1],} & {1 \leq|s| \leq 2}, \\
{0,} & {|s| \geq 2}.
\end{array}\right.
\end{align*}
Denote $B=(\Lambda+1)^{2}, ~f_{B}(s)=f_{\varepsilon}(s)\rho(s/B), ~ F_{B}(s)=\int^{s}_{0}f_{B}(\sigma)\mathrm{d}\sigma,~ \rho_{_B}(s)=\rho(s/B)$, where $s\geq0,~s\in \mathbb{R}$. Then $f_{B}(s),~F_{B}(s)$ have compact support and are smooth, global Lipschitz continous, i.e., there exists $C_{B}=\|f^{\prime}_{B}\|_{L^{\infty}}$ , such that
\begin{align}
|f_{B}(s_{1})-f_{B}(s_{2})|\leq C_{B}|\sqrt{s_{1}}-\sqrt{s_{2}}|,~~\forall s_{1},s_{2}\geq0,~s_{1},s_{2} \in \mathbb{R}.
\end{align}

Let $\hat{u}^{\varepsilon,0}=u^{\varepsilon,0},~\hat{u}^{\varepsilon,1}=u^{\varepsilon,1}$ and we determine $\hat{u}^{\varepsilon,n+1} \in X_{N},~\text{for}~ n\geq1$ by
\begin{align}\label{CNFDB}
\delta_{t}^{2}\hat{u}^{\varepsilon,n}_{j}-\frac{1}{2}\delta_{x}^{2} (\hat{u}^{\varepsilon,n+1}_{j}+\hat{u}^{\varepsilon,n-1}_{j})+\frac{1}{2} (\hat{u}^{\varepsilon,n+1}_{j}+\hat{u}^{\varepsilon,n-1}_{j})+ G_{_B}(\hat{u}^{\varepsilon,n+1}_{j},\hat{u}^{\varepsilon,n-1}_{j})=0,~~ j\in \mathcal{T}_{N};
\end{align}
where $G_{_B}(z_{1},z_{2})$ for $z_{1},z_{2}\in \mathbb{R}$ is
\begin{align}
\begin{aligned}\label{G_{_B}}
G_{_B}\left(z_{1}, z_{2}\right) &=\int_{0}^{1} f_{B}\left( \theta z_{1}^{2}+(1-\theta)z_{2}^{2}\right) \mathrm{d} \theta \cdot g_{_B}\left(\frac{z_{1}+z_{2}}{2}\right) \\
&=\frac{F_{B}\left(z_{1}^{2}\right)-F_{B}\left(z_{2}^{2}\right)}{z_{1}^{2}-z_{2}^{2}} \cdot g_{_B}\left(\frac{z_{1}+z_{2}}{2}\right),
\end{aligned}
\end{align}
and $\hat{u}^{\varepsilon,n}_{j}$ can be viewed as another approximation of $u^{\varepsilon}(x,t)$. According to Lemma \ref{CNFDsol}, (\ref{CNFDB}) is uniquely solvable for small $\tau$. Denote the error $\chi^{\varepsilon,n}$ for $n\geq1$ as
\begin{align}\label{errorhat}
\chi^{\varepsilon,n}_{j}=u^{\varepsilon}(x_{j},t_{n})-\hat{u}^{\varepsilon,n}_{j}.
\end{align}
We can get the following estimates:
\begin{theorem}\label{CNFDBerr}
Under the assumption (A), there exist $h_{0}>0,\tau_{0}>0$  sufficiently small and independent of $\varepsilon,$ for any $0<\varepsilon \ll1$, when $0<h\leq h_{0}$ and $0<\tau\leq \tau_{0}$, the CNFD (\ref{CNFDB}) with (\ref{initialvalue1}) and (\ref{initialvalue}) satisfies the following error estimates
\begin{align}\label{CNFDBerror}
\|\delta_{x}^{+}\chi^{\varepsilon,n}\|_{l^{2}} + \|\chi^{\varepsilon,n}\|_{l^{2}}\lesssim e^{\frac{T}{2\varepsilon}}\left(h^{2}+\frac{\tau^{2}}{\varepsilon^{2}}\right),~~\|\hat{u}^{\varepsilon,n}\|_{l^{\infty}}\leq \Lambda+1.
\end{align}
\end{theorem}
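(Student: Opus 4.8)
The plan is to sidestep the circular difficulty of the genuine scheme by proving the estimate for the globally truncated scheme (\ref{CNFDB}), whose nonlinearity $G_{_B}$ is \emph{globally} Lipschitz; this is precisely what lets the energy argument close without any a priori $l^{\infty}$ control on the discrete solution. First I would set up the error equation for $\chi^{\varepsilon,n}$. Since $\|u^{\varepsilon}\|_{L^{\infty}(\Omega_{T})}=\Lambda$ and $B=(\Lambda+1)^{2}$, the cut-off satisfies $\rho_{_B}((u^{\varepsilon})^{2})\equiv1$ along the exact solution, so $G_{_B}$ coincides with $G_{\varepsilon}$ there and the consistency defect is exactly the $\xi^{\varepsilon,n}$ of (\ref{trunerre}); Lemma \ref{trunerrlem} then supplies $\|\xi^{\varepsilon,n}\|_{l^{2}}\lesssim h^{2}+\tau^{2}/\varepsilon^{2}$ together with $\|\xi^{\varepsilon,0}\|_{H^{1}}\lesssim h^{2}+\tau^{2}$ for the starting step. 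Subtracting (\ref{CNFDB}) from the identity satisfied by $u^{\varepsilon}(x_{j},t_{n})$ gives, for $n\ge1$,
\[
\delta_{t}^{2}\chi^{\varepsilon,n}_{j}-\tfrac12\delta_{x}^{2}(\chi^{\varepsilon,n+1}_{j}+\chi^{\varepsilon,n-1}_{j})+\tfrac12(\chi^{\varepsilon,n+1}_{j}+\chi^{\varepsilon,n-1}_{j})+\mathcal{N}^{\varepsilon,n}_{j}=\xi^{\varepsilon,n}_{j},
\]
where $\mathcal{N}^{\varepsilon,n}_{j}:=G_{_B}(u^{\varepsilon}(x_{j},t_{n+1}),u^{\varepsilon}(x_{j},t_{n-1}))-G_{_B}(\hat u^{\varepsilon,n+1}_{j},\hat u^{\varepsilon,n-1}_{j})$.

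Next I would run the discrete energy method. Pairing the error equation in the discrete $l^{2}$ inner product with $2\tau\,\delta_{t}\chi^{\varepsilon,n}=\chi^{\varepsilon,n+1}-\chi^{\varepsilon,n-1}$, summing over $j$, and using Lemma \ref{innerproduct} telescopes the second-order difference and mass terms into the discrete error energy
\[
S^{\varepsilon,n}:=\|\delta_{t}^{+}\chi^{\varepsilon,n}\|^{2}_{l^{2}}+\tfrac12(\|\delta_{x}^{+}\chi^{\varepsilon,n+1}\|^{2}_{l^{2}}+\|\delta_{x}^{+}\chi^{\varepsilon,n}\|^{2}_{l^{2}})+\tfrac12(\|\chi^{\varepsilon,n+1}\|^{2}_{l^{2}}+\|\chi^{\varepsilon,n}\|^{2}_{l^{2}}),
\]
leaving the identity $S^{\varepsilon,n}-S^{\varepsilon,n-1}=2\tau(\xi^{\varepsilon,n}-\mathcal{N}^{\varepsilon,n},\delta_{t}\chi^{\varepsilon,n})$. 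The truncation contribution is controlled by Cauchy--Schwarz as $\tau\|\xi^{\varepsilon,n}\|^{2}_{l^{2}}+\tau\|\delta_{t}\chi^{\varepsilon,n}\|^{2}_{l^{2}}\lesssim\tau\|\xi^{\varepsilon,n}\|^{2}_{l^{2}}+\tau(S^{\varepsilon,n}+S^{\varepsilon,n-1})$, so that only the $l^{2}$ bound on $\xi^{\varepsilon,n}$ enters and the rate $h^{2}+\tau^{2}/\varepsilon^{2}$ survives. For the nonlinear term I would invoke the global Lipschitz property in the form $\|\mathcal{N}^{\varepsilon,n}\|_{l^{2}}\le C_{B}(\|\chi^{\varepsilon,n+1}\|_{l^{2}}+\|\chi^{\varepsilon,n-1}\|_{l^{2}})$.

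The hard part will be extracting the \emph{sharp} $\varepsilon$-dependence. The naive Young splitting $2\tau(\mathcal{N}^{\varepsilon,n},\delta_{t}\chi^{\varepsilon,n})\le\tau\|\mathcal{N}^{\varepsilon,n}\|^{2}_{l^{2}}+\tau\|\delta_{t}\chi^{\varepsilon,n}\|^{2}_{l^{2}}$ would introduce $C_{B}^{2}$ into the Gronwall constant and yield the far weaker factor $e^{CT/\varepsilon^{2}}$. Instead I would estimate $2\tau|(\mathcal{N}^{\varepsilon,n},\delta_{t}\chi^{\varepsilon,n})|\lesssim\tau C_{B}(S^{\varepsilon,n}+S^{\varepsilon,n-1})$ \emph{directly}, keeping $C_{B}$ to first power. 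Because $f_{\varepsilon}(s)=\ln(\varepsilon^{2}+s)$ gives $\tfrac{d}{dz}f_{\varepsilon}(z^{2})=2z/(\varepsilon^{2}+z^{2})$ with maximum $1/\varepsilon$ at $z=\varepsilon$, the effective Lipschitz constant of $G_{_B}$ in its arguments is $C_{B}\sim1/\varepsilon$, which is exactly what produces the factor $e^{T/(2\varepsilon)}$. Collecting the pieces gives $S^{\varepsilon,n}-S^{\varepsilon,n-1}\lesssim\tau(1+C_{B})(S^{\varepsilon,n}+S^{\varepsilon,n-1})+\tau\|\xi^{\varepsilon,n}\|^{2}_{l^{2}}$; summation and the discrete Gronwall inequality, together with $S^{\varepsilon,0}\lesssim(h^{2}+\tau^{2})^{2}$ from (\ref{initialvalue}), yield $S^{\varepsilon,n}\lesssim e^{T/\varepsilon}(h^{2}+\tau^{2}/\varepsilon^{2})^{2}$, and taking square roots gives the claimed $\|\delta_{x}^{+}\chi^{\varepsilon,n}\|_{l^{2}}+\|\chi^{\varepsilon,n}\|_{l^{2}}\lesssim e^{T/(2\varepsilon)}(h^{2}+\tau^{2}/\varepsilon^{2})$.

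Finally, the $l^{\infty}$ bound is obtained a posteriori from the $H^{1}$ estimate: the one-dimensional discrete Sobolev (inverse) inequality gives $\|\chi^{\varepsilon,n}\|_{l^{\infty}}\lesssim\|\delta_{x}^{+}\chi^{\varepsilon,n}\|_{l^{2}}+\|\chi^{\varepsilon,n}\|_{l^{2}}$, hence $\|\hat u^{\varepsilon,n}\|_{l^{\infty}}\le\|u^{\varepsilon}(\cdot,t_{n})\|_{l^{\infty}}+\|\chi^{\varepsilon,n}\|_{l^{\infty}}\le\Lambda+Ce^{T/(2\varepsilon)}(h^{2}+\tau^{2}/\varepsilon^{2})\le\Lambda+1$ once $h\le h_{0}$ and $\tau\le\tau_{0}$ are small enough. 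I want to stress that this step is \emph{not} circular precisely because $C_{B}$ is a fixed constant: for the untruncated nonlinearity $f_{\varepsilon}$ one cannot bound $\mathcal{N}^{\varepsilon,n}$ without first knowing $\|u^{\varepsilon,n}\|_{l^{\infty}}$, whereas here the energy/Gronwall argument runs unconditionally and the $l^{\infty}$ bound falls out at the end. This is exactly the mechanism that subsequently permits the identification $\hat u^{\varepsilon,n}=u^{\varepsilon,n}$ (the cut-off being inactive) and thereby transfers the estimate to the genuine CNFD in Theorem \ref{CNFDerr}.
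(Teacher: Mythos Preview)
Your argument is correct and matches the paper's proof: the same error energy $S^{\varepsilon,n}$ (denoted $\hat E^{n}$ there), testing the error equation against $\chi^{\varepsilon,n+1}-\chi^{\varepsilon,n-1}$, the $O(1/\varepsilon)$ Lipschitz bound on $G_{_B}$ (which the paper establishes in detail as Lemma~\ref{nonerrBlem}), discrete Gronwall, and finally the one-dimensional discrete Sobolev inequality to recover $\|\hat u^{\varepsilon,n}\|_{l^{\infty}}\le\Lambda+1$. Your explicit care in keeping $C_{B}\sim1/\varepsilon$ to the \emph{first} power in the Gronwall prefactor is in fact cleaner than the paper's written argument, which applies a Young splitting that on its face produces $\|\hat\eta^{\varepsilon,n}\|_{l^{2}}^{2}\lesssim\varepsilon^{-2}(\|\chi^{\varepsilon,n+1}\|_{l^{2}}^{2}+\|\chi^{\varepsilon,n-1}\|_{l^{2}}^{2})$ yet records the resulting coefficient as $1/\varepsilon$ without further comment; your weighted Cauchy--Schwarz route makes transparent why $e^{T/(2\varepsilon)}$ rather than $e^{CT/\varepsilon^{2}}$ is the right outcome.
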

Define the local truncation error $\hat{\xi}^{\varepsilon,n}_{j}\in X_{N}$ of (\ref{CNFDB}) for $j\in \mathcal{T}_{N},~n\geq1$ as
\begin{align}\label{trunerreB}
\begin{split}
\hat{\xi}^{\varepsilon,0}_{j}:=&\delta_{t}^{+}u^{\varepsilon}(x_{j},0)-\gamma(x_{j})-\frac{\tau}{2}\big[ \delta^{2}_{x}\phi(x_{j})-\phi(x_{j})- \phi(x_{j})\ln(\varepsilon^{2}+(\phi(x_{j}))^{2})\big],\\
\hat{\xi}^{\varepsilon,n}_{j}:=&\delta_{t}^{2}u^{\varepsilon}(x_{j},t_{n})-\frac{1}{2}\delta_{x}^{2} \big( u^{\varepsilon}(x_{j},t_{n+1})+u^{\varepsilon}(x_{j},t_{n-1}) \big)+\frac{1}{2} \big( u^{\varepsilon}(x_{j},t_{n+1})+u^{\varepsilon}(x_{j},t_{n-1}) \big)\\
&+G_{_B}\big(u^{\varepsilon}(x_{j},t_{n+1}),u^{\varepsilon}(x_{j},t_{n-1})\big).
\end{split}
\end{align}
Similar to Lemma \ref{trunerrlem}, we have the following local truncation error estimates and we omit the proof.
\begin{lemma}\label{trunerrBlem}
Under assumption (A), we have
\begin{align}
&\|\hat{\xi}^{\varepsilon,0}\|_{H^{1}}\lesssim h^{2}+\tau^{2},\\
&\|\hat{\xi}^{\varepsilon, n}\|_{l^{2}}\lesssim h^{2}+\frac{\tau^{2}}{\varepsilon^{2}},\\
&\|\delta^{+}_{x}\hat{\xi}^{\varepsilon, n}\|_{l^{2}}\lesssim h^{2}+\frac{\tau^{2}}{\varepsilon^{3}},~~1\leq n \leq \frac{T}{\tau}-1.
\end{align}
\end{lemma}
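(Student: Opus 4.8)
The plan is to reduce Lemma \ref{trunerrBlem} entirely to Lemma \ref{trunerrlem} by showing that the cut-off nonlinearity is dormant along the exact solution, so that the local truncation error $\hat{\xi}^{\varepsilon,n}$ of the truncated scheme (\ref{CNFDB}) coincides pointwise with the truncation error $\xi^{\varepsilon,n}$ of the original scheme already analyzed. Comparing the defining formulas (\ref{trunerre}) and (\ref{trunerreB}), the only difference between $\xi^{\varepsilon,n}_{j}$ and $\hat{\xi}^{\varepsilon,n}_{j}$ for $n\geq1$ is that the consistent nonlinear term $G_{\varepsilon}(u^{\varepsilon}(x_{j},t_{n+1}),u^{\varepsilon}(x_{j},t_{n-1}))$ is replaced by its truncated counterpart $G_{_B}(u^{\varepsilon}(x_{j},t_{n+1}),u^{\varepsilon}(x_{j},t_{n-1}))$; for $n=0$ the two initial truncation errors are written identically and involve no cut-off at all, so that part of the estimate is literally the $n=0$ bound of Lemma \ref{trunerrlem}.

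First I would record the elementary but decisive observation that the cut-off never activates on exact-solution values. Since $\Lambda=\|u^{\varepsilon}\|_{L^{\infty}(\Omega_{T})}$ and $B=(\Lambda+1)^{2}$, every argument fed to $f_{B}$ inside $G_{_B}(u^{\varepsilon}(x_{j},t_{n+1}),u^{\varepsilon}(x_{j},t_{n-1}))$ --- namely the convex combinations $\theta(u^{\varepsilon}(x_{j},t_{n+1}))^{2}+(1-\theta)(u^{\varepsilon}(x_{j},t_{n-1}))^{2}$ with $\theta\in[0,1]$ --- lies in $[0,\Lambda^{2}]\subset[0,B)$. On this interval $\rho(s/B)\equiv1$, and because $\rho$ is constant on a whole neighborhood of $[0,\Lambda^{2}/B]$, every derivative of $\rho_{_B}$ vanishes there; hence $f_{B}=f_{\varepsilon}$ together with $f_{B}^{(k)}=f_{\varepsilon}^{(k)}$ for $k=1,2,3$ throughout a neighborhood of $[0,\Lambda^{2}]$. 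The multiplicative factor $g_{_B}$ appearing in the definition of $G_{_B}$ likewise reduces to the identity when evaluated at the average of $u^{\varepsilon}(x_{j},t_{n+1})$ and $u^{\varepsilon}(x_{j},t_{n-1})$, whose modulus is $\leq\Lambda$. Consequently $G_{_B}(u^{\varepsilon}(x_{j},t_{n+1}),u^{\varepsilon}(x_{j},t_{n-1}))=G_{\varepsilon}(u^{\varepsilon}(x_{j},t_{n+1}),u^{\varepsilon}(x_{j},t_{n-1}))$ at every grid point, and therefore $\hat{\xi}^{\varepsilon,n}_{j}=\xi^{\varepsilon,n}_{j}$ for all $j\in\mathcal{T}_{N}$ and $n\geq0$.

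Given this pointwise identity, the three bounds follow at once: $\delta_{x}^{+}\hat{\xi}^{\varepsilon,n}_{j}=\delta_{x}^{+}\xi^{\varepsilon,n}_{j}$ as well, so $\|\hat{\xi}^{\varepsilon,0}\|_{H^{1}}$, $\|\hat{\xi}^{\varepsilon,n}\|_{l^{2}}$, and $\|\delta_{x}^{+}\hat{\xi}^{\varepsilon,n}\|_{l^{2}}$ inherit verbatim the estimates $h^{2}+\tau^{2}$, $h^{2}+\tau^{2}/\varepsilon^{2}$, and $h^{2}+\tau^{2}/\varepsilon^{3}$ of Lemma \ref{trunerrlem}. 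In particular, the $\varepsilon$-dependent contributions come exactly from the Taylor remainders of the nonlinearity, which are controlled by the pointwise bounds $|f_{\varepsilon}^{\prime}|\leq1/\varepsilon$, $|f_{\varepsilon}^{\prime\prime}|\lesssim1/\varepsilon^{2}$, and $|f_{\varepsilon}^{\prime\prime\prime}|\lesssim1/\varepsilon^{3}$; these are unchanged by the cut-off precisely because it is dormant on $[0,\Lambda^{2}]$.

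The only point requiring genuine care --- and the main (mild) obstacle --- is verifying that no argument of $f_{B}$ or of its derivatives, including the intermediate remainder points $\xi_{j}(\theta,\delta)$ produced by the second-order Taylor expansion of $G_{\varepsilon}$ in Lemma \ref{trunerrlem}, ever leaves $[0,B)$. This is immediate here, since each such point is either a convex combination of the squared exact-solution values $(u^{\varepsilon}(x_{j},t_{n\pm1}))^{2}$, $(u^{\varepsilon}(x_{j},t_{n}))^{2}$ or lies on a segment joining two of them, and each of these is bounded by $\Lambda^{2}<B$. Hence the truncation performed in (\ref{CNFDB}) alters the scheme only on states the exact solution never visits, leaving the consistency error untouched; this is exactly why the computation can be borrowed wholesale from Lemma \ref{trunerrlem} and the proof omitted.
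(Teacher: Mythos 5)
Your proof is correct and formalizes exactly what the paper intends when it omits the proof as ``similar to Lemma \ref{trunerrlem}'': since $B=(\Lambda+1)^{2}$, every argument of $f_{B}$ (including the Taylor remainder points) along the exact solution lies in $[0,\Lambda^{2}]$, where $\rho_{_B}\equiv 1$, so $G_{_B}$ coincides with $G_{\varepsilon}$ on exact-solution values, $\hat{\xi}^{\varepsilon,n}_{j}\equiv\xi^{\varepsilon,n}_{j}$ pointwise, and all three bounds transfer verbatim from Lemma \ref{trunerrlem}. Your exact-equality reduction is, if anything, a cleaner route than re-running the truncation-error expansion with $f_{B}$ in place of $f_{\varepsilon}$, which is the alternative the paper's remark implicitly suggests.
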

Next, we give the error bounds of the nonlinear term as follows.
\begin{lemma}\label{nonerrBlem}
For $j\in \mathcal{T}_{N}^{0}$ and $1\leq n \leq \frac{T}{\tau}-1$, we define the error of the nonlinear term
\begin{align}
\hat{\eta}^{\varepsilon, n}_{j}=G_{_B}\big(u^{\varepsilon}(x_{j},t_{n+1}),u^{\varepsilon}(x_{j},t_{n-1})\big)-G_{_B}(\hat{u}^{\varepsilon,n+1}_{j},\hat{u}^{\varepsilon,n-1}_{j}),
\end{align}
we have
\begin{align}\label{nonlinearty}
&\|\hat{\eta}^{\varepsilon, n}\|_{l^{2}}\lesssim \frac{1}{\varepsilon}\left(\|\chi^{\varepsilon, n+1}\|_{l^{2}}+\|\chi^{\varepsilon, n-1}\|_{l^{2}}\right),\\
&\|\delta^{+}_{x}\hat{\eta}^{\varepsilon, n}\|_{l^{2}}\lesssim \frac{1}{\varepsilon^{2}}\left(\|\chi^{\varepsilon, n+1}\|_{l^{2}}+\|\chi^{\varepsilon, n-1}\|_{l^{2}}+\|\delta^{+}_{x}\chi^{\varepsilon, n+1}\|_{l^{2}}+\|\delta^{+}_{x}\chi^{\varepsilon, n-1}\|_{l^{2}}\right).
\end{align}
\end{lemma}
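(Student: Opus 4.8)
The plan is to reduce both estimates to pointwise bounds on the first and second derivatives of the truncated nonlinearity $G_{_B}$, which I record first. Writing $G_{_B}(z_1,z_2)=\Phi(z_1,z_2)\,g_{_B}\!\left(\frac{z_1+z_2}{2}\right)$ with $\Phi(z_1,z_2)=\int_0^1 f_B(\theta z_1^2+(1-\theta)z_2^2)\,\mathrm{d}\theta$, and using that $g_{_B}$ is a smooth, bounded, globally Lipschitz truncation of the identity with constants independent of $\varepsilon$, I would establish the global bounds $\|\partial_{z_i}G_{_B}\|_{L^{\infty}(\mathbb{R}^{2})}\lesssim \frac1\varepsilon$ and $\|\partial^{2}_{z_iz_k}G_{_B}\|_{L^{\infty}(\mathbb{R}^{2})}\lesssim \frac1{\varepsilon^{2}}$. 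These follow from the derivative bounds $|f_\varepsilon'|\le \frac1\varepsilon$ and $|f_\varepsilon''|\lesssim\frac1{\varepsilon^{2}}$ recorded in the proof of Lemma \ref{trunerrlem}, the crude bound $|\Phi|\le \|f_B\|_{L^{\infty}}\lesssim \ln\frac1{\varepsilon^{2}}\lesssim\frac1\varepsilon$, and the elementary AM--GM inequality $\frac{2\sqrt\rho}{\varepsilon^{2}+\rho}\le\frac1\varepsilon$, which converts each squared argument into the correct power of $\varepsilon$; the cut-off factor $\rho(\cdot/B)$ and $B=(\Lambda+1)^{2}$ contribute only $\varepsilon$-independent constants.

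For the $l^{2}$ bound I would change one argument at a time, $\hat\eta^{\varepsilon,n}_j=[G_{_B}(a_1,a_2)-G_{_B}(b_1,a_2)]+[G_{_B}(b_1,a_2)-G_{_B}(b_1,b_2)]$ with $(a_1,a_2)=(u^\varepsilon(x_j,t_{n+1}),u^\varepsilon(x_j,t_{n-1}))$ and $(b_1,b_2)=(\hat u^{\varepsilon,n+1}_j,\hat u^{\varepsilon,n-1}_j)$, so $a_i-b_i=\chi^{\varepsilon,n\pm1}_j$. The key pointwise fact is that $G_{_B}$ is Lipschitz in each variable with constant $\lesssim\frac1\varepsilon$: for the $\Phi$-factor this uses the stated property $|f_B(s_1)-f_B(s_2)|\le C_B|\sqrt{s_1}-\sqrt{s_2}|$ with $C_B=\|f_B'\|_{L^{\infty}}\lesssim\frac1\varepsilon$, combined with $|\sqrt{\theta z_1^2+(1-\theta)z_2^2}-\sqrt{\theta w^2+(1-\theta)z_2^2}|\le\sqrt\theta\,|z_1-w|\le|z_1-w|$, which comes from $|\sqrt p-\sqrt q|=\frac{|p-q|}{\sqrt p+\sqrt q}$ together with $|z_1^2-w^2|\le(|z_1|+|w|)|z_1-w|$; for the $g_{_B}$-factor it uses $|g_{_B}|,\ \mathrm{Lip}(g_{_B})\lesssim1$ and $|\Phi|\lesssim\frac1\varepsilon$. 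This gives $|\hat\eta^{\varepsilon,n}_j|\lesssim\frac1\varepsilon(|\chi^{\varepsilon,n+1}_j|+|\chi^{\varepsilon,n-1}_j|)$, and squaring, summing over $j$ and multiplying by $h$ yields the first inequality.

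For the discrete-gradient bound I would use the mean-value representation $\hat\eta^{\varepsilon,n}_j=\mathcal G^1_j\,\chi^{\varepsilon,n+1}_j+\mathcal G^2_j\,\chi^{\varepsilon,n-1}_j$, where $\mathcal G^i_j=\int_0^1\partial_{z_i}G_{_B}\big((1-\sigma)b_j+\sigma a_j\big)\,\mathrm{d}\sigma$ and $a_j,b_j$ denote the pairs above, and apply the discrete product rule $\delta^{+}_{x}(\mathcal G^i_j\chi_j)=\mathcal G^i_j\,\delta^{+}_{x}\chi_j+(\delta^{+}_{x}\mathcal G^i_j)\,\chi_{j+1}$. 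The ``smooth'' terms $\mathcal G^i_j\,\delta^{+}_{x}\chi_j$ are controlled by $\|\partial_{z_i}G_{_B}\|_{L^{\infty}}\lesssim\frac1\varepsilon$, contributing $\frac1\varepsilon(\|\delta^{+}_{x}\chi^{\varepsilon,n+1}\|_{l^{2}}+\|\delta^{+}_{x}\chi^{\varepsilon,n-1}\|_{l^{2}})$. For $\delta^{+}_{x}\mathcal G^i_j$ a discrete chain rule produces second derivatives of $G_{_B}$ (size $\frac1{\varepsilon^{2}}$) multiplied by the discrete differences of the arguments, namely $(1-\sigma)\delta^{+}_{x}b_j+\sigma\,\delta^{+}_{x}a_j$; splitting $\delta^{+}_{x}b_j=\delta^{+}_{x}a_j-\delta^{+}_{x}\chi_j$ and invoking assumption (A) for $\|\delta^{+}_{x}u^\varepsilon\|_{l^{\infty}}\lesssim1$ yields the contribution $\frac1{\varepsilon^{2}}(\|\chi^{\varepsilon,n+1}\|_{l^{2}}+\|\chi^{\varepsilon,n-1}\|_{l^{2}})$ plus remainders that are quadratic in the error.

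The hard part will be precisely these quadratic remainders: the pieces of $\delta^{+}_{x}\mathcal G^i_j$ that carry the factor $\delta^{+}_{x}\chi_j$ generate terms of the form $\frac1{\varepsilon^{2}}|\chi_j|\,|\delta^{+}_{x}\chi_j|$ (together with an $h$-weighted $\frac1{\varepsilon^{2}}|\delta^{+}_{x}\chi_j|^{2}$), which are not manifestly linear in the error norms. I would absorb them using the a priori bound $\|\chi^{\varepsilon,m}\|_{l^{\infty}}\le\|u^\varepsilon\|_{l^{\infty}}+\|\hat u^{\varepsilon,m}\|_{l^{\infty}}\lesssim1$ supplied by the inductive framework of Theorem \ref{CNFDBerr}, in which $\|\hat u^{\varepsilon,m}\|_{l^{\infty}}\le\Lambda+1$ is being propagated, together with the smallness of $h$, so that $\frac1{\varepsilon^{2}}\|\chi\|_{l^{\infty}}\|\delta^{+}_{x}\chi\|_{l^{2}}\lesssim\frac1{\varepsilon^{2}}\|\delta^{+}_{x}\chi\|_{l^{2}}$ and the $h$-weighted term is absorbed as a higher-order contribution. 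Collecting all pieces and using $\frac1\varepsilon\le\frac1{\varepsilon^{2}}$ for $0<\varepsilon\ll1$ then gives the second inequality.
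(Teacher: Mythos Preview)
Your treatment of the $l^{2}$ bound is correct and essentially the same as the paper's: both split $G_{_B}$ via its product structure, use that $f_B$ is Lipschitz in the square-root variable with constant $\lesssim1/\varepsilon$ and that $|\Phi|\lesssim|\ln\varepsilon|\lesssim1/\varepsilon$, and that $g_{_B}$ is bounded and Lipschitz with $\varepsilon$-independent constants. The global bounds $\|\partial_{z_i}G_{_B}\|_{L^\infty}\lesssim1/\varepsilon$ and $\|\partial^2_{z_iz_k}G_{_B}\|_{L^\infty}\lesssim1/\varepsilon^{2}$ you record are also correct and well justified by the AM--GM type arguments you indicate.

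For the $\delta_x^{+}$ bound, however, your specific route has a genuine gap. Writing $\hat\eta_j=\mathcal G^{1}_{j}\chi^{\varepsilon,n+1}_{j}+\mathcal G^{2}_{j}\chi^{\varepsilon,n-1}_{j}$ and then applying the discrete product rule forces the second-derivative factor (size $1/\varepsilon^{2}$) to be multiplied by the increments of the \emph{interpolation path} between $a_j$ and $b_j$; these unavoidably carry $\delta_x^{+}b_j=\delta_x^{+}a_j-\delta_x^{+}\chi_j$, producing the quadratic remainders $\frac{1}{\varepsilon^{2}}|\delta_x^{+}\chi_j|\,|\chi_{j+1}|$. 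Your proposed cure---absorb $\|\chi^{\varepsilon,m}\|_{l^{\infty}}$ via the ``inductive framework'' of Theorem~\ref{CNFDBerr}---does not close: the lemma is applied at step $n$ and involves $\chi^{\varepsilon,n+1}$, while any induction hypothesis only supplies $\|\hat u^{\varepsilon,m}\|_{l^{\infty}}\le\Lambda+1$ for $m\le n$; in fact the paper does \emph{not} argue by induction at all, and the whole purpose of the truncation $G_{_B}$ is that Lemma~\ref{nonerrBlem} be unconditional.

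The paper avoids the quadratic terms by a finer four-term product-rule decomposition of $\delta_x^{+}\big[\Phi\cdot g_{_B}-\hat\Phi\cdot\hat g_{_B}\big]$ and, crucially, by mixing Lipschitz bounds with the $L^{\infty}$ boundedness of the truncated functions: whenever a factor $|\delta_x^{+}\chi|$ appears, the adjacent difference $f_B(\rho)-f_B(\hat\rho)$ is controlled by $2\|f_B\|_{L^{\infty}}\lesssim1/\varepsilon$ rather than by its Lipschitz bound, so the product stays linear in $\chi$. Your $C^{2}$ approach can be repaired to achieve the same thing without the paper's factor-by-factor splitting: differentiate $\hat\eta_j=G_{_B}(a_j)-G_{_B}(b_j)$ directly and split as
\[
h\,\delta_x^{+}\hat\eta_j=\int_0^1\big[\nabla G_{_B}(a(s))-\nabla G_{_B}(b(s))\big]\cdot h\,\delta_x^{+}a_j\,\mathrm{d}s
+\int_0^1\nabla G_{_B}(b(s))\cdot h\,\delta_x^{+}\chi_j\,\mathrm{d}s,
\]
with $a(s),b(s)$ the straight segments. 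The first integral is $\lesssim\frac{1}{\varepsilon^{2}}(|\chi_j|+|\chi_{j+1}|)$ since $|\delta_x^{+}a_j|\lesssim1$ by assumption~(A), and the second is $\lesssim\frac{1}{\varepsilon}|\delta_x^{+}\chi_j|$. This is linear in $\chi$ with no a~priori boundedness needed, and gives exactly the stated estimate.
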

\begin{proof}
We define the error of the nonlinear term
\begin{align}
\hat{\eta}^{\varepsilon, n}_{j}=G_{_B}(u^{\varepsilon}(x_{j},t_{n+1}),u^{\varepsilon}(x_{j},t_{n-1}))-G_{_B}(\hat{u}^{\varepsilon,n+1}_{j},\hat{u}^{\varepsilon,n-1}_{j}).
\end{align}
And denote
\begin{align*}
&\rho_{j}^{\varepsilon, n}(\theta) =\theta\left(u^{\varepsilon}\left(x_{j}, t_{n+1}\right)\right)^{2}+(1-\theta)\left(u^{\varepsilon}\left(x_{j}, t_{n-1}\right)\right)^{2} ,\\
&\hat{\rho}_{j}^{\varepsilon, n}(\theta) =\theta\left(\hat{u}_{j}^{\varepsilon, n+1}\right)^{2}+(1-\theta)\left(\ \hat{u}_{j}^{\varepsilon, n-1}\right)^{2}, \\
&\mu_{j}^{\varepsilon, n} =\frac{1}{2}\big[u^{\varepsilon}\left(x_{j}, t_{n+1}\right)+u^{\varepsilon}\left(x_{j}, t_{n-1}\right)\big], \\ &\hat{\mu}_{j}^{\varepsilon, n}=\frac{1}{2}\left[\hat{u}_{j}^{\varepsilon, n+1}+\hat{u}_{j}^{\varepsilon, n-1}\right], \\
&\pi_{j}^{\varepsilon, n} =u^{\varepsilon}\left(x_{j}, t_{n}\right)+\hat{u}_{j}^{\varepsilon, n},~~~~j \in \mathcal{T}_{N}^{0},~1\leq n \leq \frac{T}{\tau}-1,~\theta\in [0,1].
\end{align*}
From the definition of $G_{B},~F_{B},~g_{_B}$, we can get
\begin{align*}
\hat{\eta}^{\varepsilon, n}_{j}=& g_{_B}(\mu_{j}^{\varepsilon, n}) \int_{0}^{1}\left[f_{B}\left(\rho_{j}^{\varepsilon, n}(\theta)\right)-f_{B}\left(\hat{\rho}_{j}^{\varepsilon, n}(\theta)\right)\right] \mathrm{d} \theta \\
&+\left[g_{_B}\left(\mu_{j}^{\varepsilon, n}\right)-g_{_B}\left(\hat{\mu}_{j}^{\varepsilon, n}\right)\right] \int_{0}^{1} f_{B}\left(\hat{\rho}_{j}^{\varepsilon, n}(\theta)\right) \mathrm{d} \theta.
\end{align*}
According to the Lipschitz property of $f_{B}(s^{2})$, we obtain
\begin{align}\label{f_{B}}
\begin{split}
\left|f_{B}\left(\rho_{j}^{\varepsilon, n}(\theta)\right)-f_{B}\left(\hat{\rho}_{j}^{\varepsilon, n}(\theta)\right)\right|&\leq \|f_{B}^{\prime}\|_{L^{\infty}}
\left|\sqrt{\rho_{j}^{\varepsilon, n}(\theta)}-\sqrt{\hat{\rho}_{j}^{\varepsilon, n}(\theta)}\right| \\
&\leq \frac{1}{\varepsilon}\frac{\theta \pi_{j}^{\varepsilon, n+1}\left|\chi_{j}^{\varepsilon, n+1}\right|+(1-\theta) \pi_{j}^{\varepsilon, n-1}\left|\chi_{j}^{\varepsilon, n-1}\right|}{\sqrt{\rho_{j}^{\varepsilon, n}(\theta)}+\sqrt{\hat{\rho}_{j}^{\varepsilon, n}(\theta)}} \\
& \leq \frac{1}{\varepsilon}\left(\sqrt{\theta}\left|\chi_{j}^{\varepsilon, n+1}\right|+\sqrt{1-\theta}\left|\chi_{j}^{\varepsilon, n-1}\right|\right).
\end{split}
\end{align}
By the Lipschitz property of $g_{_B}(s^{2})$, it follows that
\begin{align}\label{nonlinearty1}
\begin{split}
\left|g_{_B}\left(\mu_{j}^{\varepsilon, n}\right)-g_{_B}\left(\hat{\mu}_{j}^{\varepsilon, n}\right) \right|&\leq C\left|\sqrt{\mu_{j}^{\varepsilon, n}}-\sqrt{\hat{\mu}_{j}^{\varepsilon, n}}\right|\\
&\leq C\left(\left|\chi_{j}^{\varepsilon, n+1}\right|+\left|\chi_{j}^{\varepsilon, n-1}\right|\right).
\end{split}
\end{align}
Then with the boundness of $\mu_{j}^{\varepsilon, n},f_{B}\left(\hat{\rho}_{j}^{\varepsilon, n}(\theta)\right)$, and according to (\ref{f_{B}}), (\ref{nonlinearty1}), we have
\begin{align}
\left|\hat{\eta}^{\varepsilon, n}_{j} \right|\lesssim \frac{1}{\varepsilon}\left(\left|\chi_{j}^{\varepsilon, n+1}\right|+\left|\chi_{j}^{\varepsilon, n-1}\right|\right).
\end{align}
For $\delta^{+}_{x}\hat{\eta}^{\varepsilon, n}_{j}$, it is easy to get
\begin{align*}
\delta^{+}_{x}\hat{\eta}^{\varepsilon, n}_{j}=& \delta^{+}_{x}{\bigg{[}}g_{_B}(\hat{\mu}_{j}^{\varepsilon, n}) \int_{0}^{1}\left[f_{B}\left(\rho_{j}^{\varepsilon, n}(\theta)\right)-f_{B}\left(\hat{\rho}_{j}^{\varepsilon, n}(\theta)\right)\right] \mathrm{d }\theta \\
&+\left[g_{_B}\left(\mu_{j}^{\varepsilon, n}\right)-g_{_B}\left(\hat{\mu}_{j}^{\varepsilon, n}\right)\right] \int_{0}^{1} f_{B}\left(\rho_{j}^{\varepsilon, n}(\theta)\right) \mathrm{d} \theta {\bigg{]}}\\
=&g_{_B}(\hat{\mu}_{j}^{\varepsilon, n} )\int_{0}^{1}\delta^{+}_{x}\left[f_{B}\left(\rho_{j}^{\varepsilon, n}(\theta)\right)-f_{B}\left(\hat{\rho}_{j}^{\varepsilon, n}(\theta)\right)\right] \mathrm{d} \theta \\
&+\left[g_{_B}\left(\mu_{j}^{\varepsilon, n}\right)-g_{_B}\left(\hat{\mu}_{j}^{\varepsilon, n}\right)\right] \int_{0}^{1} \delta^{+}_{x}f_{B}\left(\rho_{j}^{\varepsilon, n}(\theta)\right) \mathrm{d} \theta\\
&+\delta^{+}_{x}g_{_B}(\hat{\mu}_{j}^{\varepsilon, n} )\int_{0}^{1}\left[f_{B}\left(\rho_{j+1}^{\varepsilon, n}(\theta)\right)-f_{B}\left(\hat{\rho}_{j+1}^{\varepsilon, n}(\theta)\right)\right] \mathrm{d} \theta \\
&+\delta^{+}_{x}\left[g_{_B}\left(\mu_{j}^{\varepsilon, n}\right)-g_{_B}\left(\hat{\mu}_{j}^{\varepsilon, n}\right)\right] \int_{0}^{1} f_{B}\left(\rho_{j+1}^{\varepsilon, n}(\theta)\right) \mathrm{d} \theta.
\end{align*}
Firstly, for $j\in \mathcal{T}_{N},~\theta,s \in[0,1]$, we define
\begin{align*}
&\kappa_{j}^{\varepsilon, n}(\theta,s)=s \rho_{j+1}^{\varepsilon, n}(\theta)+(1-s) \rho_{j}^{\varepsilon, n}(\theta),\\ &\hat{\kappa}_{j}^{\varepsilon, n}(\theta,s)=s \hat{\rho}_{j+1}^{\varepsilon, n}(\theta)+(1-s) \hat{\rho}_{j}^{\varepsilon, n}(\theta),
\end{align*}
then we have
\begin{align*}
&\delta^{+}_{x}\left[f_{B}\left(\rho_{j}^{\varepsilon, n}(\theta)\right)-f_{B}\left(\hat{\rho}_{j}^{\varepsilon, n}(\theta)\right) \right]\\
=&\delta_{x}^{+} \rho_{j}^{\varepsilon, n}(\theta) \int_{0}^{1} f_{B}^{\prime}\left(\kappa_{j}^{\varepsilon, n}(\theta, s)\right) \mathrm{d } s-\delta_{x}^{+} \hat{\rho}_{j}^{\varepsilon, n}(\theta) \int_{0}^{1} f_{B}^{\prime}\left(\hat{\kappa}_{j}^{\varepsilon, n}(\theta, s)\right) \mathrm{d} s\\
=&\int_{0}^{1}\left[f_{B}^{\prime}\left(\kappa_{j}^{\varepsilon, n}(\theta, s)\right)-f_{B}^{\prime}\left(\hat{\kappa}_{j}^{\varepsilon, n}(\theta, s)\right)\right] \delta_{x} \rho_{j}^{\varepsilon, n}(\theta) \mathrm{d} s\\
&+\int_{0}^{1} f_{B}^{\prime}\left(\hat{\kappa}_{j}^{\varepsilon, n}(\theta, s)\right)\left[\delta_{x}^{+} \rho_{j}^{\varepsilon, n}(\theta)-\delta_{x}^{+} \hat{\rho}_{j}^{\varepsilon, n}(\theta)\right] \mathrm{d }s,
\end{align*}
and
\begin{align*}
&\delta^{+}_{x}\left[ \rho_{j}^{\varepsilon, n}(\theta)-\hat{\rho}_{j}^{\varepsilon, n}(\theta)\right]\\
=&\theta \bigg[ 2u^{\varepsilon}\left(x_{j}, t_{n+1}\right) \delta_{x}^{+}\chi_{j}^{\varepsilon, n+1}+2\chi_{j+1}^{\varepsilon, n+1}\delta_{x}^{+} u^{\varepsilon}\left(x_{j}, t_{n+1}\right)\\
&~~~~~-\chi_{j}^{\varepsilon, n+1}\delta_{x}^{+} \chi_{j}^{\varepsilon,n+1}-\chi_{j+1}^{\varepsilon, n+1}\delta_{x}^{+} \chi_{j}^{\varepsilon,n+1}\bigg]\\
&+(1-\theta)\bigg[ 2u^{\varepsilon}\left(x_{j}, t_{n-1}\right) \delta_{x}^{+}\chi_{j}^{\varepsilon, n-1}+2\chi_{j+1}^{\varepsilon, n-1}\delta_{x}^{+} u^{\varepsilon}\left(x_{j}, t_{n-1}\right)\\
&~~~~~~~~~~~~~~~-\chi_{j}^{\varepsilon, n-1}\delta_{x}^{+} \chi_{j}^{\varepsilon,n-1}-\chi_{j+1}^{\varepsilon, n-1}\delta_{x}^{+} \chi_{j}^{\varepsilon,n-1}\bigg].
\end{align*}
Besides,
\begin{align*}
&\sqrt{1-\theta}\left|\chi_{j+1}^{\varepsilon, n-1}\right| \leq \sqrt{\hat{\rho}_{j+1}^{\varepsilon, n}(\theta)}+\left|u^{\varepsilon}\left(x_{j+1}, t_{n-1}\right)\right|,\\ &\sqrt{1-\theta}\left|\chi_{j}^{\varepsilon, n-1}\right| \leq \sqrt{\hat{\rho}_{j}^{\varepsilon, n}(\theta)}+\left|u^{\varepsilon}\left(x_{j}, t_{n-1}\right)\right|,\\
&\sqrt{\theta}\left|\chi_{j+1}^{\varepsilon, n+1}\right| \leq \sqrt{\hat{\rho}_{j+1}^{\varepsilon, n}(\theta)}+\left|u^{\varepsilon}\left(x_{j+1}, t_{n+1}\right)\right|,\\
&\sqrt{\theta}\left|\chi_{j}^{\varepsilon, n+1}\right| \leq \sqrt{\hat{\rho}_{j}^{\varepsilon, n}(\theta)}+\left|u^{\varepsilon}\left(x_{j}, t_{n+1}\right)\right|.
\end{align*}
By the Lipschitz property of $f_{B}$, and $f_{B}^{\prime}$, we have
\begin{align*}
&\left|\int_{0}^{1}\left[f_{B}^{\prime}\left(\kappa_{j}^{\varepsilon, n}(\theta, s)\right)-f_{B}^{\prime}\left(\hat{\kappa}_{j}^{\varepsilon, n}(\theta, s)\right)\right] \mathrm{d} s\right|\\
\leq&\|f_{B}^{\prime\prime} \|_{L^{\infty}}\int_{0}^{1}\left|\sqrt{\kappa_{j}^{\varepsilon, n}(\theta, s)}-\sqrt{\hat{\kappa}_{j}^{\varepsilon, n}(\theta, s)}\right| \mathrm{d} s\\
\lesssim& \frac{1}{\varepsilon^{2}}\left(\left|\chi_{j+1}^{\varepsilon, n+1}\right|+\left|\chi_{j}^{\varepsilon, n+1}\right|+ \left|\chi_{j+1}^{\varepsilon, n-1}\right|+\left|\chi_{j}^{\varepsilon, n-1}\right|\right),
\end{align*}
\begin{align*}
&\left|\int_{0}^{1} f_{B}^{\prime}\left(\hat{\kappa}_{j}^{\varepsilon, n}(\theta, s)\right) \mathrm{d} s\right|=\left|\frac{f_{B}\left(\hat{\rho}_{j+1}^{\varepsilon, n}(\theta)\right)-f_{B}\left(\hat{\rho}_{j}^{\varepsilon, n}(\theta)\right)}{\hat{\rho}_{j+1}^{\varepsilon, n}(\theta)-\hat{\rho}_{j}^{\varepsilon, n}(\theta)}\right| \leq \frac{1}{\varepsilon\left(\sqrt{\hat{\rho}_{j+1}^{\varepsilon, n}(\theta)}+\sqrt{\hat{\rho}_{j}^{\varepsilon, n}(\theta)}\right)}.
\end{align*}
Then according to the boundedness of $\delta_{x}^{+} \rho_{j}^{\varepsilon, n}(\theta),~g_{_B}(\cdot) \text { and } f_{B}^{\prime}(\cdot)$, we arrive at
\begin{align}\label{v_{1}}
\begin{split}
&\left|\int_{0}^{1} \delta_{x}^{+}\left[f_{B}\left(\rho_{j}^{\varepsilon, n}(\theta)\right)-f_{B}\left(\hat{\rho}_{j}^{\varepsilon, n}(\theta)\right)\right] \mathrm{d} \theta \cdot g_{_B}\left(\hat{\mu}_{j}^{\varepsilon, n}\right)\right|\\
\lesssim & \sum_{m=n+1, n-1}\frac{1}{\varepsilon^{2}}\left(\left|\chi_{j}^{\varepsilon, m}\right|+\left|\chi_{j+1}^{\varepsilon, m}\right|+\left|\delta_{x}^{+} \chi_{j}^{\varepsilon, m}\right|\right).
\end{split}
\end{align}
Secondly, in view of the boundedness of $\delta_{x}^{+} f_{B}\left(\rho_{j}^{\varepsilon, n}(\theta)\right)$ as well as the Lipschitz property of $g_{_B}(z)$, we get
\begin{align}\label{v_{2}}
\left|\int_{0}^{1} \delta_{x}^{+} f_{B}\left(\rho_{j}^{\varepsilon, n}(\theta)\right) \mathrm{d} \theta \cdot\left[g_{_B}\left(\mu_{j}^{\varepsilon, n}\right)-g_{_B}\left(\hat{\mu}_{j}^{\varepsilon, n}\right)\right]\right| \lesssim \frac{1}{\varepsilon}\left(\left|\chi_{j}^{\varepsilon, n+1}\right|+\left|\chi_{j}^{\varepsilon, n-1}\right|\right).
\end{align}
Thirdly, noticing the property of $g_{_B}(\cdot) \in C_{0}^{\infty}$, we have
\begin{align*}
|\delta^{+}_{x}g_{_B}(\hat{\mu}_{j}^{\varepsilon, n})|\lesssim |\delta^{+}_{x}\hat{\mu}_{j}^{\varepsilon, n}|\leq |\delta^{+}_{x}\chi_{j}^{\varepsilon, n+1}|+|\delta^{+}_{x}\chi_{j}^{\varepsilon, n-1}|+C.
\end{align*}
Recalling the property of $f_{B}(s)$ and (\ref{f_{B}}), we obtian
\begin{align}\label{v_{3}}
\begin{split}
&\left|\int_{0}^{1}\left[f_{B}\left(\rho_{j+1}^{\varepsilon, n}(\theta)\right)-f_{B}\left(\hat{\rho}_{j+1}^{\varepsilon, n}(\theta)\right)\right] \mathrm{d} \theta \cdot \delta_{x}^{+} g_{_B}\left(\hat{\mu}_{j}^{\varepsilon, n}\right)\right|\\
\lesssim &\sum_{m=n-1, n+1}\frac{1}{\varepsilon}\left(\left|\chi_{j+1}^{\varepsilon, m}\right|+\left|\delta_{x}^{+} \chi_{j}^{\varepsilon, m}\right|\right).
\end{split}
\end{align}
Finally, we define
\begin{align*}
\sigma_{j}^{n}(\theta)=\theta \mu_{j+1}^{\varepsilon, n}+(1-\theta) \mu_{j}^{\varepsilon, n}, \quad \hat{\sigma}_{j}^{n}(\theta)=\theta \hat{\mu}_{j+1}^{\varepsilon, n}+(1-\theta) \hat{\mu}_{j}^{\varepsilon, n},
\end{align*}
for $\theta \in[0,1] \text { and } j\in \mathcal{T}_{N}$.
Then we have
\begin{align}\label{v_{4}}
\begin{split}
&\left|\delta_{x}^{+}\left(g_{_B}\left(\mu_{j}^{\varepsilon, n}\right)-g_{_B}\left(\hat{\mu}_{j}^{\varepsilon, n}\right)\right)\right|\\
=&\left|\delta_{x}^{+}\left[\rho_{_B}\left(\left(\mu_{j}^{\varepsilon, n}\right)^{2}\right) \mu_{j}^{\varepsilon, n}-\rho_{_B}\left(\left(\hat{\mu}_{j}^{\varepsilon, n}\right)^{2}\right) \hat{\mu}_{j}^{\varepsilon, n}\right]\right|\\
=&\left|\int_{0}^{1}\left[\delta_{x}^{+} \mu_{j}^{\varepsilon, n} \partial_{z} g_{_B}(z)-\delta_{x}^{+} \hat{\mu}_{j}^{\varepsilon, n} \partial_{\hat{z}} g_{_B}(\hat{z})\right] \mathrm{d} \theta\right|\\
\leq&\left|\int_{0}^{1}\left(\partial_{z} g_{B}\left(\sigma_{j}^{n}(\theta)\right)-\partial_{z} g_{B}\left(\hat{\sigma}_{j}^{n}(\theta)\right)\right) \delta_{x}^{+} \mu_{j}^{\varepsilon, n} \mathrm{d} \theta\right| \\
&+\left|\int_{0}^{1}\left(\delta_{x}^{+} \mu_{j}^{\varepsilon, n}-\delta_{x}^{+} \hat{\mu}_{j}^{\varepsilon, n}\right) \partial_{z} g_{B}\left(\hat{\sigma}_{j}^{n}(\theta)\right) \mathrm{d} \theta\right| \\
\lesssim &\max _{\theta \in[0,1]}\left\{| \sigma_{j}^{n}(\theta)- \hat{\sigma}_{j}^{n}(\theta)|\right\}+\left|\delta_{x}^{+}\left(\chi_{j}^{\varepsilon, n+1}+\chi_{j}^{\varepsilon, n-1}\right)\right| \\
\lesssim & \sum_{m=n+1, n-1}\left(\left|\chi_{j}^{\varepsilon, m}\right|+\left|\chi_{j+1}^{\varepsilon, m}\right|+\left|\delta_{x}^{+} \chi_{j}^{\varepsilon, m}\right|\right) ,
\end{split}
\end{align}
where $\partial_{z} g_{_B}(z)=\rho_{_B}\left(z^{2}\right)+2z^{2} \rho_{_B}^{\prime}\left(z^{2}\right),~z=\sigma_{j}^{n}(\theta),~\hat{z}=\hat{\sigma}_{j}^{n}(\theta)$.\\
Combining (\ref{v_{1}})-(\ref{v_{4}}) and the H\"{o}lder inequality, this ends the proof.
\end{proof}

According to Lemma \ref{trunerrBlem} and Lemma \ref{nonerrBlem}, we give the proof of Theorem \ref{CNFDBerr}. Subtracting (\ref{CNFDB}) from (\ref{trunerreB}), we have
\begin{subequations}\label{energyB}
\begin{align}
&\delta_{t}^{2}\chi^{\varepsilon,n}_{j}-\frac{1}{2}\delta_{x}^{2} \left( \chi^{\varepsilon,n+1}_{j}+\chi^{\varepsilon,n-1}_{j} \right)+\frac{1}{2} \left( \chi^{\varepsilon,n+1}_{j}+\chi^{\varepsilon,n-1}_{j} \right)=\hat{\xi}^{\varepsilon,n}_{j}-\hat{\eta}^{\varepsilon,n}_{j},~~n\geq1,\label{energyB1}\\
&\chi^{\varepsilon,0}=0,~~\chi^{\varepsilon,1}_{j}=\tau\hat{\xi}^{\varepsilon,0}_{j},~~j\in \mathcal{T}_{N}^{0}.\label{energyB2}
\end{align}
\end{subequations}
Denote the `energy' for the error vector $\chi^{\varepsilon,n}$ as
\begin{align}
\hat{E}^{n}=\|\delta_{t}^{+}\chi^{\varepsilon,n}\|_{l^{2}}^{2}+\frac{1}{2}\left( \|\delta_{x}^{+}\chi^{\varepsilon,n}\|_{l^{2}}^{2}+\|\delta_{x}^{+}\chi^{\varepsilon,n+1}\|_{l^{2}}^{2}\right)+\frac{1}{2}\left( \|\chi^{\varepsilon,n}\|_{l^{2}}^{2}+\|\chi^{\varepsilon,n+1}\|_{l^{2}}^{2}\right),~~n\geq 0.
\end{align}
\begin{proof}({\bf {Proof of Theorem \ref{CNFDBerr}}})
When $n=1$,
under the assumption (A), by Lemma \ref{trunerrBlem} we can conclude the errors of the first step discretization (\ref{initialvalue})
\begin{align}\label{localtrunerror}
\chi^{\varepsilon,0}=0,~~\|\chi^{\varepsilon,1}_{j}\|_{H^{1}}\lesssim h^{2}+\tau^{2},
\end{align}
for sufficiently small $0<\tau<\tau_{1}$ and $0<h<h_{1}$. So it is true for $n=0, 1$. Next, we prove the Theorem \ref{CNFDBerr} for $2\leq n \leq \frac{T}{\tau}-1$.

Multiplying both sides of $(\ref{energyB})$ by $h(\chi^{\varepsilon,n+1}_{j}-\chi^{\varepsilon,n-1}_{j})$, summing up for $j$, applying Lemma $\ref{trunerrBlem}$ and Lemma $\ref{nonerrBlem}$ and making use of the Young's inequality, we have
\begin{align}
\begin{split}
\hat{E}^{n}-\hat{E}^{n-1}&=h\sum^{N-1}_{j=0}(\hat{\xi}^{\varepsilon,n}_{j}-\hat{\eta}^{\varepsilon,n}_{j})(\chi^{\varepsilon,n+1}_{j}-\chi^{\varepsilon,n-1}_{j})\\
&\leq h\sum^{N-1}_{j=0}\left(\left|\hat{\xi}^{\varepsilon,n}_{j}\right|+\left|\hat{\eta}^{\varepsilon,n}_{j}\right|\right)\left|\chi^{\varepsilon,n+1}_{j}-\chi^{\varepsilon,n-1}_{j}\right|\\
&= \tau h\sum^{N-1}_{j=0}\left(\left|\hat{\xi}^{\varepsilon,n}_{j}\right|+\left|\hat{\eta}^{\varepsilon,n}_{j}\right|\right)\left|\delta^{+}_{t}\chi^{\varepsilon,n+1}_{j}+\delta^{+}_{t}\chi^{\varepsilon,n-1}_{j}\right|\\
&\leq \tau \left[\left(\left\|\hat{\xi}^{\varepsilon,n}\right\|^{2}_{l^{2}}+\left\|\hat{\eta}^{\varepsilon,n}\right\|^{2}_{l^{2}}\right)+\left\|\delta^{+}_{t}\chi^{\varepsilon,n}\right\|^{2}_{l^{2}}+\left\|\delta^{+}_{t}\chi^{\varepsilon,n-1}\right\|^{2}_{l^{2}}\right]\\
&\lesssim \tau \bigg[\left(h^{2}+\frac{\tau^{2}}{\varepsilon^{2}}\right)^{2}+\frac{1}{\varepsilon}\left(\| \chi^{\varepsilon,n+1}_{j}\|^{2}_{l^{2}}+ \|\chi^{\varepsilon,n-1}_{j}\|^{2}_{l^{2}}\right)\\
&~~~+\|\delta^{+}_{t}\chi^{\varepsilon,n}_{j}\|^{2}_{l^{2}}+ \|\delta^{+}_{t}\chi^{\varepsilon,n-1}_{j}\|^{2}_{l^{2}}\bigg]\\
&\lesssim \tau \left[\left(h^{2}+\frac{\tau^{2}}{\varepsilon^{2}}\right)^{2}+\frac{1}{\varepsilon} \left(\hat{E}^{n}+\hat{E}^{n-1} \right)\right],~n\geq1.
\end{split}
\end{align}
Therefore, there exists a constant $\tau_{2}<\varepsilon$ sufficiently small and independent of $h$, such that when $0<\tau\leq \tau_{2}$, we get
\begin{align}
\hat{E}^{n}-\hat{E}^{n-1}\lesssim \tau \left[\frac{1}{\varepsilon} \hat{E}^{n-1}+\left(h^{2}+\frac{\tau^{2}}{\varepsilon^{2}}\right)^{2}\right],~n\geq1.
\end{align}
Summing the above inequality for time steps from $1$ to $n$, when $\tau<\frac{1}{2}$, it follows that
\begin{align}
\hat{E}^{n}-\hat{E}^{0}
\lesssim\frac{\tau}{\varepsilon}\sum_{m=0}^{n-1}\hat{E}^{m}+\left( h^{2}+\frac{\tau^{2}}{\varepsilon^{2}}\right)^{2},~1\leq n\leq \frac{T}{\tau}-1.
\end{align}
Besides, noticing
\begin{align}
\hat{E}^{0}=\|\delta_{t}^{+}\chi^{\varepsilon,0}\|_{l^{2}}^{2}+\frac{1}{2}\left( \|\delta_{x}^{+}\chi^{\varepsilon,0}\|_{l^{2}}^{2}+\|\delta_{x}^{+}\chi^{\varepsilon,1}\|_{l^{2}}^{2}\right)+\frac{1}{2}\left( \|\chi^{\varepsilon,0}\|_{l^{2}}^{2}+\|\chi^{\varepsilon,1}\|_{l^{2}}^{2}\right).
\end{align}
According to (\ref{localtrunerror}), (\ref{energyB1}), (\ref{energyB2}), we get
\begin{align}
\hat{E}^{0}\lesssim (h^{2}+\tau^{2})^{2}.
\end{align}
With the Gronwall's inequality  \cite{holte2009discrete}, there exists a constant $\tau_{3}$ sufficiently small and independent of $h$, such that when $0<\tau\leq \tau_{3}$, we obtain
\begin{align}
\hat{E}^{n}\lesssim e^{ \frac{T}{\varepsilon}}\left(h^{2}+\frac{\tau^{2}}{\varepsilon^{2}}\right)^{2},~1\leq n\leq \frac{T}{\tau}-1.
\end{align}
Noticing $\|\chi^{\varepsilon,n}\|^{2}_{l^{2}}+\|\delta^{+}_{x}\chi^{\varepsilon,n}\|^{2}_{l^{2}}\leq 2\hat{E}^{n}$, we can obtain
\begin{align}
\|\chi^{\varepsilon,n}\|_{l^{2}}+\|\delta^{+}_{x}\chi^{\varepsilon,n}\|_{l^{2}}\lesssim e^{\frac{T}{2\varepsilon}}\left(h^{2}+\frac{\tau^{2}}{\varepsilon^{2}}\right),~1\leq n\leq \frac{T}{\tau}-1.
\end{align}
Besides, by the discrete Sobolev inequality, the following holds
\begin{align}
\begin{split}
\|\chi^{\varepsilon,n}\|_{l^{\infty}}\leq &\|\chi^{\varepsilon,n}\|_{l^{2}}+\|\delta^{+}_{x}\chi^{\varepsilon,n}\|_{l^{2}}\\
\lesssim &e^{\frac{T}{2\varepsilon}}\left(h^{2}+\frac{\tau^{2}}{\varepsilon^{2}}\right),~1\leq n\leq \frac{T}{\tau}-1.
\end{split}
\end{align}
Therefore, there exists a constant $\tau_{4}>0, h_{2}>0$ sufficiently small when $0<\tau\leq \tau_{4},0<h\leq h_{2}$, we obtain
\begin{align}
\|\hat{u}^{n}\|_{l^{\infty}}\leq \|u(x,t_{n})\|_{L^{\infty}}+\|\chi^{\varepsilon,n}\|_{l^{\infty}}\leq \Lambda+1.
\end{align}
We complete the proof by choosing $h_{0}=\min\{h_{1},h_{2}\},\tau_{0}=\min\{\tau_{1},\tau_{2},\tau_{3},\tau_{4}\}$.
\end{proof}

\begin{proof}({\bf {Proof of Theorem \ref{CNFDerr}}})
Recalling the definition of $\rho$, Theorem \ref{CNFDBerr} implies that (\ref{CNFDB}) collapses to (\ref{CNFD}). According to the unique solvability of the CNFD scheme (\ref{CNFD}), $\hat{u}^{\varepsilon,n}$ is identical to $u^{\varepsilon,n}$. Therefore Theorem \ref{CNFDerr} is a direct consequence of Theorem \ref{CNFDBerr}.
\end{proof}
%%%%%%%%%%%%%%%%%%%%%%%%%%%%%%%%%
\subsection{Proof of Theorem \ref{SIFD1err} for the SIEFD}
%%%%%%%%%%%%%%%%%
Determine another approximation to $u(x_{j},t_{n})$ of the scheme SIEFD (\ref{SIFD1}) from
\begin{align}\label{SIFD1B}
\delta_{t}^{2}\hat{u}^{\varepsilon,n}_{j}-\delta_{x}^{2} \hat{u}^{\varepsilon,n}_{j}+\frac{1}{2} (\hat{u}^{\varepsilon,n+1}_{j}+\hat{u}^{\varepsilon,n-1}_{j})+ G_{_B}(\hat{u}^{\varepsilon,n+1}_{j},\hat{u}^{\varepsilon,n-1}_{j})=0,~~ j\in \mathcal{T}_{N};
\end{align}
\begin{theorem}\label{SIFD1Berr}
Assume  $\tau\lesssim h$ and under the assumption (A), there exist $h_{0}>0,\tau_{0}>0$  sufficiently small and independent of $\varepsilon,$  for any $0<\varepsilon \ll1$, when $0<h\leq h_{0}$ and $0<\tau\leq \tau_{0}$ and under the stability condition (\ref{sifd1stability}), the SIEFD (\ref{SIFD1B}) with (\ref{initialvalue1}) and (\ref{initialvalue}) satisfies the following error estimates
\begin{align}\label{SIFD1Berror}
\|\delta_{x}^{+}\chi^{\varepsilon,n}\|_{l^{2}} + \|\chi^{\varepsilon,n}\|_{l^{2}}\lesssim e^{\frac{T}{2\varepsilon}}\left(h^{2}+\frac{\tau^{2}}{\varepsilon^{2}}\right),~~\|\hat{u}^{\varepsilon,n}\|_{l^{\infty}}\leq \Lambda+1,
\end{align}
where $\chi^{\varepsilon,n}_{j}=u^{\varepsilon}(x_{j},t_{n})-\hat{u}^{\varepsilon,n}_{j}$.
\end{theorem}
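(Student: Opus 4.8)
The plan is to follow the same route as the proof of Theorem \ref{CNFDBerr}, working with the cut-off scheme (\ref{SIFD1B}) and the cut-off nonlinearity $G_{_B}$, and to reduce everything to an energy estimate for the error $\chi^{\varepsilon,n}_{j}=u^{\varepsilon}(x_{j},t_{n})-\hat{u}^{\varepsilon,n}_{j}$. First I would record the local truncation error $\hat{\xi}^{\varepsilon,n}$ of (\ref{SIFD1B}) and observe that the only structural change from the CNFD case is that the spatially averaged Laplacian $\tfrac12\delta_{x}^{2}(u^{n+1}+u^{n-1})$ is replaced by $\delta_{x}^{2}u^{n}$. Since $\delta_{x}^{2}u^{\varepsilon}(x_{j},t_{n})-\partial_{xx}u^{\varepsilon}(x_{j},t_{n})=O(h^{2})$ and the mass term satisfies $\tfrac12\big(u^{\varepsilon}(x_{j},t_{n+1})+u^{\varepsilon}(x_{j},t_{n-1})\big)-u^{\varepsilon}(x_{j},t_{n})=O(\tau^{2})$ with an $\varepsilon$-independent constant, the truncation bounds of Lemma \ref{trunerrBlem} carry over verbatim; and because the nonlinearity $G_{_B}$ is identical, so do the nonlinear-error bounds of Lemma \ref{nonerrBlem}.

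Subtracting (\ref{SIFD1B}) from the truncation identity gives the error equation $\delta_{t}^{2}\chi^{\varepsilon,n}_{j}-\delta_{x}^{2}\chi^{\varepsilon,n}_{j}+\tfrac12(\chi^{\varepsilon,n+1}_{j}+\chi^{\varepsilon,n-1}_{j})=\hat{\xi}^{\varepsilon,n}_{j}-\hat{\eta}^{\varepsilon,n}_{j}$ for $n\geq1$, together with $\chi^{\varepsilon,0}=0$ and $\chi^{\varepsilon,1}=\tau\hat{\xi}^{\varepsilon,0}$. Taking the discrete inner product with $\chi^{\varepsilon,n+1}-\chi^{\varepsilon,n-1}$ and summing over $j$, the three linear terms telescope by Lemma \ref{innerproduct} into $\widetilde{E}^{n}-\widetilde{E}^{n-1}$, where $\widetilde{E}^{n}=\|\delta_{t}^{+}\chi^{\varepsilon,n}\|_{l^{2}}^{2}+h\sum_{j=0}^{N-1}(\delta_{x}^{+}\chi^{\varepsilon,n+1}_{j})(\delta_{x}^{+}\chi^{\varepsilon,n}_{j})+\tfrac12(\|\chi^{\varepsilon,n}\|_{l^{2}}^{2}+\|\chi^{\varepsilon,n+1}\|_{l^{2}}^{2})$ is the error analogue of the SIEFD discrete energy (\ref{SIFD1E}). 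Crucially, its semi-$H^{1}$ part is the cross product $h\sum(\delta_{x}^{+}\chi^{\varepsilon,n+1})(\delta_{x}^{+}\chi^{\varepsilon,n})$, not the sum of squares that appears for CNFD.

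The hard part is precisely that this $\widetilde{E}^{n}$ is not manifestly nonnegative, and this is where the assumption $\tau\lesssim h$ and the stability condition (\ref{sifd1stability}) become essential. I would apply the polarization identity $h\sum(\delta_{x}^{+}\chi^{\varepsilon,n+1})(\delta_{x}^{+}\chi^{\varepsilon,n})=\tfrac12\|\delta_{x}^{+}\chi^{\varepsilon,n}\|_{l^{2}}^{2}+\tfrac12\|\delta_{x}^{+}\chi^{\varepsilon,n+1}\|_{l^{2}}^{2}-\tfrac{\tau^{2}}{2}\|\delta_{x}^{+}\delta_{t}^{+}\chi^{\varepsilon,n}\|_{l^{2}}^{2}$ (equivalently Lemma \ref{innerproduct}), and then use the inverse inequality $\|\delta_{x}^{+}v\|_{l^{2}}^{2}\leq \tfrac{4}{h^{2}}\|v\|_{l^{2}}^{2}$ to estimate $\|\delta_{x}^{+}\delta_{t}^{+}\chi^{\varepsilon,n}\|_{l^{2}}^{2}\leq \tfrac{4}{h^{2}}\|\delta_{t}^{+}\chi^{\varepsilon,n}\|_{l^{2}}^{2}$. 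This yields
\[
\widetilde{E}^{n}\geq\Big(1-\frac{2\tau^{2}}{h^{2}}\Big)\|\delta_{t}^{+}\chi^{\varepsilon,n}\|_{l^{2}}^{2}+\frac12\big(\|\delta_{x}^{+}\chi^{\varepsilon,n}\|_{l^{2}}^{2}+\|\delta_{x}^{+}\chi^{\varepsilon,n+1}\|_{l^{2}}^{2}\big)+\frac12\big(\|\chi^{\varepsilon,n}\|_{l^{2}}^{2}+\|\chi^{\varepsilon,n+1}\|_{l^{2}}^{2}\big),
\]
so once $\tau\lesssim h$ with a sufficiently small constant (so that $1-2\tau^{2}/h^{2}$ is bounded below by a fixed positive number), $\widetilde{E}^{n}$ is coercive and controls $\|\delta_{x}^{+}\chi^{\varepsilon,n}\|_{l^{2}}^{2}+\|\chi^{\varepsilon,n}\|_{l^{2}}^{2}$ from below. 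This coercivity replaces the automatic positivity enjoyed by the CNFD energy and is the single genuinely new estimate needed here.

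With coercivity in hand the remainder runs exactly as in Theorem \ref{CNFDBerr}. I would bound the right-hand side $h\sum(\hat{\xi}^{\varepsilon,n}_{j}-\hat{\eta}^{\varepsilon,n}_{j})(\chi^{\varepsilon,n+1}_{j}-\chi^{\varepsilon,n-1}_{j})=\tau h\sum(\hat{\xi}^{\varepsilon,n}_{j}-\hat{\eta}^{\varepsilon,n}_{j})(\delta_{t}^{+}\chi^{\varepsilon,n}_{j}+\delta_{t}^{+}\chi^{\varepsilon,n-1}_{j})$ by Young's inequality, insert Lemma \ref{trunerrBlem} and Lemma \ref{nonerrBlem} to obtain $\widetilde{E}^{n}-\widetilde{E}^{n-1}\lesssim\tau[(h^{2}+\tau^{2}/\varepsilon^{2})^{2}+\tfrac{1}{\varepsilon}(\widetilde{E}^{n}+\widetilde{E}^{n-1})]$, and absorb the $\widetilde{E}^{n}$ term for $\tau<\varepsilon$ small. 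Checking $\widetilde{E}^{0}\lesssim(h^{2}+\tau^{2})^{2}$ from the first-step estimate (\ref{localtrunerror}) and applying the discrete Gronwall inequality gives $\widetilde{E}^{n}\lesssim e^{T/\varepsilon}(h^{2}+\tau^{2}/\varepsilon^{2})^{2}$; by coercivity this delivers $\|\chi^{\varepsilon,n}\|_{l^{2}}+\|\delta_{x}^{+}\chi^{\varepsilon,n}\|_{l^{2}}\lesssim e^{T/2\varepsilon}(h^{2}+\tau^{2}/\varepsilon^{2})$. Finally the discrete Sobolev inequality upgrades this to an $l^{\infty}$ bound on $\chi^{\varepsilon,n}$, which combined with $\|u^{\varepsilon}(\cdot,t_{n})\|_{L^{\infty}}\leq\Lambda$ yields the a posteriori bound $\|\hat{u}^{\varepsilon,n}\|_{l^{\infty}}\leq\Lambda+1$, closing the induction that legitimizes replacing $G_{\varepsilon}$ by the cut-off $G_{_B}$.
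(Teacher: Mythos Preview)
Your proposal is correct and follows the same energy-method route the paper intends: the paper's own proof of Theorem \ref{SIFD1Berr} is a one-liner that simply invokes Lemma \ref{SIFD1trunerrBlem}, Lemma \ref{nonerrBlem}, and says ``similar to Theorem \ref{CNFDBerr}''. Your write-up is in fact more complete than the paper's, because you explicitly isolate the one point where the SIEFD argument diverges from the CNFD one --- the cross-product term $h\sum_{j}(\delta_{x}^{+}\chi^{\varepsilon,n+1}_{j})(\delta_{x}^{+}\chi^{\varepsilon,n}_{j})$ in the discrete energy $\widetilde{E}^{n}$ --- and you show via the polarization identity and the inverse inequality that coercivity holds precisely under the hypothesis $\tau\lesssim h$; the paper states this hypothesis but never spells out where it is used.
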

We define the local truncation error $\hat{\xi}^{\varepsilon,n}_{j}\in X_{N}$ for $j\in \mathcal{T}_{N},~n\geq1$ as
\begin{align}\label{SIFD1trunerreB}
\begin{split}
\hat{\xi}^{\varepsilon,0}_{j}:=&\delta_{t}^{+}u^{\varepsilon}(x_{j},0)-\gamma(x_{j})-\frac{\tau}{2}\left[ \delta^{2}_{x}\phi(x_{j})-\phi(x_{j})- \phi(x_{j})\ln(\varepsilon^{2}+(\phi(x_{j}))^{2})\right],\\
\hat{\xi}^{\varepsilon,n}_{j}:=&\delta_{t}^{2}u^{\varepsilon}(x_{j},t_{n})-\delta_{x}^{2} u^{\varepsilon}(x_{j},t_{n})+\frac{1}{2} \left( u^{\varepsilon}(x_{j},t_{n+1})+u^{\varepsilon}(x_{j},t_{n-1}) \right)\\
&+G_{_B}(u^{\varepsilon}(x_{j},t_{n+1}),u^{\varepsilon}(x_{j},t_{n-1})).
\end{split}
\end{align}
\begin{lemma}\label{SIFD1trunerrBlem}
Under assumption (A), we have
\begin{align}
&\|\hat{\xi}^{\varepsilon,0}\|_{H^{1}}\lesssim h^{2}+\tau^{2},\\
&\|\hat{\xi}^{\varepsilon, n}\|_{l^{2}}\lesssim h^{2}+\frac{\tau^{2}}{\varepsilon^{2}},\\
&\|\delta^{+}_{x}\hat{\xi}^{\varepsilon, n}\|_{l^{2}}\lesssim h^{2}+\frac{\tau^{2}}{\varepsilon^{3}},~~1\leq n \leq \frac{T}{\tau}-1.
\end{align}
\end{lemma}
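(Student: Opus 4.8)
The plan is to mirror the proof of Lemma~\ref{trunerrlem} (and its cut-off analogue Lemma~\ref{trunerrBlem}), exploiting the fact that the SIEFD truncation error (\ref{SIFD1trunerreB}) differs from the CNFD truncation error (\ref{trunerreB}) \emph{only} in the Laplacian term: the SIEFD uses the un-averaged $\delta_x^2 u^{\varepsilon}(x_j,t_n)$ in place of the time-averaged $\tfrac12\delta_x^2\big(u^{\varepsilon}(x_j,t_{n+1})+u^{\varepsilon}(x_j,t_{n-1})\big)$. The temporal operator $\delta_t^2$, the linear potential term, the starter $\hat{\xi}^{\varepsilon,0}$, and the nonlinear term $G_{_B}$ are identical in the two schemes, so those estimates carry over verbatim and only the spatial piece must be re-examined.

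For the first estimate, $\hat{\xi}^{\varepsilon,0}$ is defined exactly as in the CNFD case, so a Taylor expansion of the one-step starter (\ref{initialvalue}) at $t=0$ gives $|\hat{\xi}^{\varepsilon,0}_j|\lesssim \tau^2\|\partial_t^3 u^{\varepsilon}\|_{L^\infty}+\tau h\|\partial_x^3\phi\|_{L^\infty}\lesssim h^2+\tau^2$, and the same expansion for $\delta_x^+\hat{\xi}^{\varepsilon,0}_j$ (one extra spatial derivative, still controlled by (A)) yields the $H^1$ bound $\|\hat{\xi}^{\varepsilon,0}\|_{H^1}\lesssim h^2+\tau^2$.

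For $n\geq 1$ I would subtract the equation (\ref{RLogKGE1}) evaluated at $(x_j,t_n)$ and split $\hat{\xi}^{\varepsilon,n}_j$ into four pieces: (i) $\delta_t^2 u^{\varepsilon}-\partial_{tt}u^{\varepsilon}=O(\tau^2)$ by a Taylor remainder in $\partial_t^4 u^{\varepsilon}$; (ii) $\partial_{xx}u^{\varepsilon}-\delta_x^2 u^{\varepsilon}=O(h^2)$, a single centered-difference remainder in $\partial_x^4 u^{\varepsilon}$ — this is the one genuinely new computation, and it is \emph{simpler} than in CNFD precisely because there is no time averaging to expand; (iii) $\tfrac12\big(u^{\varepsilon}(t_{n+1})+u^{\varepsilon}(t_{n-1})\big)-u^{\varepsilon}(t_n)=O(\tau^2)$; and (iv) the nonlinear remainder $G_{_B}\big(u^{\varepsilon}(t_{n+1}),u^{\varepsilon}(t_{n-1})\big)-u^{\varepsilon}(t_n)f_{\varepsilon}\big((u^{\varepsilon}(t_n))^2\big)$. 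For (iv), since $\|u^{\varepsilon}\|_{L^\infty}\le\Lambda$ and $B=(\Lambda+1)^2$, the cut-off factor $\rho_{_B}$ is identically $1$ on the exact-solution arguments, so $G_{_B}$ coincides with $G_{\varepsilon}$ of (\ref{G}) there and (iv) is word-for-word the quantity handled in Lemma~\ref{trunerrlem}: expanding $f_{\varepsilon}$ to second order about $(u^{\varepsilon}(t_n))^2$ and invoking $|f_{\varepsilon}|\le \ln(1/\varepsilon^2)$, $|f_{\varepsilon}'|\le 1/\varepsilon$, $|f_{\varepsilon}''|\lesssim 1/\varepsilon^2$ bounds its contribution by $\tau^2/\varepsilon^2$. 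Summing the four pieces gives $|\hat{\xi}^{\varepsilon,n}_j|\lesssim h^2+\tau^2/\varepsilon^2$, hence the $l^2$ estimate.

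The third estimate follows by applying $\delta_x^+$ to the same four pieces. The temporal, potential, and spatial parts each pick up one additional spatial derivative of $u^{\varepsilon}$ (bounded by (A), with the spatial part now an $O(h^2)$ difference of $\partial_x^5 u^{\varepsilon}$), while differentiating the nonlinear remainder brings in $f_{\varepsilon}'''\sim 1/\varepsilon^3$, so the worst term scales like $\tau^2/\varepsilon^3$; this reproduces the $\delta_x^+$ computation of Lemma~\ref{trunerrlem} and gives $\|\delta_x^+\hat{\xi}^{\varepsilon,n}\|_{l^2}\lesssim h^2+\tau^2/\varepsilon^3$. The only real obstacle is the book-keeping of the nonlinear remainder and its discrete derivative, but since $G_{_B}$ is shared with the CNFD scheme that work is already carried out in Lemma~\ref{trunerrlem}, and the new un-averaged Laplacian term is elementary — which is exactly why the detailed proof may be omitted.
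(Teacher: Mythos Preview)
Your proposal is correct and takes essentially the same approach as the paper, which simply states that the proof is similar to Lemma~\ref{trunerrlem} and omits the details. Your observation that only the Laplacian term differs between the CNFD and SIEFD truncation errors---and that the un-averaged $\delta_x^2 u^{\varepsilon}(x_j,t_n)$ is in fact simpler to handle (no cross term $\beta^{\varepsilon,n}_j$ arises)---is exactly the adaptation the paper has in mind, and your treatment of the shared terms (starter, $\delta_t^2$, potential, and the nonlinear $G_{_B}$ remainder via the cut-off identification with $G_\varepsilon$) is the right way to invoke Lemma~\ref{trunerrlem} verbatim.
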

\begin{proof}
The proof is similar to Lemma \ref{trunerrlem}, therefore we omit it.
\end{proof}

Next, we give the proof of Theorem \ref{SIFD1Berr}.

\begin{proof}({\bf {Proof of Theorem \ref{SIFD1Berr}}})
The proof is similar to Theorem \ref{CNFDBerr}. With the results of Lemma \ref{SIFD1trunerrBlem} and \ref{nonerrBlem}, we can immediately prove Theorem \ref{SIFD1Berr}.
\end{proof}

\begin{proof}({\bf {Proof of Theorem \ref{SIFD1err}}})
Recalling the definition of $\rho$, Theorem \ref{SIFD1Berr} implies that (\ref{SIFD1B}) collapses to (\ref{SIFD1}). According to the unique solvability of the SIEFD scheme (\ref{SIFD1}), $\hat{u}^{\varepsilon,n}$ is identical to $u^{\varepsilon,n}$. Therefore Theorem \ref{SIFD1err} is a direct consequence of Theorem \ref{SIFD1Berr}.
\end{proof}
\section{Numerical results}
In this section, we first quantify the error estimates of the regularized model and the CNFD scheme (\ref{CNFD}). Then we investigate the well simulation of the LogKGE (\ref{LogKGE}). Since the results of the SIEFD (\ref{SIFD1}) are similar to those of the CNFD (\ref{CNFD}), we omit the details for brevity.
\subsection{Accuracy test}
Here we take $d=1, \lambda=1$ and define the error functions as:
\begin{align}
&\hat{e}^{\varepsilon}(t_{n}):=u(\cdot,t_{n})-u^{\varepsilon}(\cdot,t_{n}),
 ~~~~~~e^{\varepsilon}(t_{n}):=u^{\varepsilon}(\cdot,t_{n})-u^{\varepsilon,n}, \\
 &\tilde{e}^{\varepsilon}(t_{n}):=u(\cdot,t_{n})-u^{\varepsilon,n}.%,~~~~~~~~~~~e^{\varepsilon}_{E}(t_{n}):=|E^{\varepsilon}(u^{\varepsilon})-E(u)|.
\end{align}
Besides, we denote the error functions:
\begin{align}
&e^{\varepsilon}_{\infty}(t_{n}):=\|u^{\varepsilon}(\cdot,t_{n})-u^{\varepsilon,n}\|_{l^{\infty}}, \quad  e^{\varepsilon}_{2}(t_{n}):=\|u^{\varepsilon}(\cdot,t_{n})-u^{\varepsilon,n}\|_{l^{2}}, \\
&e^{\varepsilon}_{H^{1}}(t_{n}):=\sqrt{(e^{\varepsilon}_{2}(t_{n}))^{2}+\|\delta^{+}_{x}(u^{\varepsilon}(\cdot,t_{n})-u^{\varepsilon,n})\|^{2}_{l^{2}}}.
\end{align}
Here $u, u^{\varepsilon}$ are the exact solutions of the LogKGE (\ref{LogKGE}) and the RLogKGE (\ref{RLogKGE}), $u^{n}, u^{\varepsilon,n}$ are the numerical solutions of the LogKGE (\ref{LogKGE}) and the RLogKGE (\ref{RLogKGE}).

$\mathbf{Example~1}$.
The initial data is set as $\phi(x)=e^{-\frac{k^{2}x^{2}}{2(c^{2}-k^{2})}},\gamma(x)=\frac{ckx}{c^2-k^2}e^{-\frac{(kx)^{2}}{2(c^{2}-k^{2})}}$, and the Gaussian solitary wave solution is
\begin{align}
u(x,t)=e^{-\frac{(kx-ct)^{2}}{2(c^{2}-k^{2})}},
\end{align}\label{Gaussion}
where $c = 2, k=1$. The RLogKGE (\ref{RLogKGE}) is simulated on the interval $\Omega=[-16,16]$ with periodic boundary conditions. The `exact' solution $u^{\varepsilon}$ is obtained numerically by the CNFD (\ref{CNFD}) scheme with very fine time step and mesh size $\tau=0.01\times2^{-9}, h=2^{-10}$.

%%%%%%%
\subsubsection{Convergence of the regularized model}
%%%%%%%

Here we test the convergence rate between the solutions of the RLogKGE (\ref{RLogKGE}) and the LogKGE (\ref{LogKGE}).  Figure \ref{fig:regexhat} depicts $\|\hat{e}^{\varepsilon}\|_{l^{2}}, \|\hat{e}^{\varepsilon}\|_{l^{\infty}},\|\hat{e}^{\varepsilon}\|_{H^{1}}$ at $t=0.5$ with different values of $\varepsilon$.

From Figure \ref{fig:regexhat}, we can conclude that the solutions of the RLogKGE (\ref{RLogKGE}) converge linearly to the LogKGE (\ref{LogKGE}) with regard to $\varepsilon$, and the convergence rate is $O(\varepsilon)$ in the $l^{2}$-norm, $l^{\infty}$-norm, $H^{1}$-norm.
\begin{center}
\begin{figure}[htbp]
\centering
\subfigure{\includegraphics[width=.43\textwidth, height=0.38\textwidth]{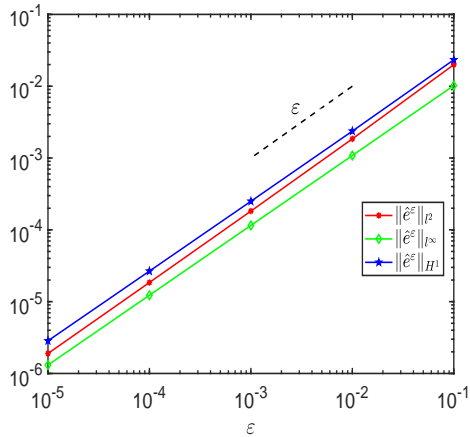}}
\caption{The errors $\hat{e}^{\varepsilon}(0.5)$ in three different norms with the scheme  CNFD $(\ref{CNFD})$.}
\label{fig:regexhat}
\end{figure}
\end{center}
%%%%%%%
\subsubsection{Convergence of FDTD to the RLogKGE}
%%%%%%%

Then we measure the error $e^{\varepsilon}$ of CNFD (\ref{CNFD})
 to the RLogKGE (\ref{RLogKGE})  for various mesh size $h$, time step $\tau$ under any fixed parameter $0<\varepsilon\ll1$ .

Firstly, we are concerned about the temporal errors of the CNFD (\ref{CNFD}) in the $l^{2}$-norm, $l^{\infty}$-norm, $H^{1}$-norm at $t=1$. The results are displayed in Table \ref{CNFDtoRlogtime} with fixed mesh size $h=2^{-10}$, and varying time step $\tau_{j}=0.1\times2^{-j}$ for $j=1,\ldots,5$ with three different values of $\varepsilon=0.1/2^{1},0.1/2^{3},0.1/2^{15}$.

Secondly, we investigate the spatial accuracy of the CNFD (\ref{CNFD}) at $t=1$, we set time step $\tau=0.01\times{2^{-9}}$, such that the errors from the time discretization are ignored then we solve the RLogKGE (\ref{RLogKGE}) with the CNFD versus mesh size $h=0.5\times2^{-j},j=1,\ldots,5$. The results are displayed in Table \ref{CNFDtoRlogspace} which tabulates $\|e^{\varepsilon}\|_{l^{2}}, \|e^{\varepsilon}\|_{l^{\infty}},\|e^{\varepsilon}\|_{H^{1}}$ with different $h$ for Example~1.

From Table \ref{CNFDtoRlogtime} and Table \ref{CNFDtoRlogspace}, we can make the observations:

(i) The scheme CNFD (\ref{CNFD}) are uniformly second order accurate which are almost independent of $\varepsilon$ in both temporal and spatial discretizations for the RLogKGE (\ref{RLogKGE}). %We note that $\frac{\tau^{2}}{\varepsilon^{2}}$ temporal error comes from the nonlinearty $G_{\varepsilon}(u^{\varepsilon,n+1}_{j},u^{\varepsilon,n-1}_{j})$ of the CNFD, if the logarithmic term is singular at $x=0$, the order will be dependent on $\varepsilon$ \cite{bao2019regularized}. Besides, to recover the classical convergence order in time direction, the initial datum is required to be more regular.

 (ii) The numerical results agree and confirm the analytical results in the Theorem \ref{CNFDerr}.

\begin{table}
\centering
\caption{Temporal errors of the CNFD (\ref{CNFD}) scheme to the RLogKGE (\ref{RLogKGE}) at $t=1$}
\begin{tabular}{cccccccccccc}
\toprule  %添加表格头部粗线
$\|e^{\varepsilon} (1)\|_{l^{2}} $ &$\tau=0.1$ & $\tau/2 $& $\tau/2^{2}$ & $\tau/2^{3} $& $\tau/2^{4}$ &$\tau/2^{5}$&\\
\midrule  %添加表格中横线
$\varepsilon=0.1/2^{1} $&1.15E-03 &2.94E-04& 7.43E-05 &1.87E-05 & 4.70E-06&1.19E-07\\
 rate   &--& 1.96 & 1.98 & $1.99$ &1.99&1.98\\\hline
$\varepsilon=0.1/2^{3} $&1.15E-03 &2.96E-04& 7.49E-05 &1.88E-05 & 4.73E-06&1.20E-07\\
 rate   &--&  1.96 & 1.98 & $1.99$ &1.99&1.99\\\hline
%$\varepsilon=0.1/2^{5} $&1.15E-03 &2.96E-04& 7.49E-05 &1.89E-05 & 4.74E-06&1.20E-07\\
% rate   &--&  1.96 & 1.98 & $1.99$ &1.99&1.99\\\hline
 $\varepsilon=0.1/2^{15} $&3.22E-03 &8.20E-04& 2.07E-04 &5.20E-05 & 1.31E-05&3.31E-06\\
 rate   &--&  1.97 & 1.99 & $1.99$ &1.99&1.98\\
%\bottomrule %添加表格底部粗线
\toprule  %添加表格头部粗线
$\|e^{\varepsilon}(1) \|_{l^{\infty}} $  &$\tau=0.1$ & $\tau/2 $& $\tau/2^{2}$ & $\tau/2^{3} $& $\tau/2^{4}$ &$\tau/2^{5}$&\\
\midrule  %添加表格中横线
$\varepsilon=0.1/2^{1} $&7.90E-04 &2.04E-04& 5.17E-05 &1.30E-05 & 3.28E-06&8.35E-07\\
 rate   &--& 1.96 & 1.98 & $1.99$ &1.99&1.98\\\hline
$\varepsilon=0.1/2^{3} $&7.90E-04 &2.04E-04& 5.17E-05 &1.30E-05 & 3.28E-06&8.34E-07\\
 rate   &--& 1.96 & 1.98 & $1.99$ &1.99&1.98\\\hline
%$\varepsilon=0.1/2^{5} $&7.90E-04 &2.04E-04& 5.17E-05 &1.30E-05 & 3.28E-06&8.34E-07\\
% rate   &--& 1.96 & 1.98 & $1.99$ &1.99&1.98\\\hline
 $\varepsilon=0.1/2^{15} $&1.94E-03 &4.91E-04& 1.23E-04 &3.10E-05 & 7.80E-06&1.99E-06\\
 rate   &--& 1.96 & 1.98 & $1.99$ &1.99&1.97\\
%\bottomrule %添加表格底部粗线
\toprule  %添加表格头部粗线
$\|e^{\varepsilon}(1) \|_{H^{1}} $ &$\tau=0.1$ & $\tau/2 $& $\tau/2^{2}$ & $\tau/2^{3} $& $\tau/2^{4}$ &$\tau/2^{5}$&\\
\midrule  %添加表格中横线
$\varepsilon=0.1/2^{1} $&1.15E-03 &2.94E-04& 7.43E-05 &1.87E-05 & 4.71E-06 &1.23E-06\\
 rate   &--& $1.96$ & 1.98 & $1.99$ &1.99&1.93\\\hline
$\varepsilon=0.1/2^{3} $&1.15E-03 &2.96E-04& 7.48E-05 &1.88E-05 & 4.75E-06 &1.24E-06\\
 rate   &--& $1.96$ & 1.98 & $1.99$ &1.99&1.93\\\hline
%$\varepsilon=0.1/2^{5} $&1.15E-03 &2.96E-04& 7.49E-05 &1.89E-05 & 4.75E-06 &1.24E-06\\
% rate   &--& $1.96$ & 1.98 & $1.99$ &1.99&1.93\\\hline
 $\varepsilon=0.1/2^{15} $&3.22E-03 &8.20E-04& 2.07E-04 &5.20E-05 & 1.31E-05 &3.41E-06\\
 rate   &--& $1.97$ & 1.99 & $1.99$ &1.99&1.94\\
\bottomrule %添加表格底部粗线
\end{tabular}
\label{CNFDtoRlogtime}
\end{table}
\begin{table}
\centering
\caption{Spatial errors of the CNFD (\ref{CNFD}) scheme to the RLogKGE (\ref{RLogKGE}) at $t=1$}
\begin{tabular}{cccccccccccc}
\toprule  %添加表格头部粗线
$\|e^{\varepsilon} (1)\|_{l^{2}} $ &$h=0.5$ & $h/2 $& $h/2^{2}$ & $h/2^{3} $& $h/2^{4}$ &$h/2^{5}$&\\
\midrule  %添加表格中横线
$\varepsilon=0.1/2^{1} $&3.43E-03 &8.61E-04& 2.16E-04 &5.39E-05 & 1.35E-05&3.38E-06\\
 rate   &--& 1.99 & 2.00 & $2.00$ &2.00&2.00\\\hline
$\varepsilon=0.1/2^{3} $&3.46E-03&8.70E-04 &2.18E-04& 5.44E-05 &1.36E-05 & 3.41E-06\\
 rate   &--&  1.99 & 2.00 & $2.00$ &2.00&2.00\\\hline
%$\varepsilon=0.1/2^{5} $&1.15E-03 &2.96E-04& 7.49E-05 &1.89E-05 & 4.74E-06&1.20E-07\\
% rate   &--&  1.96 & 1.98 & $1.99$ &1.99&1.99\\\hline
 $\varepsilon=0.1/2^{15} $&3.49E-03&8.78E-04 &2.20E-04& 5.49E-05 &1.37E-05 & 3.44E-06\\
 rate   &--&  1.99 & 2.00 & $2.00$ &2.00&2.00\\
%\bottomrule %添加表格底部粗线
\toprule  %添加表格头部粗线
$\|e^{\varepsilon}(1) \|_{l^{\infty}} $  &$h=0.5$ & $h/2 $& $h/2^{2}$ & $h/2^{3} $& $h/2^{4}$ &$h/2^{5}$&\\
\midrule  %添加表格中横线
$\varepsilon=0.1/2^{1} $&1.87E-03 &4.90E-04&1.23E-04& 3.06E-05 &7.65E-06 & 1.97E-06\\
 rate   &--& 1.94 & 2.00 & $2.01$ &2.00&1.98\\\hline
$\varepsilon=0.1/2^{3} $&1.88E-03 &4.90E-04&1.23E-04& 3.06E-05 &7.65E-06 & 1.97E-06\\
 rate   &--& 1.94 & 2.00 & $2.01$ &2.00&1.96\\\hline
%$\varepsilon=0.1/2^{5} $&7.90E-04 &2.04E-04& 5.17E-05 &1.30E-05 & 3.28E-06&8.34E-07\\
% rate   &--& 1.96 & 1.98 & $1.99$ &1.99&1.98\\\hline
 $\varepsilon=0.1/2^{15} $&1.88E-03 &4.90E-04& 1.23E-04 &3.06E-05 & 7.65E-06&1.97E-06\\
 rate   &--& 1.94 & 2.00 & $2.01$ &2.00&1.96\\
%\bottomrule %添加表格底部粗线
\toprule  %添加表格头部粗线
$\|e^{\varepsilon}(1) \|_{H^{1}} $ &$h=0.5$ & $h/2 $& $h/2^{2}$ & $h/2^{3} $& $h/2^{4}$ &$h/2^{5}$&\\
\midrule  %添加表格中横线
$\varepsilon=0.1/2^{1} $&5.10E-03 &1.29E-03& 3.24E-04 &8.09E-05 & 2.02E-05 &5.06E-06\\
 rate   &--& $1.98$ & 2.00 & $2.00$ &2.00&2.00\\\hline
$\varepsilon=0.1/2^{3} $&5.15E-03 &1.30E-03& 3.27E-04 &8.17E-05 & 2.04E-05 &5.12E-06\\
 rate   &--& $1.98$ & 2.00 & $2.00$ &2.00&2.00\\\hline
%$\varepsilon=0.1/2^{5} $&1.15E-03 &2.96E-04& 7.49E-05 &1.89E-05 & 4.75E-06 &1.24E-06\\
% rate   &--& $1.96$ & 1.98 & $1.99$ &1.99&1.93\\\hline
 $\varepsilon=0.1/2^{15} $&5.17E-03 &1.31E-03& 3.28E-04 &8.20E-05 & 2.05E-05 &5.14E-06\\
 rate   &--& $1.98$ & 2.00 & $2.00$ &2.00&2.00\\
\bottomrule %添加表格底部粗线
\end{tabular}
\label{CNFDtoRlogspace}
\end{table}
\subsubsection{Convergence of the FDTD to the LogKGE}
%%%%%%%

Here we report the convergence rates of the CNFD (\ref{CNFD}) to the LogKGE (\ref{LogKGE}) for Example~1. Table \ref{CNFDHe} displays $l^{2}$-norm, $l^{\infty}$-norm, $H^{1}$-norm of $\tilde{e}^{\varepsilon}(1)$ for various mesh size $h$, time step $\tau$ and parameter $\varepsilon$, respectively. From Table \ref{CNFDHe}, we can draw the following conclusions:

(i): The CNFD (\ref{CNFD}) converges to the LogKGE (\ref{LogKGE}) at the rate $O(h^{2}+\tau^{2})$ only when $\varepsilon\lesssim \tau^{2}$ and $\varepsilon\lesssim h^{2}$ (cf. lower triangles below the diagonal which in bold letter in Table \ref{CNFDHe});

(ii): When $\tau^{2}\lesssim \varepsilon$ and $h^{2}\lesssim \varepsilon$ (cf. the right most column of the Table \ref{CNFDHe}), the RLogKGE (\ref{RLogKGE}) converges linearly to the LogKGE (\ref{LogKGE}) at $O(\varepsilon)$.

\begin{table}
\centering
\caption{The convergence of the CNFD (\ref{CNFD}) scheme to the LogKGE (\ref{LogKGE}) with different $\tau,h,\varepsilon$ at $t=1$}
\begin{tabular}{cccccccccccc}
\toprule  %添加表格头部粗线
$\|\tilde{e}^{\varepsilon} (1)\|_{l^{2}} $ &$  h=0.1 $ & $ h/2   $& $h/2^{2}   $ & $h/2^{3}    $& $h/2^{4}   $ &$h/2^{5}   $& \\
$ $ &$\tau=0.1$ & $\tau/2 $& $\tau/2^{2}$ & $\tau/2^{3} $& $\tau/2^{4}$ &$\tau/2^{5}$&\\
\midrule  %添加表格中横线
$\varepsilon=10^{-3} $&$\mathbf{1.23E}$-$\mathbf{02}$ &3.18E-03& 1.32E-03 &1.22E-03 & 1.24E-03&1.25E-03\\
 rate   &--& 1.95 & 1.27 & $0.11$ &-0.03&-0.01\\\hline
$\varepsilon/4$ &1.25E-02 &$\mathbf{3.22E}$-$\mathbf{03}$ &8.51E-04&3.85E-04 & 3.53E-04 &3.55E-04\\
 rate   &--& $\mathbf{1.95}$ & $1.92$ & $1.14$ &0.13&-0.01\\\hline
$\varepsilon/4^{2}$&1.25E-02 &3.24E-03& $\mathbf{8.26E}$-$\mathbf{04}$ &2.25E-04 & 1.11E-04 &1.02E-04\\
 rate   &--& $1.95$ &$\mathbf{ 1.97}$ & $1.88$ &1.02&0.12\\\hline
$\varepsilon/4^{3}$ &1.25E-02 &3.25E-03& 8.27E-04 &$\mathbf{2.10E}$-$\mathbf{04}$ & 5.90E-05 &3.15E-05\\
 rate   &--& $1.94$ & 1.97 & $\mathbf{1.98}$ &1.83&0.90\\\hline
$\varepsilon/4^{4}$ &1.25E-02 &3.25E-03& 8.27E-04 &2.09E-04 & $\mathbf{5.30E}$-$\mathbf{05}$ &1.54E-05\\
 rate   &--& $1.94$ & 1.97 & $1.99$ &$\mathbf{1.98}$&1.78\\
%\bottomrule %添加表格底部粗线
\toprule  %添加表格头部粗线
$\|\tilde{e}^{\varepsilon}(1) \|_{l^{\infty}} $ &$  h=0.1 $ & $ h/2   $& $h/2^{2}   $ & $h/2^{3}    $& $h/2^{4}   $ &$h/2^{5}   $& \\
$ $ &$\tau=0.1$ & $\tau/2 $& $\tau/2^{2}$ & $\tau/2^{3} $& $\tau/2^{4}$ &$\tau/2^{5}$&\\
\midrule  %添加表格中横线
$\varepsilon=10^{-3} $&7.61E-03 &$\mathbf{1.96E}$-$\mathbf{03}$& 6.92E-04 &7.54E-04 & 7.71E-04 &7.75E-04\\
 rate   &--& $\mathbf{1.96}$ & 1.50 & -0.12 &-0.03&-0.01\\\hline
$\varepsilon/4$ &7.61E-03 &1.96E-03& $\mathbf{4.96E}$-$\mathbf{04}$ &2.19E-04 & 2.27E-04 &2.29E-04\\
 rate   &--& $1.96$ & $\mathbf{1.98}$ & 1.18 &-0.05&-0.01\\\hline
$\varepsilon/4^{2}$&7.61E-03 &1.96E-03& 4.96E-04 &$\mathbf{1.25E}$-$\mathbf{04}$ & 6.64E-05 &6.73E-05\\
 rate   &--& $1.96$ & 1.98 & $\mathbf{1.99}$ &0.91&-0.02\\\hline
$\varepsilon/4^{3}$ &7.61E-03 &1.96E-03& 4.96E-04 &1.25E-04 & $\mathbf{3.12E}$-$\mathbf{04}$ &1.98E-05\\
 rate   &--& $1.96$ & 1.98 & $1.99$ &$\mathbf{2.00}$&0.66\\\hline
$\varepsilon/4^{4}$ &7.61E-03 &1.96E-03& 4.96E-04 &1.25E-04 & 3.12E-05 &$\mathbf{7.82E}$-$\mathbf{06}$\\
 rate   &--& $1.96$ & 1.98 & $1.99$ &2.00&$\mathbf{2.00}$\\
%\bottomrule %添加表格底部粗线
\toprule  %添加表格头部粗线
$\|\tilde{e}^{\varepsilon}(1) \|_{H^{1}} $ &$  h=0.1 $ & $ h/2   $& $h/2^{2}   $ & $h/2^{3}    $& $h/2^{4}   $ &$h/2^{5}   $& \\
$ $ &$\tau=0.1$ & $\tau/2 $& $\tau/2^{2}$ & $\tau/2^{3} $& $\tau/2^{4}$ &$\tau/2^{5}$&\\
\midrule  %添加表格中横线
$\varepsilon=10^{-3} $&$\mathbf{1.79E}$-$\mathbf{02}$ &4.73E-03& 1.87E-03 &1.59E-03 & 1.59E-03 &1.59E-03\\
 rate   &--& $1.91$ & 1.34 & $0.24$ &0.00&0.00\\\hline
$\varepsilon/4$ &1.79E-02  &$\mathbf{4.64E}$-$\mathbf{03}$& 1.25E-03 &5.43E-04 & 4.68E-04 &4.64E-04\\
 rate   &--& $\mathbf{1.95}$ & $1.89$ & $1.20$ &0.22&0.01\\\hline
$\varepsilon/4^{2}$&1.79E-02 &4.63E-03& $\mathbf{1.18E}$-$\mathbf{03}$ &3.26E-04& 1.55E-04 &1.37E-04\\
 rate   &--& $1.95$ & $\mathbf{1.97}$ & $1.86$ &1.08&0.18\\\hline
$\varepsilon/4^{3}$ &1.79E-02 &4.62E-03& 1.18E-04 &$\mathbf{2.99E}$-$\mathbf{04}$ & 8.46E-05 &4.39E-05\\
 rate   &--& $1.95$ & 1.98 & $\mathbf{1.97}$ &$1.82$&0.95\\\hline
$\varepsilon/4^{4}$ &1.79E-02 &4.62E-03& 1.17E-03 &2.96E-04 & $\mathbf{7.53E}$-$\mathbf{05}$ &2.20E-05\\
 rate   &--& $1.95$ & 1.98 & $1.99$ &$\mathbf{1.98}$&1.78\\
\bottomrule %添加表格底部粗线
\end{tabular}
\label{CNFDHe}
%\end{center}
\end{table}

\subsection{The simulation of numerical solution}
%%%%%%%
In this section, we apply the CNFD sheme (\ref{CNFD}) to quantify the simulation of the LogKGE (\ref{LogKGE}).

{\bf Example 2}: Taking the initial solution as
\begin{align}
\phi(x)=\cos(\pi x),~\gamma(x)=\sin(\pi x),~x\in[-1,1],
\end{align}
and the grid as $\tau=0.01, h=2^{-6}$. We picture the waveforms and the energy error obtained by the scheme CNFD (\ref{CNFD}) during $t\in [0,10]$ in Figs. \ref{fig:case2_energyerror}, \ref{fig:case2_u}. From the two pictures, we can see that the solution is stable and the energy is well conserved.

\begin{center}
\begin{figure}[htbp]
\centering
\subfigure{\includegraphics[width=.33\textwidth, height=0.30\textwidth]{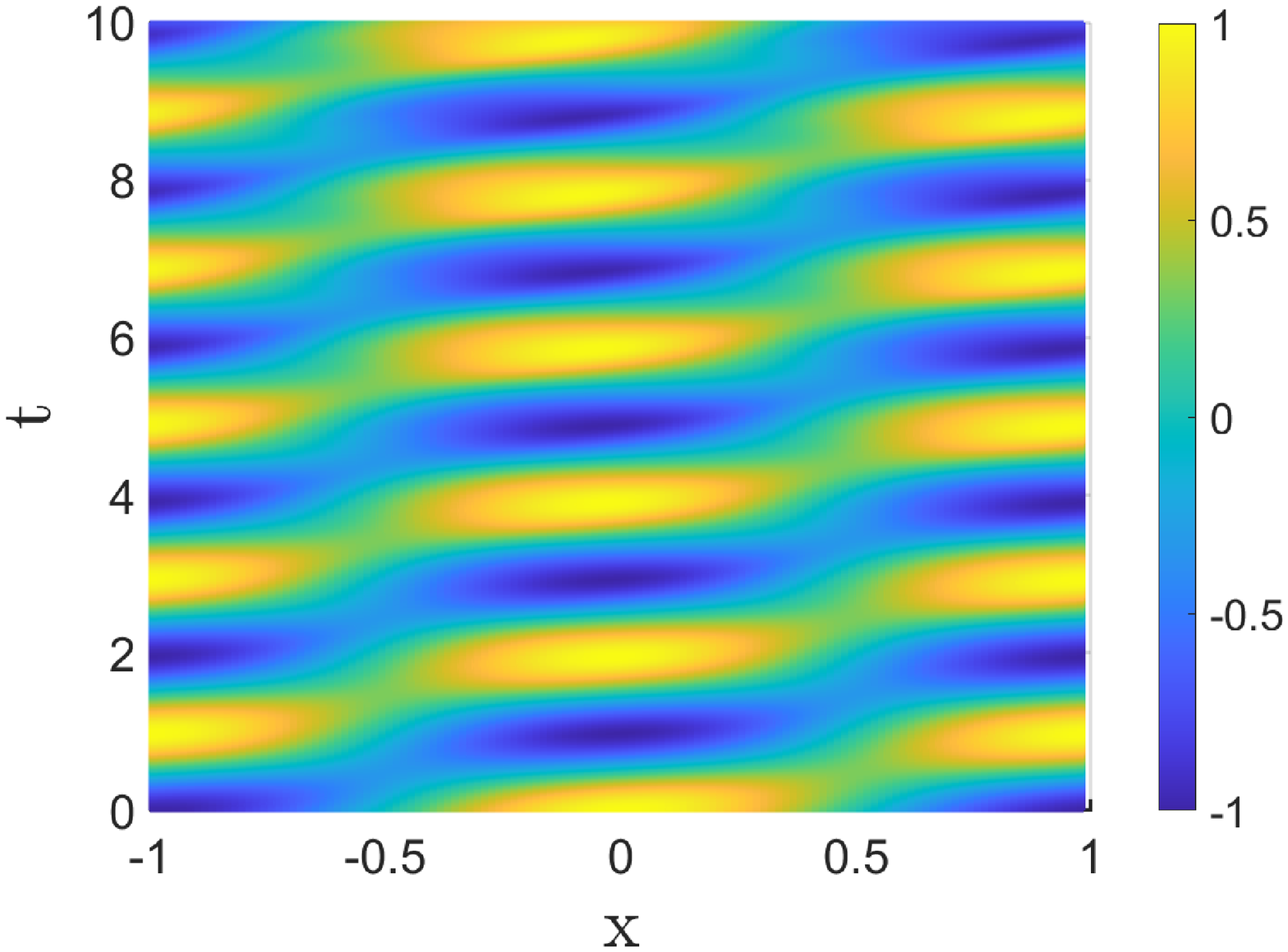}}~
\subfigure{\includegraphics[width=.33\textwidth, height=0.30\textwidth]{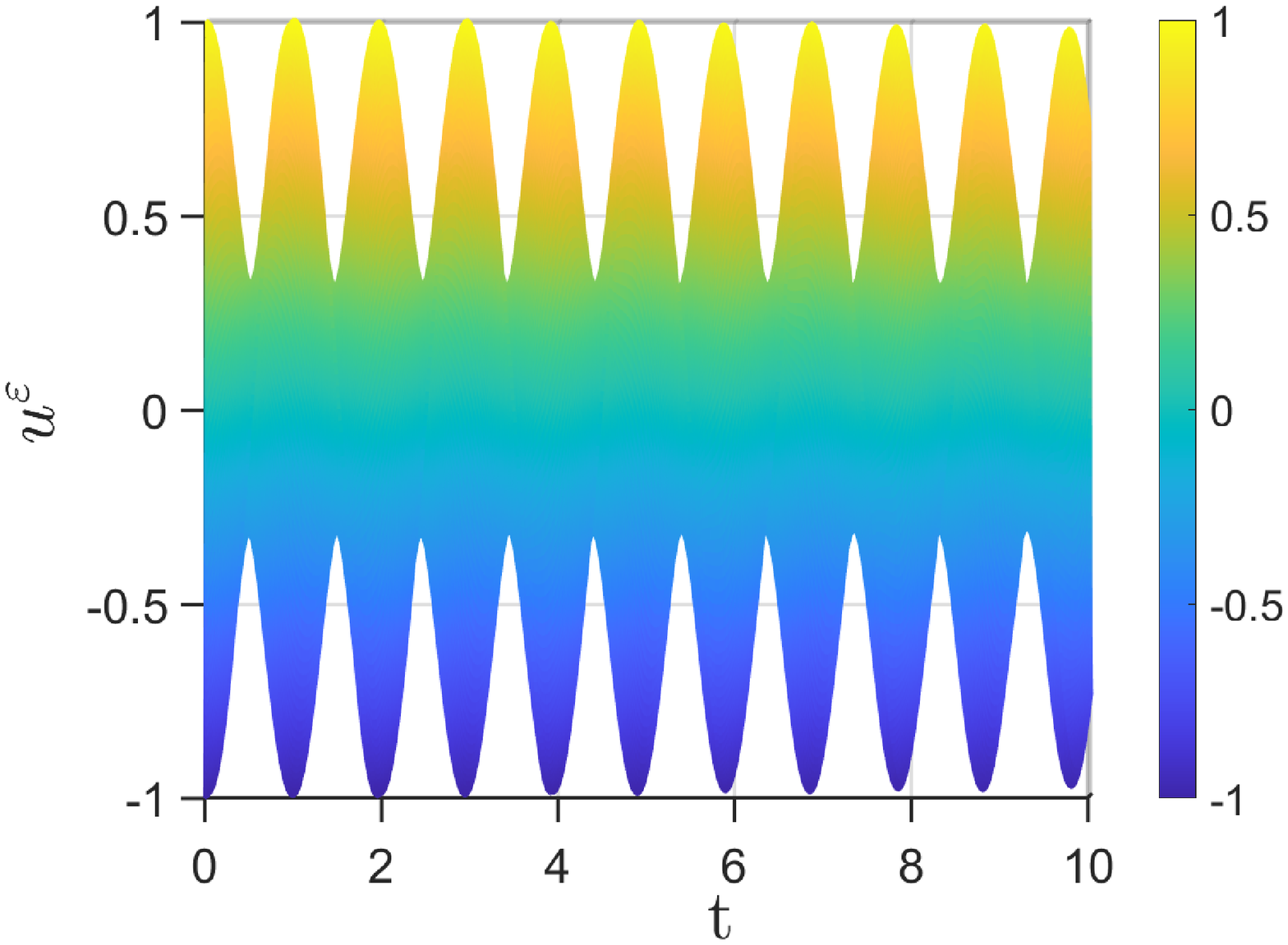}}~~
\subfigure{\includegraphics[width=.31\textwidth, height=0.30\textwidth]{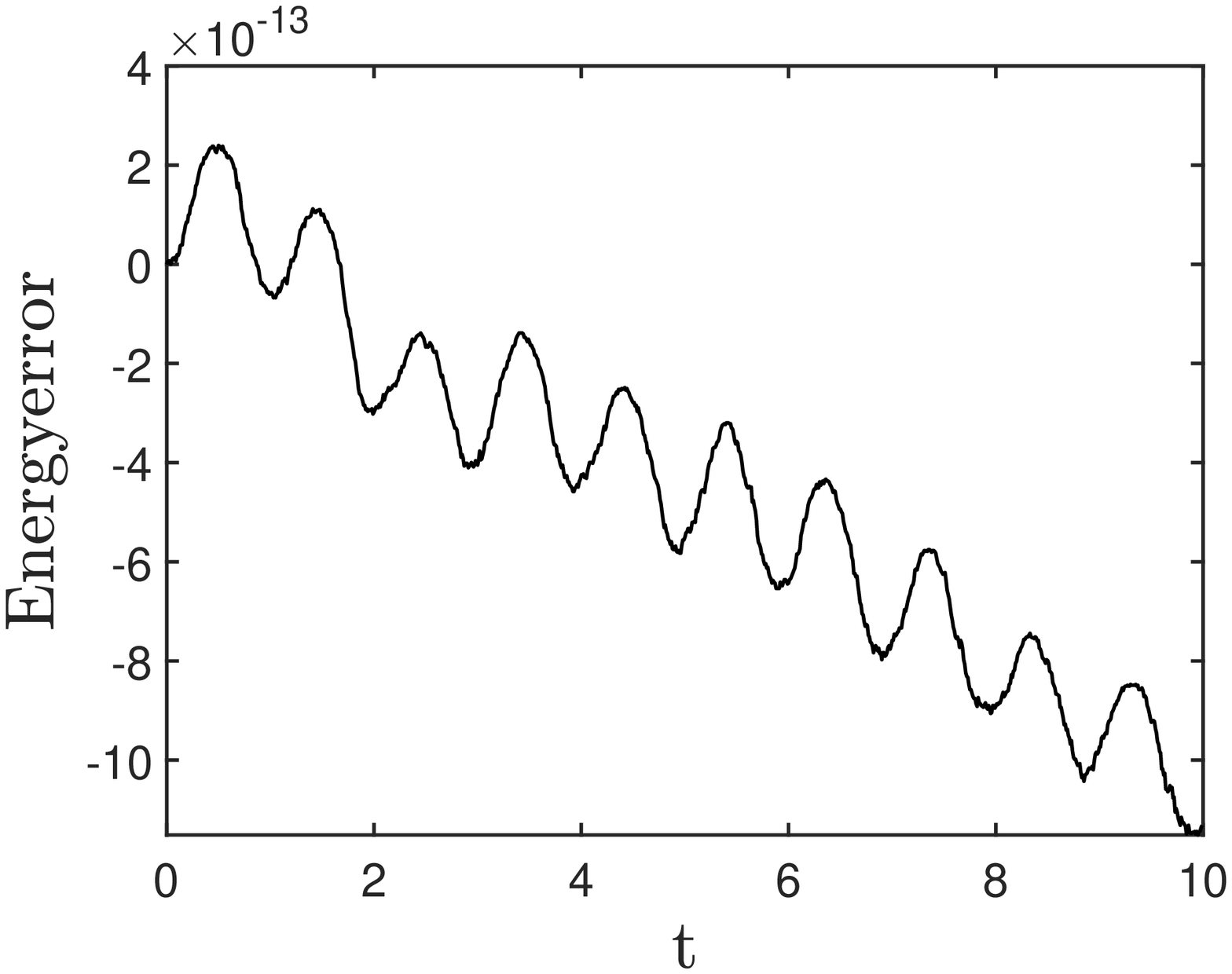}}
\caption{The evolution of the waveform (left), the shift of profile of $u^{\varepsilon}$ (middle), the energy error in $t\in [0,10]$ (right).}
\label{fig:case2_energyerror}
\end{figure}
\end{center}
\begin{center}
\begin{figure}[htbp]
\centering
\subfigure[t=0]{\includegraphics[width=.33\textwidth, height=0.30\textwidth]{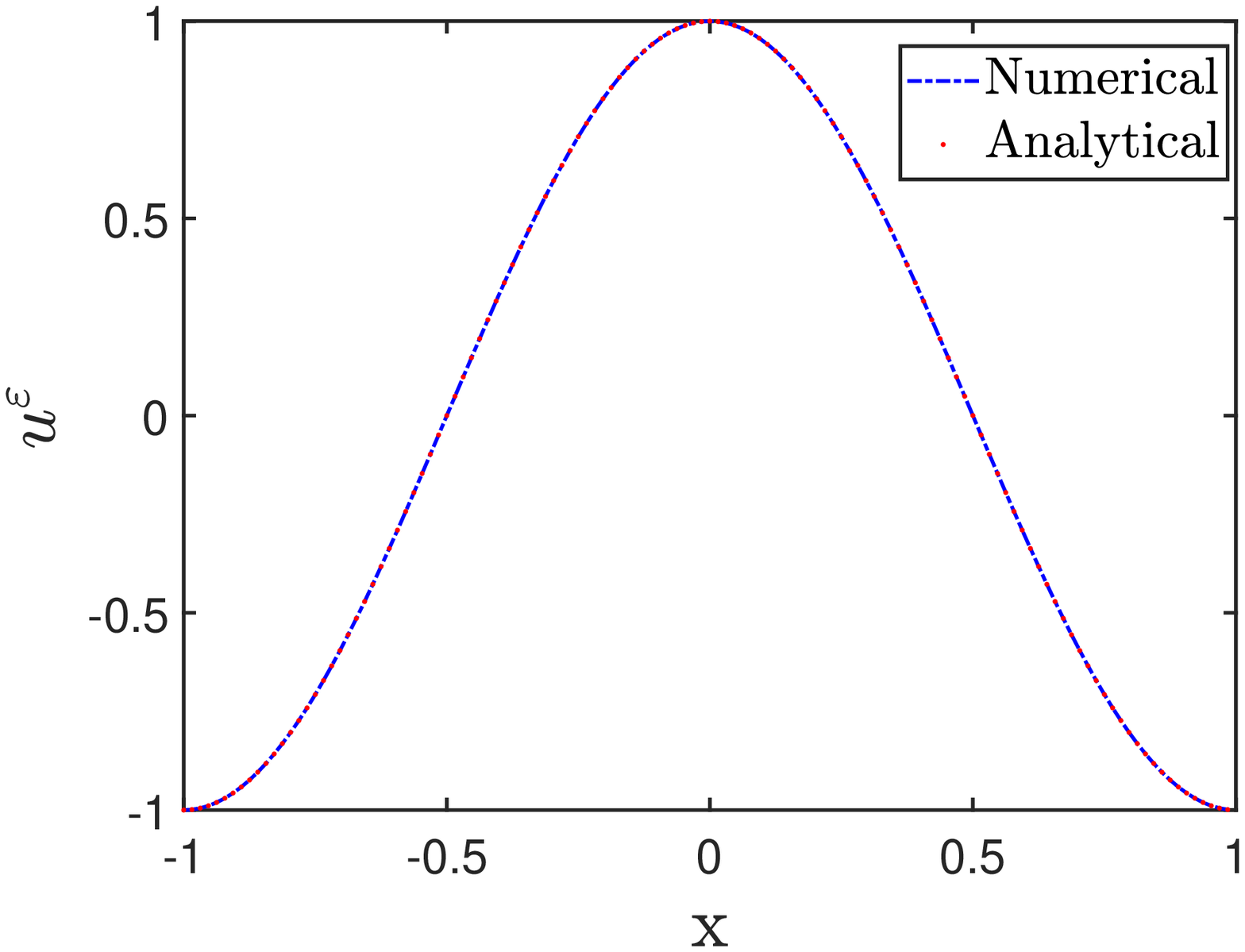}}~
\subfigure[t=1]{\includegraphics[width=.33\textwidth, height=0.30\textwidth]{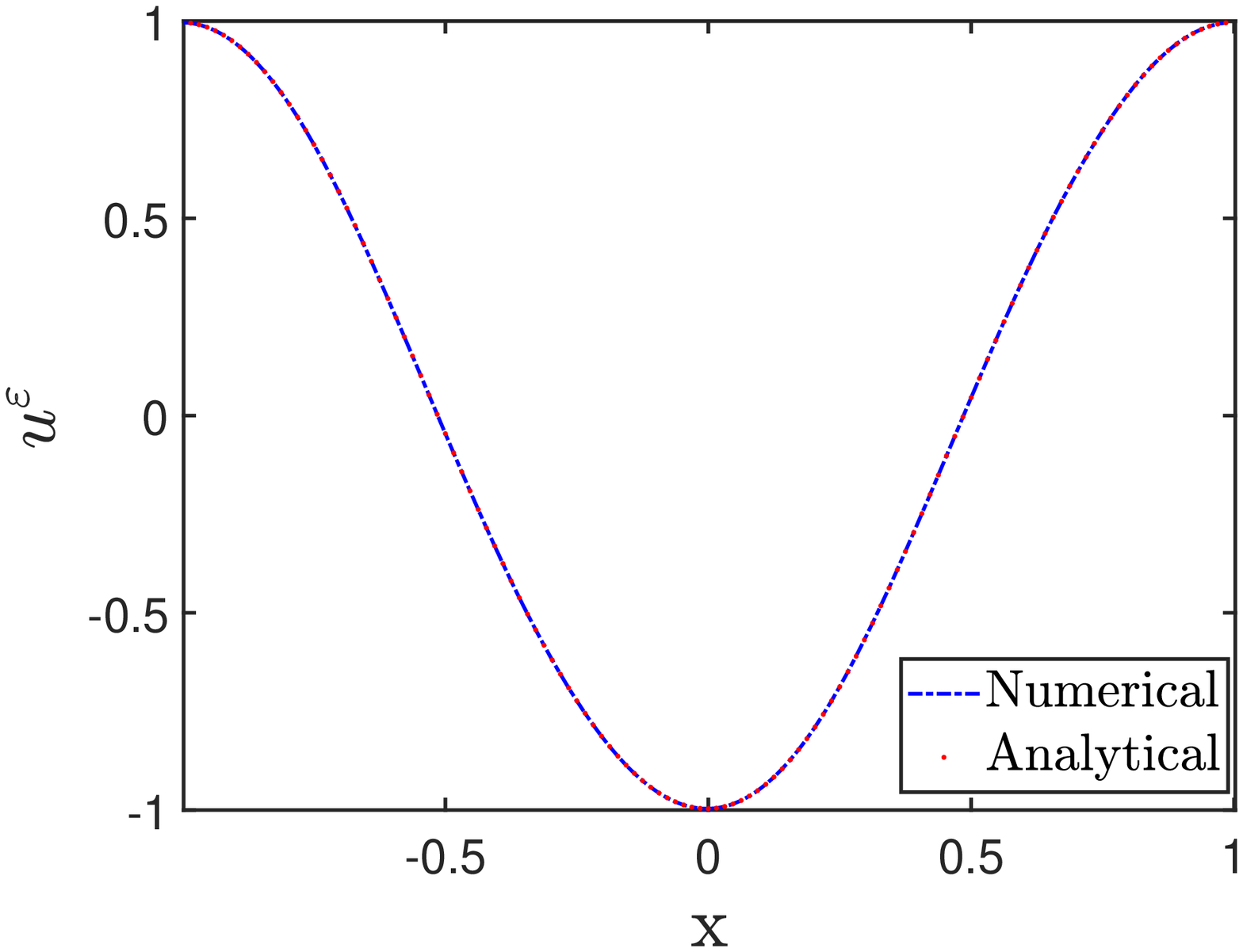}}~~
\subfigure[t=5]{\includegraphics[width=.31\textwidth, height=0.30\textwidth]{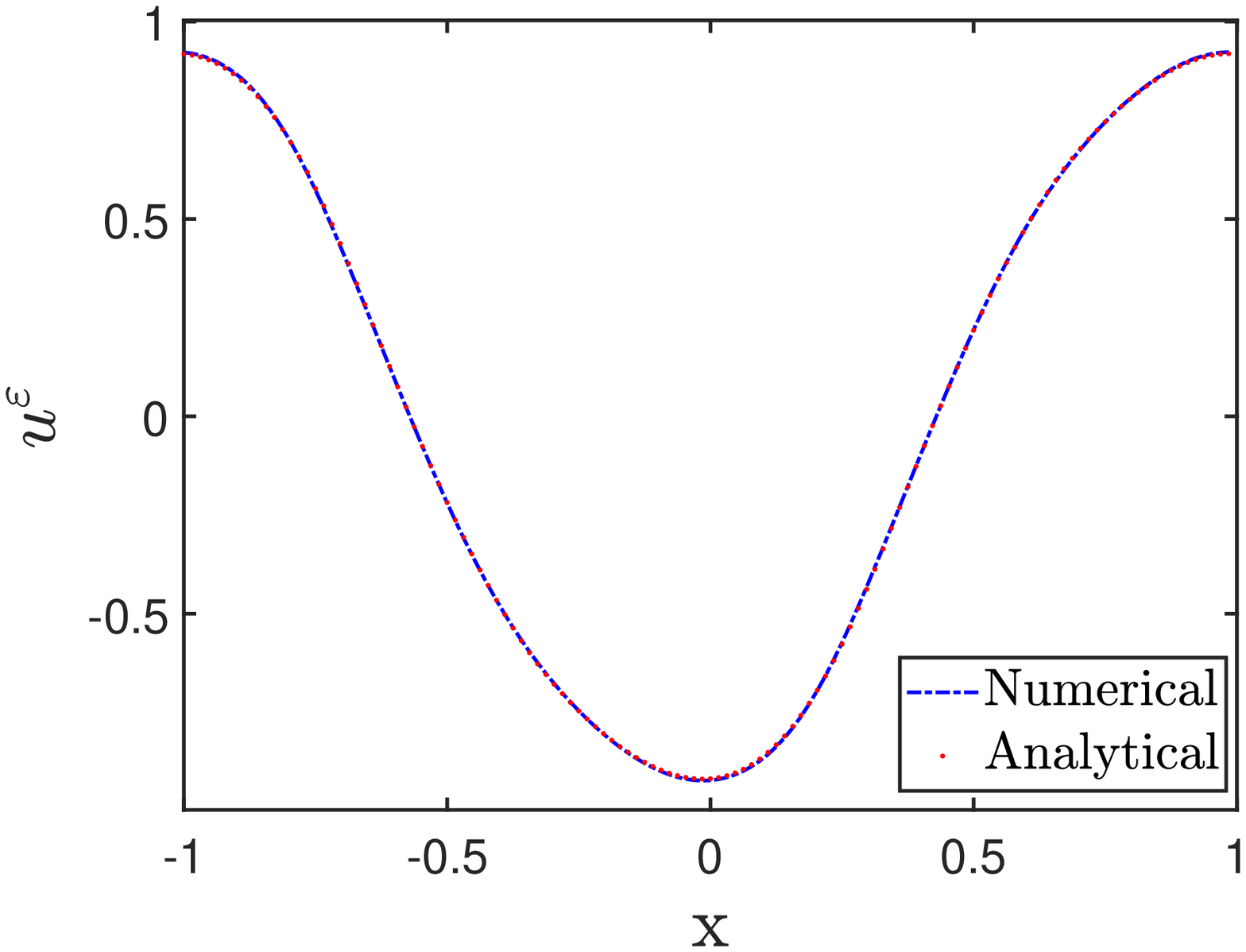}}
\caption{The numerical solution and analytical solution at three different times.}
\label{fig:case2_u}
\end{figure}
\end{center}

\section{Conclusions}\label{conclusion}
Two regularized energy-preserving finite difference methods: the CNFD (\ref{CNFD}) and the SIEFD (\ref{SIFD1}) are proposed and analyzed for the LogKGE (\ref{LogKGE}). Error estimates were rigorously estimated by utilizing the energy method, the cut-off technique and the inverse inequality, which showed that the FDTD methods at the order $O(h^{2}+\frac{\tau^{2}}{\varepsilon^{2}})$ in semi-$H^{1}$ norm. Besides, the error bounds were confirmed by some numerical results and the convergence order from the LogKGE to the RLogKGE is $O(\varepsilon)$. Based on the convergence, stability, energy conserving and computational results, we conclude that the CNFD and the SIEFD schemes are favorable for the LogKGE (\ref{LogKGE}).

\section*{Acknowledgments}
This work is supported by the National Natural Science Foundation of China (Grant No. 11971481,11901\\577), the Natural Science Foundation of Hunan (Grant No.S2017JJQNJJ0764,  S2020JJQNJJ1615), the Basic Research Foundation of National Numerical Wind Tunnel Project (No. NNW2018-ZT4A08). Research Fund of NUDT (Grand No. ZK17-03-27,ZK19-37), and the fund from Hunan Provincial Key Laboratory of Mathematical Modeling and Analysis in Engineering (Grand No.2018MMAEZD004).\section*{References}
%\bibliographystyle{IMANUM-BIB}
%\bibliography{IMANUM-refs}
\bibliographystyle{elsarticle-num}
\bibliography{logkg_energy}

\begin{thebibliography}{10}
\expandafter\ifx\csname url\endcsname\relax
  \def\url#1{\texttt{#1}}\fi
\expandafter\ifx\csname urlprefix\endcsname\relax\def\urlprefix{URL }\fi
\expandafter\ifx\csname href\endcsname\relax
  \def\href#1#2{#2} \def\path#1{#1}\fi

\bibitem{bartkowski2008one}
K.~Bartkowski, P.~G{\'o}rka, One-dimensional {Klein--Gordon} equation with
  logarithmic nonlinearities, Journal of Physics A: Mathematical and
  Theoretical 41~(35) (2008) 355201.

\bibitem{rosen1969dilatation}
G.~Rosen, Dilatation covariance and exact solutions in local relativistic field
  theories, Physical Review 183~(5) (1969) 1186.

\bibitem{buljan2003incoherent}
H.~Buljan, A.~{\v{S}}iber, M.~Solja{\v{c}}i{\'c}, T.~Schwartz, M.~Segev,
  D.~Christodoulides, Incoherent white light solitons in logarithmically
  saturable noninstantaneous nonlinear media, Physical Review E 68~(3) (2003)
  036607.

\bibitem{de2003logarithmic}
S.~De~Martino, M.~Falanga, C.~Godano, G.~Lauro, Logarithmic
  {Schr{\"o}dinger-like} equation as a model for magma transport, EPL
  (Europhysics Letters) 63~(3) (2003) 472.

\bibitem{hefter1985application}
E.~F. Hefter, Application of the nonlinear {Schr{\"o}dinger} equation with a
  logarithmic inhomogeneous term to nuclear physics, Physical Review A 32~(2)
  (1985) 1201.

\bibitem{enqvist1998q}
K.~Enqvist, J.~McDonald, Q-balls and baryogenesis in the {MSSM}, Physics
  Letters B 425~(3-4) (1998) 309--321.

\bibitem{linde1992strings}
A.~Linde, Strings, textures, inflation and spectrum bending, Physics Letters B
  284~(3-4) (1992) 215--222.

\bibitem{krolikowski2000unified}
W.~Kr{\'o}likowski, D.~Edmundson, O.~Bang, Unified model for partially coherent
  solitons in logarithmically nonlinear media, Physical Review E 61~(3) (2000)
  3122.

\bibitem{sakurai1967advanced}
J.~J. Sakurai, Advanced quantum mechanics, Pearson Education India, 1967.

\bibitem{masmoudi2002nonlinear}
N.~Masmoudi, K.~Nakanishi, From nonlinear {Klein-Gordon} equation to a system
  of coupled nonlinear {Schr{\"o}dinger} equations, Mathematische Annalen
  324~(2) (2002) 359--389.

\bibitem{machihara2001nonrelativistic}
S.~Machihara, The nonrelativistic limit of the nonlinear {Klein-Gordon}
  equation, Funkcialaj Ekvacioj Serio Internacia 44~(2) (2001) 243--252.

\bibitem{maslov1990pulsons}
E.~M. Maslov, Pulsons, bubbles, and the corresponding nonlinear wave equations
  in n+ 1 dimensions, Physics Letters A 151~(1-2) (1990) 47--51.

\bibitem{bialynicki1979gaussons}
I.~Bialynicki-Birula, J.~Mycielski, Gaussons: solitons of the logarithmic
  {Schr{\"o}dinger} equation, Physica Scripta 20~(3-4) (1979) 539.

\bibitem{koutvitsky2006instability}
V.~A. Koutvitsky, E.~M. Maslov, Instability of coherent states of a real scalar
  field, Journal of mathematical physics 47~(2) (2006) 022302.

\bibitem{gorka2009logarithmic}
P.~Gorka, Logarithmic {Klein-Gordon} equation, Acta Physica Polonica B 40
  (2009) 59--66.

\bibitem{wazwaz2016gaussian}
A.~M. Wazwaz, Gaussian solitary wave solutions for nonlinear evolution
  equations with logarithmic nonlinearities, Nonlinear Dynamics 83~(1-2) (2016)
  591--596.

\bibitem{makhankov1981interaction}
V.~Makhankov, I.~Bogolubsky, G.~Kummer, A.~Shvachka, Interaction of
  relativistic gaussons, Physica Scripta 23~(5A) (1981) 767.

\bibitem{bao2012analysis}
W.~Bao, X.~Dong, Analysis and comparison of numerical methods for the
  {Klein--Gordon} equation in the nonrelativistic limit regime, Numerische
  Mathematik 120~(2) (2012) 189--229.

\bibitem{chang1991conservative}
Q.~Chang, G.~Wang, B.~Guo, Conservative scheme for a model of nonlinear
  dispersive waves and its solitary waves induced by boundary motion, Journal
  of Computational Physics 93~(2) (1991) 360--375.

\bibitem{duncan1997sympletic}
D.~Duncan, Sympletic finite difference approximations of the nonlinear
  {Klein--Gordon} equation, SIAM Journal on Numerical Analysis 34~(5) (1997)
  1742--1760.

\bibitem{bao2019long}
W.~Bao, Y.~Feng, W.~Yi, Long time error analysis of finite difference time
  domain methods for the nonlinear {Klein-Gordon} equation with weak
  nonlinearity, arXiv preprint arXiv:1903.01133.

\bibitem{luo2017fourth}
Y.~Luo, X.~Li, C.~Guo, Fourth-order compact and energy conservative scheme for
  solving nonlinear {Klein-Gordon} equation, Numerical Methods for Partial
  Differential Equations 33~(4) (2017) 1283--1304.

\bibitem{zhang2005convergence}
L.~Zhang, Convergence of a conservative difference scheme for a class of
  {Klein--Gordon--Schr{\"o}dinger} equations in one space dimension, Applied
  Mathematics and Computation 163~(1) (2005) 343--355.

\bibitem{bao2013optimal}
W.~Bao, Y.~Cai, Optimal error estimates of finite difference methods for the
  {Gross-Pitaevskii} equation with angular momentum rotation, Mathematics of
  Computation 82~(281) (2013) 99--128.

\bibitem{yan2020regularized}
J.~Yan, H.~Zhang, X.~Qian, S.~Song, Regularized finite difference methods for
  the logarithmic {Klein-Gordon} equation, under review.

\bibitem{ji2019conservative}
B.~Ji, L.~Zhang, X.~Zhou, Conservative compact finite difference scheme for the
  {N-coupled nonlinear Klein--Gordon} equations, Numerical Methods for Partial
  Differential Equations 35~(3) (2019) 1056--1079.

\bibitem{bao2014uniformly}
W.~Bao, Y.~Cai, X.~Zhao, A uniformly accurate multiscale time integrator
  pseudospectral method for the {Klein--Gordon} equation in the nonrelativistic
  limit regime, SIAM Journal on Numerical Analysis 52~(5) (2014) 2488--2511.

\bibitem{cao1993fourier}
W.~Cao, B.~Guo, Fourier collocation method for solving nonlinear {Klein-Gordon}
  equation, Journal of Computational Physics 108~(2) (1993) 296--305.

\bibitem{bao2020uniform}
W.~Bao, Y.~Feng, C.~Su, Uniform error bounds of a time-splitting spectral
  method for the long-time dynamics of the nonlinear {Klein-Gordon} equation
  with weak nonlinearity, arXiv preprint arXiv:2001.10868.

\bibitem{bao2013exponential}
W.~Bao, X.~Dong, X.~Zhao, An exponential wave integrator sine pseudospectral
  method for the {Klein--Gordon--Zakharov} system, SIAM Journal on Scientific
  Computing 35~(6) (2013) A2903--A2927.

\bibitem{faou2014asymptotic}
E.~Faou, K.~Schratz, Asymptotic preserving schemes for the {Klein--Gordon}
  equation in the non-relativistic limit regime, Numerische Mathematik 126~(3)
  (2014) 441--469.

\bibitem{jimenez1990analysis}
S.~Jim{\'e}nez, L.~V{\'a}zquez, Analysis of four numerical schemes for a
  nonlinear {Klein-Gordon} equation, Applied Mathematics and Computation 35~(1)
  (1990) 61--94.

\bibitem{pascual1995numerical}
P.~Pascual, S.~Jim{\'e}nezz, L.~V{\'a}zquez, Numerical simulations of a
  nonlinear {Klein-Gordon} model. {Applications}, in: Third Granada Lectures in
  Computational Physics, Springer, 1995, pp. 211--270.

\bibitem{bao2019comparison}
W.~Bao, X.~Zhao, Comparison of numerical methods for the nonlinear
  {Klein-Gordon} equation in the nonrelativistic limit regime, Journal of
  Computational Physics 398 (2019) 108886.

\bibitem{feynman1965feynman}
R.~P. Feynman, R.~B. Leighton, M.~Sands, The feynman lectures on physics; vol.
  i, American Journal of Physics 33~(9) (1965) 750--752.

\bibitem{benjamin1972stability}
T.~B. Benjamin, The stability of solitary waves, Proceedings of the Royal
  Society of London. A. Mathematical and Physical Sciences 328~(1573) (1972)
  153--183.

\bibitem{bao2012uniform}
W.~Bao, Y.~Cai, Uniform error estimates of finite difference methods for the
  nonlinear {Schr{\"o}dinger} equation with wave operator, SIAM Journal on
  Numerical Analysis 50~(2) (2012) 492--521.

\bibitem{celledoni2012preserving}
E.~Celledoni, V.~Grimm, R.~I. McLachlan, D.~McLaren, D.~O¡¯Neale, B.~Owren,
  G.~Quispel, Preserving energy resp. dissipation in numerical {PDEs} using the
  {¡°Average Vector Field¡±} method, Journal of Computational Physics 231~(20)
  (2012) 6770--6789.

\bibitem{castillo2019conservative}
P.~Castillo, S.~G{\'o}mez, Conservative local discontinuous {Galerkin} method
  for the fractional {Klein-Gordon-Schr{\"o}dinger} system with generalized
  yukawa interaction, Numerical Algorithms (2019) 1--19.

\bibitem{cai2013local}
J.~Cai, Y.~Wang, H.~Liang, Local energy-preserving and momentum-preserving
  algorithms for coupled nonlinear {Schr{\"o}dinger} system, Journal of
  Computational Physics 239 (2013) 30--50.

\bibitem{brugnano2018class}
L.~Brugnano, C.~Zhang, D.~Li, A class of energy-conserving hamiltonian boundary
  value methods for nonlinear {Schr{\"o}dinger} equation with wave operator,
  Communications in Nonlinear Science and Numerical Simulation 60 (2018)
  33--49.

\bibitem{celledoni2012geometric}
E.~Celledoni, R.~I. McLachlan, B.~Owren, G.~R.~W. Quispel, Geometric properties
  of {Kahan's} method, Journal of Physics A: Mathematical and Theoretical
  46~(2) (2012) 025201.

\bibitem{bao2019error}
W.~Bao, R.~Carles, C.~Su, Q.~Tang, Error estimates of a regularized finite
  difference method for the logarithmic {Schr\"{o}dinger} equation, SIAM
  Journal on Numerical Analysis 57~(2) (2019) 657--680.

\bibitem{landes1980galerkin}
R.~Landes, On {Galerkin's} method in the existence theory of quasilinear
  elliptic equations, Journal of Functional Analysis 39~(2) (1980) 123--148.

\bibitem{akrivis1991fully}
G.~D. Akrivis, V.~A. Dougalis, O.~A. Karakashian, On fully discrete {Galerkin}
  methods of second-order temporal accuracy for the nonlinear {Schr{\"o}dinger}
  equation, Numerische Mathematik 59~(1) (1991) 31--53.

\bibitem{holte2009discrete}
J.~M. Holte, Discrete {Gronwall} lemma and applications, in: MAA-NCS meeting at
  the University of North Dakota, Vol.~24, 2009, pp. 1--7.

\end{thebibliography}

\end{document}